\DeclareMathAlphabet{\mathpzc}{OT1}{pzc}{m}{it}
\colorlet{darkblue}{blue!50!black}
\newtheorem{theorem}{Theorem}[section]
\newtheorem{proposition}[theorem]{Proposition}
\newtheorem{definition}[theorem]{Definition}
\newtheorem{remark}[theorem]{Remark}
\newtheorem{hypothesis}[theorem]{Hypothesis}
\let\originalleft\left
\let\originalright\right
\renewcommand{\left}{\mathopen{}\mathclose\bgroup\originalleft}
\renewcommand{\right}{\aftergroup\egroup\originalright}
\let\emptyset\varnothing
\renewcommand{\d}{\/\mathrm{d}\/}
\def\w{\textbf{W}^{\varepsilon}_{{\theta}^{\varepsilon}}}
\def\L{\mathbb{L}}
\def\A{\mathrm{A}}
\def\I{\mathrm{I}}
\def\C{\mathrm{C}}
\def\f{\boldsymbol{f}}
\def\B{\mathrm{B}}
\def\D{\mathrm{D}}
\def\y{\boldsymbol{y}}
\def\q{\boldsymbol{q}}
\def\x{\boldsymbol{x}}
\def\g{\boldsymbol{g}}
\def\z{\boldsymbol{z}}
\def\v{\boldsymbol{v}}
\def\V{\mathbb{v}}
\def\w{\boldsymbol{w}}
\def\W{\mathrm{W}}
\def\Q{\mathrm{Q}}
\def\N{\mathbb{N}}
\def\V{\mathbb{V}}
\def\wi{\widetilde}
\def\Q{\mathrm{Q}}
\def\u{\mathrm{U}}
\def\P{\mathrm{P}}
\def\u{\boldsymbol{u}}
\def\H{\mathbb{H}}
\newcommand{\eps}{\varepsilon}
\newcommand{\R}{\mathbb{R}}
\renewcommand{\d}{\/\mathrm{d}\/}
\newcommand{\Addresses}{{% additional braces for segregating \footnotesize
		\footnote{
			%	\footnotesize
			
			\noindent \textsuperscript{1,2,3}Department of Mathematics, Indian Institute of Technology Roorkee-IIT Roorkee,
			Haridwar Highway, Roorkee, Uttarakhand 247667, INDIA.\par\nopagebreak
			\noindent  \textit{e-mail:} \texttt{Manil T. Mohan: maniltmohan@ma.iitr.ac.in, maniltmohan@gmail.com.}
			
			\textit{e-mail:} \texttt{Kush Kinra: kkinra@ma.iitr.ac.in.}
			
			\textit{e-mail:} \texttt{Sagar Gautam: sagar\_g@ma.iitr.ac.in.}
			
			\noindent \textsuperscript{*}Corresponding author.
			
			\textit{Key words:}  Convective Brinkman-Forchheimer extended Darcy equations, torus, monotone operators, stabilization, feedback control.
			
			Mathematics Subject Classification (2020): Primary 49J20, 49N35, 93D15; Secondary 35Q35, 76D03.

}}}
\begin{document}
	%	\linenumbers
	
	\title[Nonlinear feedback controllers for $\mathrm{CBFeD}$ equations]{Feedback stabilization of Convective Brinkman-Forchheimer Extended Darcy equations
		\Addresses}
	\author[S. Gautam, K. Kinra and M. T. Mohan]
	{Sagar Gautam\textsuperscript{1}, Kush Kinra\textsuperscript{2} and Manil T. Mohan\textsuperscript{3*}}
	
	\maketitle
	
	\begin{abstract}
	In this article, the following controlled convective Brinkman-Forchheimer extended Darcy (CBFeD) system is considered in a $d$-dimensional torus:
	\[
	\frac{\partial\boldsymbol{y}}{\partial t}-\mu \Delta\boldsymbol{y}+(\boldsymbol{y}\cdot\nabla)\boldsymbol{y}+\alpha\y+\beta\vert \boldsymbol{y}\vert ^{r-1}\boldsymbol{y}+\gamma\vert \boldsymbol{y}\vert ^{q-1}\boldsymbol{y}+\nabla p=\boldsymbol{g}+\u,\ \nabla\cdot\boldsymbol{y}=0, 
	\]
	 where $d\in\{2,3\}$, $\mu,\alpha,\beta>0$, $\gamma\in\mathbb{R}$, $r,q\in[1,\infty)$ with $r>q\geq 1$. We prove the exponential stabilization of CBFeD system by finite- and infinite-dimensional feedback controllers. The solvability of the controlled problem is achieved by using  the abstract theory of $m$-accretive operators and density arguments. As an application of the above solvability result, by using infinite-dimensional feedback controllers, we demonstrate exponential  stability results such that the solution preserves an invariance condition for a given closed and convex set.  By utilizing the unique continuation property of controllability for finite-dimensional systems, we construct a finite-dimensional feedback controller which   exponentially stabilizes CBFeD system locally, where the control is localized in a smaller subdomain. Furthermore, we establish the local exponential stability of CBFeD system  via proportional controllers. 
	\end{abstract}

	\section{Introduction} 

The mathematical modeling and analysis of fluid flows through porous media is an active research field, as it covers some challenging problems in engineering and applied sciences. A porous medium or a porous material is defined as a solid (often called a matrix) permeated by an interconnected network of pores (voids) filled with a fluid (e.g., air or water). We consider a porous medium that is fully saturated with a fluid (cf. \cite{TCY}). Darcy's law is the subject of a significant amount of research in the porous media.  It is an empirical law which describes the linear relationship between fluid flow rate and pressure gradient. Mathematically, it says that  $\y_f=-\frac{k}{\nu}\nabla p,$ where $\y_f$ is the Darcy velocity, $k$ is the permeability of the fluid, $\nu>0$ is the dynamic viscosity and $p$ is the pressure in the fluid flow (cf. \cite{MTT}). Darcy's law is valid for the fluid flows with low Reynolds number (for example, oil reservoir and ground water flow). Nevertheless, nonlinearities appear with increasing Reynolds numbers, making it unreliable to describe fluid flow through porous media. For example fluid flow with high velocity, molecular and ionic effect, real fluids having non-Newtonian behavior (such as polymers and suspensions), compressible flow (such as high pressure gas reservoirs), etc.  Forchheimer proposed a modification for the fluid flows in porous media with high Reynolds number (for example, flow through packed beds) which suggests that Darcy's law still applicable but with an additional nonlinear term in order to account the pressure gradient. The Darcy's Forchheimer law describes the relationship between the pressure gradient and flow rate in a nonlinear manner at sufficiently high flow rates, that is, $\nabla p=-\frac{\nu}{k}\y_f-\gamma\rho_f|\y_f|\y_f,$ where $\y_f$ is the Forchheimer velocity, $\gamma>0$ is the Forchheimer coefficient and $\rho_f$ is the density (cf. \cite{MTT}). However, the quadratic nonlinearity in the Forchheimer equation can be further generalized to incorporate some additional nonlinear terms from the mathematical point of view.  Such a generalization, which takes into account the effects of both viscous resistance and inertial resistance, is called Brinkman-Forchheimer equations. These equations are used in several real world phenomena, for example, theory of non-Newtonian fluids (cf. \cite{avs}) and theory of tidal dynamics (cf. \cite{rgg,all,MT6}), etc. By adding the convective term $(\y\cdot\nabla)\y$, coming from the Navier-Stokes equations (NSE) to the Brinkman-Forchheimer equations, we obtain the convective-Brinkman Forchheimer equations (CBF). In this work, we consider the further generalization of CBF equations by taking into account the pumping term which also having similar nonlinearity as in the CBF equations but with opposite sign. 

\subsection{The Model}
For $\mathrm{L}>0$, let us consider a $d$-dimensional torus $\mathbb{T}^d=\left(\frac{\R}{\mathrm{L}\mathbb{Z}}\right)^d$ ($d=2,3$), which is identified with the $d$-dimensional cube $[0,\mathrm{L}]^d$ (see \cite[Chapter 3]{cf}). In this article, we discuss the stabilization of the mathematical generalizaion of CBF equations, known as \emph{convective Brinkman-Forchheimer extended Darcy (CBFeD) equations} (cf. \cite{MTT}) on $\mathbb{T}^d$ through feedback controllers. The CBFeD equations describe the motion of incompressible fluid flows in a saturated porous medium (\cite{MTT}). In this work, we consider the following controlled CBFeD equations on $\mathbb{T}^d$:
\begin{equation}\label{1}
	\left\{
	\begin{aligned}
		\frac{\partial \y}{\partial t}-\mu \Delta\y+(\y\cdot\nabla)\y+\alpha\y+\beta|\y|^{r-1}\y +\gamma|\y|^{q-1}\y+\nabla p&=\g+\u, \ &&\text{ in } \ \mathbb{T}^d\times(0,\infty), \\ \nabla\cdot\y&=0, \ &&\text{ in } \ \mathbb{T}^d\times(0,\infty), \\
		\y(0)&=\y_0 \ &&\text{ in } \ \mathbb{T}^d,
	\end{aligned}
	\right.
\end{equation}	
where $\y(x,t):\mathbb{T}^d\times[0,\infty)\to\R^d$, $p(x,t):\mathbb{T}^d\times[0,\infty)\to\R$ and $\g(x,t):\mathbb{T}^d\times[0,\infty)\to\R^d$ represent the velocity field, pressure field and external forcing, respectively, at time $t$ and position $x$. The distributed control acting on the system is denoted by $\u(\cdot)$. Moreover, $\y(\cdot,\cdot)$, $p(\cdot,\cdot)$ and $\g(\cdot,\cdot)$ satisfy the following periodic conditions:
\begin{align}\label{2}
	\y(x+\mathrm{L}e_{i},\cdot) = \y(x,\cdot), \ p(x+\mathrm{L}e_{i},\cdot) = p(x,\cdot) \ \text{and} \ \g(x+\mathrm{L}e_{i},\cdot) = \g(x,\cdot),
\end{align}
for every $x\in\R^{d}$ and $i\in\{1,\ldots,d\},$ where $\{e_{1},\dots,e_{d}\}$ is the canonical basis of $\R^{d}.$ The constant $\mu>0$ denotes the \emph{Brinkman coefficient} (effective viscosity). The constants $\alpha, \beta>0$ are due to Darcy-Forchheimer law and we call $\alpha$ and $\beta$ as \emph{Darcy} (permeability of the porous medium) and \emph{Forchheimer} (proportional to the porosity of the material) coefficients, respectively. The nonlinear term $\gamma|\y|^{q-1}\y$ appearing in \eqref{1} serves as damping for $\gamma>0$, and as pumping for $\gamma<0$. The parameters $r,q\in[1,\infty)$ with $r>q$ are known as absorption and pumping exponents, respectively. The exponent $r=3$ is known as critical exponent. Note that the system \eqref{1} has same scaling as NSE (see for instance \cite{KWH}) only when $\alpha=\gamma=0$ and $r=3$. It can be easily seen that, for $\alpha=\beta=\gamma=0$, the system \eqref{1} reduces to the classical NSE. Furthermore, we refer the case $r<3$ as \emph{subcritical} and $r>3$ as \emph{supercritical} (or fast growing nonlinearities). %As we have already discussed that the system \eqref{1} with $\alpha=\beta=\gamma=0$ forms the classical NSE which is one of the fundamental equation of non-Newtonian fluid flows. 
We refer to \cite{lim2,lim1} for a discussion on the formulation, validity, and limitations of Brinkman-Forchheimer extended Darcy's equations.
%The damping comes from the resistance to the motion of the flow. It describes various physical phenomena such as porous media flow, drag or friction effects, and some dissipative mechanisms (cf. \cite{dbbd,dbbd1,vggt,marta}). 

\subsection{Stabilization}
In the literature, the mathematical theory of stabilization of equilibrium solution to Newtonian fluid flows is extensively used as a principal tool to eliminate or attenuate the turbulence (cf. \cite{VB6}). %One of the central issues in the control theory is to determine whether the controlled system can be stabilized or not.  
	Let us consider the following stationary system corresponding to the CBFeD system \eqref{1}:
\begin{equation}\label{stability}
	\left\{
	\begin{aligned}
-\mu \Delta\y_e+(\y_e\cdot\nabla)\y_e+\alpha\y_e+\beta|\y_e|^{r-1}\y_e +\gamma|\y_e|^{q-1}\y+\nabla p_e&=\g_e,  &\text{ in } \ \mathbb{T}^d, \\ \nabla\cdot\y_e&=0,  &\text{ in } \ \mathbb{T}^d.
	\end{aligned}
	\right.
\end{equation}	
The solvability of the system \eqref{stability} has been discussed in  Appendix \ref{SP}. In this work, we are interested in the exponential stabilization of the stationary (or equilibrium) solution $\y_e$ to the system \eqref{1} by using infinite and finite-dimensional feedback controllers such that $\y(t)-\y_e$ satisfies some invariance condition for all $t\geq0$. We say that the controller $\u(\cdot)$ exponentially stabilizes the stationary solution $\y_e$ if there is a ``solution'' $\y(\cdot)$ of the system \eqref{1} such that it satisfies
\begin{align*}
	\lim\limits_{t\to\infty} e^{\delta t}\|\y(t)-\y_0\|_{\L^2({\mathbb{T}^d})}=0, \text{ for some } \delta>0.
\end{align*}
By the substitution $\y(\cdot)$ to $\y(\cdot)+\y_e$, one can reduce the problem of stabilization of $\y_e$ into the null stabilization  of the following system:
\begin{equation}\label{expstab}
	\left\{
	\begin{aligned}
		\frac{\partial \y}{\partial t}-\mu\Delta\y +((\y+\y_e)\cdot\nabla)(\y+\y_e)-(\y_e\cdot\nabla)\y_e+\alpha\y&+\beta(|\y+\y_e|^{r-1}(\y+\y_e)-|\y_e|^{r-1}\y_e) \\+\gamma(|\y+\y_e|^{q-1}(\y+\y_e)-|\y_e|^{q-1}\y_e)+\nabla(p-p_e)&=\g-\g_e+\u, \   \text{ in } \ \mathbb{T}^d\times(0,\infty), \\ \nabla\cdot\y&=0, \  \hspace{16mm}\text{ in }\ \mathbb{T}^d\times(0,\infty).
	\end{aligned}
	\right.
\end{equation}	
Keeping these goals in mind, we take control $\u(\cdot)\in\Xi+\partial\I_{\mathcal{K}}(\cdot)$ and consider the following inclusion problem associated with \eqref{expstab}:
	\begin{equation}\label{a}
		\left\{
		\begin{aligned}
				\frac{\partial \y}{\partial t}-\mu\Delta\y +((\y+\y_e)\cdot\nabla)(\y+\y_e)-(\y_e\cdot\nabla)\y_e+\alpha\y+&\beta(|\y+\y_e|^{r-1}(\y+\y_e)-|\y_e|^{r-1}\y_e) \\+\gamma(|\y+\y_e|^{q-1}(\y+\y_e)-|\y_e|^{q-1}\y_e)+\nabla (p-p_e)+\Xi\y+\partial\I_{\mathcal{K}}(\y)&\ni\g-\g_e, \ \text{ in } \ \mathbb{T}^d\times(0,\infty), \\ \nabla\cdot\y&=0, \ \hspace{8.5mm}\text{ in } \ \mathbb{T}^d\times(0,\infty),\\
			\y(0)&=\y_0, \hspace{8.1mm}\text{ in } \ \mathbb{T}^{d},
		\end{aligned}
		\right.
	\end{equation}
where $\I_{\mathcal{K}}$, the indicator function of a closed and convex set $\mathcal{K}$ satisfying  Hypothesis \ref{AssupK} below, is a lower semicontinuoius proper convex function. The subdifferential of the indicator function, that is, $\partial\I_{\mathcal{K}}(\cdot)$ is a maximal monotone operator and it is given by \eqref{subb}. Also, $\Xi:\mathbb{L}^2(\mathbb{T}^d)\to\mathbb{L}^2(\mathbb{T}^d)$ is a continuous linear operator, and the choice of $\Xi$ will be made precise later. The subdifferential of a lower semicontinuous proper convex function is a maximal monotone susbet of $\mathbb{L}^2(\mathbb{T}^d)\times\mathbb{L}^2(\mathbb{T}^d)$ and it connects the bridge between the theory of nonlinear maximal monotone operators and convex analysis. We refer to \cite{VB1,VB2}, for basic properties and concepts pertaining subdifferentials.

\subsection{Literature survey}
 Due to Leary and Hopf (cf. \cite{Hopf,Leray}), the existence of at least one weak solution of 3D NSE is known. But the uniqueness is still a challenging open problem for the mathematical community. Therefore,  several mathematicians came up with various modifications of 3D NSE (cf. \cite{SNA,SNA1,ZCQJ,KT2,ZZXW}, etc.). The authors in \cite{SNA,SNA1} introduced the NSE modified with an absorption term $\beta|\y|^{r-1}\y$ with $r>0$ and proved the existence of weak solution for any $d\geq2$ and uniqueness for $d=2$. In the literature, modification of the classical NSE with the damping term $\alpha\y+\beta|\y|^{r-1}\y$ is known as the \emph{convective Brinkman-Forchheimer equations} (cf. \cite{sKM,KWH,MT1}, etc.). In \cite{MTT}, the authors considered the classical 3D NSE with damping $\beta|\y|^{r-1}\y$ as well as pumping $\gamma|\y|^{q-1}\y$ (without linear damping $\alpha\y$) and established the existence of weak solutions for $r>q$ and uniqueness for $r>3$. Moreover, they demonstrated the global existence of a unique strong solution for $r>3$ with the initial data in $\mathbb{H}^1$. Similar to 3D NSE, the existence of a unique global (in time) weak solution of 3D CBFeD with $r\in[1,3)$ (for any $\beta,\mu>0$) and $r=3$ (for $2\beta\mu<1$) is also an open problem.

 The monotone operators  are a class of nonlinear operators which provide a broad framework for the analysis of infinite-dimensional problems. Of special interest are the so-called \emph{maximal monotone operators} (or \emph{$m$-accretive operators} on Hilbert spaces identified with its own dual), which exhibit remarkable surjectivity properties. Further, for inclusion problems, once the operator becomes maximal, then one can use the Yosida approximation scheme, which is an important regularization technique for solving evolution systems (see \cite{VB1,OPHB}, etc.).

Control theory plays a cruiciual role in  stabilization of a differential system around a specific orbit using a feedback control. One of the main problems in controllability is to find an appropriate control which steers from an arbitrary initial state to an arbitrary final (desired) state. Even though the controllabilty problem is quite satisfactory from the mathematical point of view, most of the control problems in applications require the control of closed-loop form, as they are stable under perturbations (cf. \cite[Part 3]{coron}). By a closed-loop control, we mean at each instant, the control is a function of the state. In the stability problems, our main concern is to show that the solution to a given evolution equation approaches to an equilibrium solution as the time approaches to infinity (cf. \cite{VG1,VB7,VGW}, etc.). One of the main results obtained in this direction is that the equilibrium solutions to NSE  can be exponentially stabilized by finite- or infinite-dimensional feedback controllers with support in the interior of the domain or on the boundary (see \cite{VB14,VB8,VB6}, etc.). The author in \cite{AIL2} established the global stabilization of 2D and 3D NSE with weak intial data by using a feedback control distributed on the whole space. Further they discussed the local stability results also by using a finite-dimensional feedback control distributed on the subdomain (see \cite{AIL,AIL4}). The local exponential stabilization for incompressible 2D NSE with Navier slip boundary conditions was established in \cite{CL}. They constructed a feedback control to stabilize the linearized system about an equilibrium point which helps to stabilize nonlinear system locally by using the solution of the Lyapunov equation. The similar problem in the context of 3D NSE was studied by V. Barbu in \cite{VB10}, where  a spectral decomposition method had been used. The feedback stabilization of 2D and 3D NSE in bounded domains by using a state decomposition technique  is discussed in \cite{VBT}. A local exponential stabilization for 3D NSE via finite-dimensional feedback controls by using observability inequality and the regularizing property for the corresponding linearized equation is examined in  \cite{VSSRA}. The authors in \cite{Mb2} provides a  general framework for the stabilization of a class of nonlinear equations by using finite-dimensional feedback controllers and its applications to stabilize the equilibrium solution of 3D NSE. 

The author in \cite{Mb} proved an exponential stabilizability  result for 2D and 3D NSE in bounded domain, where they construct the feedback controller by using Lyapunov functional method. The stability of 2D and 3D NSE with slip-curl boundary conditions is discussed in \cite{VBL} by using proportional controllers which is supported locally. We refer to \cite{av2} and references therein for the stabilization in unbounded domains for 2D NSE, where the authors use the technique of impulse control. For an extensive study on boundary stabilization, we refer the readers to the articles \cite{VBLT,VBLT1,VB11,NR,Jpr,Sr} and  references therein. A general approach for studying controllability of PDEs through finite-dimensional projections can be found in the articles \cite{Agh,Agh1}. We refer to \cite{BA, LPT1}, etc. for further reading.

\subsection{Difficulties and approaches}
The main advantage for considering the CBFeD system \eqref{1} in a $d$-dimensional torus is as follows: In the torus $\mathbb{T}^d$, the Helmholtz-Hodge projection $\mathcal{P}$ and $-\Delta$ commutes (cf. \cite[Theorem 2.22]{JCR}). So, the equality 
\begin{align}\label{3}
	&\int_{\mathbb{T}^d}(-\Delta \boldsymbol{y}(x))\cdot|\boldsymbol{y}(x)|^{r-1}\boldsymbol{y}(x)\d x=\int_{\mathbb{T}^d}|\nabla \boldsymbol{y}(x)|^2|\boldsymbol{y}(x)|^{r-1}\d x +4\left[\frac{r-1}{(r+1)^2}\right]\int_{\mathbb{T}^d}|\nabla|\boldsymbol{y}(x)|^{\frac{r+1}{2}}|^2\d x,
\end{align}
is quite useful in obtaining regularity results. It is also noticed in the literature that the above equality may not be useful in  domains other than the whole domain or a $d$-dimensional torus (see \cite{KT2,MT1},  etc. for a detailed discussion). 

In order to use the abstract theory of $m$-accretive operators, the authors  in \cite{VBSS,sKM,AIL2, AIL}, etc. use the perturbation result \cite[Chapter II, Theorem 3.5]{VB1}. However, for $r>3,$ due to the presence of fast-growing nonlinearities in \eqref{1}, we are able to prove $m$-accretivity without using the perturbation result (see Step IV, Proposition \ref{prop3.1}). In our previous work (\cite{sKM}), we established the well-posedness of the CBF model with strong initial data (for instance, $\y_0\in\H^2(\mathbb{T}^d)$) by using the abstract theory of $m$-accretive operators and we discussed its applications in control problems. In the present work, we prove the existence and uniqueness of the CBFeD systems \eqref{1} with weak initial data (for instance, $\y_0\in\L^2(\mathbb{T}^d)$).

For the stability of the stationary solution corresponding to the system \eqref{1p44}, one needs the existence of a global solution for the initial data $\y_0\in\H.$ Due to the presence of fast growing nonlinearities $\beta|\y|^{r-1}\y$ in \eqref{a}, we are able to prove the global existence of a unique weak solution for $r\geq3$ for $d=3$ and $d=r=3$ with $2\beta\mu>1$ (Sec. \ref{thm}). Obtaining the global existence of a unique weak solution for the multivalued system \eqref{1p44} is a challenging task, as we are dealing with a subdifferential operator. The main difficulty arises when we take the inner product with the subdifferential operator $\partial\I_{\mathcal{K}}(\cdot)$. The authors in \cite{sKM,AIL2,AIL,AIL4} addressed this difficulty by using more regularity on the initial data and exploiting the regularity of strong solutions. 

In the literature of CBF equations in bounded domains, the linear damping $\alpha\y$ poses no additional difficulty and therefore it is discarded from the analysis by taking $\alpha=0$.   In \cite{MTT}, the authors ignored the linear damping term  and considered a generalization of the Darcy-Forchheimer law by taking into account the pumping term $\gamma|\y|^{q-1}\y$ with $\gamma<0$ only. The linear damping term in our study cannot be avoided and must remain as it is due to a technical issue with the solvability of the stationary system corresponding to the CBFeD system \eqref{1p44} (non-zero average condition of velocity field, see Appendix \ref{SP}).  Let us also point out that we cannot utilize Poincar\'e inequality in this case since the average of $\y$ might not be zero. Instead, we must deal with the full $\H^1$-norm. Moreover, $0$ is an eigenvalue of the Stokes operator $\A=-\mathcal{P}\Delta$ on the torus $\mathbb{T}^d$ (see Subsection \ref{linope}) and hence $\A$ is not an invertible operator. Therefore we have to work with the operator $\I+\A$ rather than $\A$. 

The author in \cite{AIL2} uses a projection operator $\mathrm{P}_{\mathcal{K}}$ to handle the convective nonlinearity $(\y\cdot\nabla)\y$ (see \cite[Theorem 2.1]{AIL2}). In the present article, the fast-growing nonlinearities $\beta|\y|^{r-1}\y$ help us to avoid this assumption, and one can tackle it by using an integration trick (see the proof of the main result in Sec. \ref{thm}). Let us now explain an additional difficulty arising in the proof of the main result. Since the initial data $\y_0\in\H$ only, one cannot obtain a convergence of the approximation of the subdifferential operator $(\partial\I_{\mathcal{K}})(\y(\cdot))$ by using the standard energy estimates. Making use of the abstract result \cite[Theorem 1.6]{VB2} (in particular \eqref{4p1.1} below), we overcome this difficulty and obtain the convergence of the subdifferential operator for all $t \in [\tau, T],$  in $\H$, for any $\tau > 0$  (see more regularity estimates in Sec. \ref{thm}). Note that $\tau>0$ plays a crucial role in the existence of  weak solutions. The strong convergence \eqref{stc1} is helpful in ensuring the behavior of the solution $\y(\tau)$ as $\tau\to0^+$ (see Def. \ref{wdefn}).

\subsection{Novelties} In this work, we are interested in a problem of finding those feedback controllers of the form $\Psi(\y)$, where $\Psi$ is a multivalued map such that the solution $\y(t)$ of \eqref{a} reaches to the equilibrium solution $\y_e$  as $t\to\infty$ and it remains inside a closed and convex set $\mathcal{K}$. That is, the solution of the system preserves some invariance condition also. These are highly applicable real life problems where one wants to find those feedback controllers which preserve some specific properties of the state under some controlled flow. One such example is that the enstrophy $\int_{\mathbb{T}^d}|\nabla\times\y(x)|^2\d x$  in a specific spatial region has to be preserved within a bound (see \cite{VBSS} for NSE). These feedback controllers help us to find such a bound on the evolution in time for the controlled system (\cite{VBSS,sKM}). Furthermore, we  also look for the feedback stabilization of CBFeD system where the control takes values in a finite-dimensional space and localized in a smaller region. 

Most of the feedback stabilization problems for NSE are considered for bounded domains, with control distributed over the entire domain or a subset of the domain. The authors in \cite{VBL} studied the feedback stabilization for NSE by using proportional controllers with support in an open subset of a bounded domain. Following this, we prove the exponential stabilization for the CBFeD system \eqref{1}, with control situated on $\Omega\subset\mathbb{T}^d$ such that $\mathbb{T}^d\setminus\overline\Omega$ is sufficiently thin (see \eqref{prop1.1} in Appendix \ref{PC}). We use the classical Rayleigh-Faber-Krahn inequality (see \cite[Section 5.4]{VB7}) to achieve this, which gives a lower bound for the first eigenvalue of the operator $\mu\A+\alpha\I$ defined on $\mathbb{T}^d\setminus\overline\Omega$, which is strictly greater than $0$. Note that the Stokes operator $\A$ has $0$ as an eigenvalue in our setting, so we are not able to use the Rayleigh-Faber-Krahn inequality (see \eqref{RFK} in Appendix \ref{PC}) for this operator.

\subsection{Organization of the paper} 
The rest of the paper is organized as follows: In the following section, we introduce a functional setting for  the theory of CBFeD model and recall some definitions and properties of linear, bilinear and nonlinear operators as well as some well-known facts. In Sec. \ref{sec1}, we setup the abstract formulation \eqref{1p44} of \eqref{1} and state a result concerning the solvability of CBFeD equations (Theorem \ref{mainT}). We provide the proof of Theorem \ref{mainT} in Sec. \ref{thm} by using the abstract theory of $m$-accretive operators and density arguments. The Minty-Browder technique is exploited to pass the weak limit. Sec. \ref{stb} is devoted to the main results on the stabilization of CBFeD equations \eqref{1p44} by using infinite- and finite-dimensional feedback controllers (Theorem \ref{thm3} and \ref{findim}).  In Sec. \ref{sec2}, we discuss the quasi-$m$-accretivity of the single-valued operator $\mathcal{M}(\cdot)$ (Proposition \ref{prop3.1}) as well the quasi-$m$-accretivity of the multivalued operator $\mathfrak{G}(\cdot)$ (Proposition \ref{prop3.3}). The existence and uniqueness of solutions for the stationary CBFeD system are discussed in Appendix \ref{SP}. Exponential stabilization via proportional controllers is examined in Appendix \ref{PC}. 

\section{Functional settings and operators}\label{fn}\setcounter{equation}{0}
In this section, we define some necessary function spaces which are frequently used in the rest of the paper, and linear and nonlinear operators which help us to obtain the abstract formulation of the inclusion system \eqref{a}. For our analysis, we adapt the functional framework from the work \cite{JCR1}. 

\subsection{Function spaces} 
Let $\C_{\mathrm{p}}^{\infty}(\mathbb{T}^d;\R^d)$ denote the space of all infinitely differentiable  functions $\y$ satisfying periodic boundary conditions $\y(x+\mathrm{L}e_{i},\cdot) = \y(x,\cdot)$, for $x\in \R^d$. \emph{We are not assuming the zero mean condition for the velocity field unlike the case of NSE, since the absorption term $\beta|\y|^{r-1}\y$ does not preserve this property (see \cite{MTT}). Therefore, we cannot use the well-known Poincar\'e inequality and we have to deal with the  full $\H^1$-norm.} The Sobolev space  $\H_{\mathrm{p}}^s(\mathbb{T}^d):=\mathrm{H}_{\mathrm{p}}^s(\mathbb{T}^d;\mathbb{R}^d)$ is the completion of $\C_{\mathrm{p}}^{\infty}(\mathbb{T}^d;\R^d)$  with respect to the $\H^s$-norm and the norm on the space $\H_{\mathrm{p}}^s(\mathbb{T}^d)$ is given by $$\|\y\|_{{\H}^s_{\mathrm{p}}}:=\left(\sum_{0\leq|\boldsymbol\alpha|\leq s}\|\D^{\boldsymbol\alpha}\y\|_{\mathbb{L}^2(\mathbb{T}^d)}^2\right)^{1/2}.$$ 	
It is known from \cite{JCR1} that the Sobolev space of periodic functions $\H_{\mathrm{p}}^s(\mathbb{T}^d)$ is same as the space
   $$\H_{\mathrm{f}}^s(\mathbb{T}^d)=\left\{\y:\y=\sum_{k\in\mathbb{Z}^d}\y_{k}\mathrm{e}^{2\pi i k\cdot x /  \mathrm{L}},\ \overline{\y}_{k}=\y_{-k}, \  \|\y\|_{{\H}^s_\mathrm{f}}:=\left(\sum_{k\in\mathbb{Z}^d}(1+|k|^{2s})|\y_{k}|^2\right)^{1/2}<\infty\right\}.$$ We infer from \cite[Proposition 5.38]{JCR1} that the norms $\|\cdot\|_{{\H}^s_{\mathrm{p}}}$ and $\|\cdot\|_{{\H}^s_f}$ are equivalent. Let us define 
\begin{align*} 
	\mathcal{V}:=\{\y\in\C_{\mathrm{p}}^{\infty}(\mathbb{T}^d;\R^d):\nabla\cdot\y=0\}.
\end{align*}
We define the spaces $\H$ and $\widetilde{\L}^{p}$ as the closure of $\mathcal{V}$ in the Lebesgue spaces $\mathrm{L}^2(\mathbb{T}^d;\R^d)$ and $\mathrm{L}^p(\mathbb{T}^d;\R^d)$ for $p\in(2,\infty)$, respectively. We also define the space $\V$ as the closure of $\mathcal{V}$ in the Sobolev space $\mathrm{H}^1(\mathbb{T}^d;\R^d)$. Then, we characterize the spaces $\H$, $\widetilde{\L}^p$ and $\V$ with the norms  $$\|\y\|_{\H}^2:=\int_{\mathbb{T}^d}|\y(x)|^2\d x,\quad \|\y\|_{\widetilde{\L}^p}^p:=\int_{\mathbb{T}^d}|\y(x)|^p\d x\ \text{ and }\ \|\y\|_{\V}^2:=\int_{\mathbb{T}^d}(|\y(x)|^2+|\nabla\y(x)|^2)\d x,$$ respectively. 
 Let $(\cdot,\cdot)$ denote the inner product in the Hilbert space $\H$ and $\langle \cdot,\cdot\rangle $ represent the induced duality between the spaces $\V$  and its dual $\V'$ as well as $\widetilde{\L}^p$ and its dual $\widetilde{\L}^{p'}$, where $\frac{1}{p}+\frac{1}{p'}=1$. Note that $\H$ can be identified with its own dual $\H'$. From \cite[Subsection 2.1]{FKS}, we have that the sum space $\V'+\widetilde{\L}^{p'}$ is well defined and  is a Banach space with respect to the norm (see \cite{MT2} also)
\begin{align*}
	\|\y\|_{\V'+\widetilde{\L}^{p'}}&:=\inf\{\|\y_1\|_{\V'}+\|\y_2\|_{\wi\L^{p'}}:\y=\y_1+\y_2, \y_1\in\V' \ \text{and} \ \y_2\in\wi\L^{p'}\}\nonumber\\&=
	\sup\left\{\frac{|\langle\y_1+\y_2,\f\rangle|}{\|\f\|_{\V\cap\widetilde{\L}^p}}:\boldsymbol{0}\neq\f\in\V\cap\widetilde{\L}^p\right\},
\end{align*}
where $\|\cdot\|_{\V\cap\widetilde{\L}^p}:=\max\{\|\cdot\|_{\V}, \|\cdot\|_{\wi\L^p}\}$ is a norm on the Banach space $\V\cap\widetilde{\L}^p$. Also the norm $\max\{\|\y\|_{\V}, \|\y\|_{\wi\L^p}\}$ is equivalent to the norms  $\|\y\|_{\V}+\|\y\|_{\widetilde{\L}^{p}}$ and $\sqrt{\|\y\|_{\V}^2+\|\y\|_{\widetilde{\L}^{p}}^2}$ on the space $\V\cap\widetilde{\L}^p$. Furthermore, we have
$
(\V'+\widetilde{\L}^{p'})'=	\V\cap\widetilde{\L}^p \  \text{and} \ (\V\cap\widetilde{\L}^p)'=\V'+\widetilde{\L}^{p'},
$
Moreover, we have the continuous embeddings $$\V\cap\widetilde{\L}^p\hookrightarrow\V\hookrightarrow\H\cong\H'\hookrightarrow\V'\hookrightarrow\V'+\widetilde{\L}^{p'},$$ where the embedding $\V\hookrightarrow\H$ is compact. The following interpolation inequality is useful throughout the paper: Let $0\leq s_1\leq s\leq s_2\leq\infty$ and $0\leq\theta\leq1$ be such that $\frac{1}{s}=\frac{\theta}{s_1}+\frac{1-\theta}{s_2}$. Then for $\y\in\L^{s_2}(\mathbb{T}^d)$, we have
\begin{align*}
	\|\y\|_{\L^s}\leq\|\y\|_{\L^{s_1}}^{\theta}\|\y\|_{\L^{s_2}}^{1-\theta}.
\end{align*} 

\subsection{Linear operator}\label{linope}
Let $\mathcal{P}_p: \L^p(\mathbb{T}^d) \to\wi\L^p,$ $p\in[1,\infty)$ be the Helmholtz-Hodge (or Leray) projection operator (cf.  \cite{JBPCK,DFHM}, etc.).	Note that $\mathcal{P}_p$ is a bounded linear operator and for $p=2$,  $\mathcal{P}:=\mathcal{P}_2$ is an orthogonal projection (see \cite[Section 2.1]{JCR}). We define the Stokes operator 
\begin{equation*}
	\left\{
	\begin{aligned}
		\A\y&:=-\mathcal{P}\Delta\y=-\Delta\y,\;\y\in\D(\A),\\
	\D(\A)&:=\V\cap{\H}^{2}_\mathrm{p}(\mathbb{T}^d). 
	\end{aligned}
	\right.
\end{equation*}
For the Fourier expansion $\y=\sum\limits_{k\in\mathbb{Z}^d} e^{2\pi i k\cdot x} \y_{k} ,$ we calculate by using Parseval's identity
\begin{align*}
	\|\y\|_{\H}^2=\sum\limits_{k\in\mathbb{Z}^d} |\y_{k}|^2 \  \text{and} \ \|\A\y\|_{\H}^2=(2\pi)^4\sum_{k\in\mathbb{Z}^d}|k|^{4}|\y_{k}|^2.
\end{align*}
Therefore, we have 
\begin{align*}
	\|\y\|_{\H^2_\mathrm{p}}^2=\sum_{k\in\mathbb{Z}^d}|\y_{k}|^2+\sum_{k\in\mathbb{Z}^d}|k|^{4}| \y_{k}|^2=\|\y\|_{\H}^2+\frac{1}{(2\pi)^4}\|\A\y\|_{\H}^2\leq\|\y\|_{\H}^2+\|\A\y\|_{\H}^2.
\end{align*}
Moreover, by the definition of $\|\cdot\|_{\H^2_\mathrm{p}}$, we have $	\|\y\|_{\H^2_\mathrm{p}}^2\geq\|\y\|_{\H}^2+\|\A\y\|_{\H}^2$ and hence it is immediate that both the norms are equivalent and  $\D(\I+\A)=\H^2_\mathrm{p}(\mathbb{T}^d)$. 

Let us define a new operator $\mathscr{A}:\D(\mathscr{A})\subset \H\to\H$ by $\mathscr{A}:=\I+\A,$ where $\D(\mathscr{A})=\D(\A)$ and $\I$ is the identity operator on $\H$. It is clear that $\mathscr{A}$ is a non-negative self-adjoint and unbounded linear operator in $\H$. Then, we calculate
\begin{align*}
	\|\mathscr{A}^{\frac{1}{2}}\y\|_{\H}^2=(\mathscr{A}^{\frac{1}{2}}\y,\mathscr{A}^{\frac{1}{2}}\y)=(\y,\mathscr{A}\y)=
	\|\y\|_{\H}^2+\|\nabla\y\|_{\H}^2=\|\y\|_{\V}^2,
\end{align*}
which yields that $\D(\mathscr{A}^{\frac{1}{2}})=\V$. Moreover, the inverse $(\I+\A)^{-1}$ is also self-adjoint. The invertibilty of $\I+\A$ is follows from the fact that $\A=-\Delta$ is an $m$-accretive operator in $\H$ (see \cite[Chapter 1]{VB1}), which says that $\mathrm{R}(\I+\A)=\H$. This means for every $\f\in\H$, $\y\in\D(\A)$, the equation $\A\y+\y=\f$ 
has a unique solution. Also, we calculate
\begin{align*}
	\|\y\|_{\V}^2=\|\y\|_{\H}^2+\|\nabla\y\|_{\H}^2=\|\y\|_{\H}^2+\langle\A\y,\y\rangle=(\f,\y)\leq\|\f\|_{\H}\|\y\|_{\H}\leq\|\f\|_{\H}\|\y\|_{\V},
\end{align*}
which implies that $\|\y\|_{\V}\leq\|\f\|_{\H}$, that is, $\|(\I+\A)^{-1}\f\|_{\V}\leq\|\f\|_{\H}.$ 
Thus, $(\I+\A)^{-1}$ maps bounded subset of $\H$ into bounded subsets of $\V$. Since the embedding $\V\hookrightarrow\H$ is compact, therefore $(\I+\A)^{-1}$ is a compact operator on $\H$ and thus by the spectral mapping theorem, the spectrum of $\I+\A$ consists of an infinite sequence of eigenvalues $0<\lambda_1\leq\lambda_2\leq\ldots\lambda_k\leq\ldots,$ with $\lambda_k\to\infty$ as $k\to\infty$ such that  
\begin{align*}
	\mathscr{A}\w_k=\lambda_k\w_k, \  \ \text{for} \ k=1,2,\ldots,
\end{align*} 
where $\w_k$'s are the corresponding eigenfunctions in $\D(\mathscr{A})$ which forms an orthonormal basis for $\H$. We can express $\y$ in the Fourier expansion form as $\y=\sum\limits_{k\in\mathbb{Z}^d}\y_{k} \mathrm{e}^{2\pi i k\cdot x /  \mathrm{L}}$ or we can write
\begin{align*}
	\y=\sum\limits_{k\in\mathbb{Z}^d\setminus\{\boldsymbol{0}\}}\y_{k} \mathrm{e}^{2\pi i k\cdot\x /  \mathrm{L}} +\y_0 :=\wi{\y}+\y_0,
\end{align*}
where $\wi{\y}:=\sum\limits_{k\in\mathbb{Z}^d\setminus\{\boldsymbol{0}\}}\y_{k} \mathrm{e}^{2\pi i k\cdot x /  \mathrm{L}}$ and $\w_0$ is the element corresponding to $k=(0,\ldots,0)$. 
Then from \cite[Chapter 6]{JCR}, we calculate 
\begin{align*}
	\A\y=-\Delta\y=\left(\frac{4\pi^2}{\mathrm{L}^2}\right)\sum\limits_{k\in\mathbb{Z}^d}|k|^2\y_{k} \mathrm{e}^{2\pi i k\cdot x /  \mathrm{L}},
\end{align*}
and therefore $\mathscr{A}\y=(\I+\A)\y=\sum\limits_{k\in\mathbb{Z}^d}\left(1+\frac{4\pi^2}{\mathrm{L}^2}\right)|k|^2\y_{k} \mathrm{e}^{2\pi i k\cdot x /  \mathrm{L}}.$
It shows that $\y_k$ is an eigenfunction of $\mathscr{A}$ corresponding to the eigenvalue $1+\tilde{\lambda}_k$, where $\tilde{\lambda}_k=\frac{4\pi^2}{\mathrm{L}^2}|k|^2$. In particular, the operator $\mathscr{A}$ and the Stokes operator $\A$ have the same eigenfunctions corresponding to eigenvalues $1+\tilde{\lambda}_k$ and $\tilde{\lambda}_k$, respectively. For $k=\boldsymbol{0}$ and any constant vector $\w_0,$   we have $\mathscr{A}\w_0=(\I+\A)\w_0=\w_0=(1+0)\w_0$. Thus, $\w_0$ is an eigenfunction of $\mathscr{A}$ corresponding to the eigenvalue $\lambda_0=1$. Note that $0$ is an eigenvalue of the  Stokes operator $\A$.

From \cite[Chapter 6]{JCR}, the eigenfunctions of the operator $\mathscr{A}=\I+\A$ on the torus $\mathbb{T}^d$ are given by  
\begin{align}\label{ef}
	\w_k^{(s)}:=\sqrt{\frac{2}{\mathrm{L}^d}}\sin\left(2\pi k\cdot\frac{x}{\mathrm{L}}\right),  \ \w_k^{(c)}:=\sqrt{\frac{2}{\mathrm{L}^d}} \cos\left(2\pi k\cdot\frac{x}{\mathrm{L}}\right) \ \ \text{and} \ \w_k^{(0)}=\w_0,
\end{align}
with eigenvalues $\lambda_k=1+\frac{4\pi^2}{\mathrm{L}^2}|k|^2$. We can order these eigenvalues in a non-decreasing manner such that for every $\lambda_k $, $k\in\mathbb{Z}^d\setminus\{\boldsymbol{0}\}$, one can find a corresponding eigenvalue $\lambda_m$ for some appropriate $m\in\N$, with $\lambda_{m+1}\geq\lambda_{m}$ and the corresponding eigenfunction $\w_k=\w_m$ (cf. \cite[pp. 52]{corr}).

\subsection{Bilinear operator}
Let us define the \textit{trilinear form} $b(\cdot,\cdot,\cdot):\V\times\V\times\V\to\R$ by $$b(\y,\z,\w)=\int_{\mathbb{T}^d}(\y(x)\cdot\nabla)\z(x)\cdot\w(x)\d x=\sum_{i,j=1}^d\int_{\mathbb{T}^d}\y_i(x)\frac{\partial \z_j(x)}{\partial x_i}\w_j(x)\d x.$$ If $\y, \z$ are such that the linear map $b(\y, \z, \cdot) $ is continuous on $\V$, the corresponding element of $\V'$ is denoted by $\mathcal{B}(\y, \z)$. We also denote $\mathcal{B}(\y) = \mathcal{B}(\y, \y):=\mathcal{P}[(\y\cdot\nabla)\y]$.
An integration by parts yields 
\begin{equation*}
	\left\{
	\begin{aligned}
		b(\y,\z,\w) &=  -b(\y,\w,\z),\ &&\text{ for all }\ \y,\z,\w\in \V,\\
		b(\y,\z,\z) &= 0,\ &&\text{ for all }\ \y,\z \in\V.
	\end{aligned}
	\right.\end{equation*}
	
\subsection{Nonlinear operator}\label{nonlin}
Let us define an operator $\mathcal{C}_1(\y):=\mathcal{P}(|\y|^{r-1}\y)$ for $\y\in\V\cap\L^{r+1}$.  Since the projection operator $\mathcal{P}$ is bounded from $\H^1$ into itself (cf. \cite[Remark 1.6]{Te}), the operator $\mathcal{C}_1(\cdot):\V\cap\widetilde{\L}^{r+1}\to\V'+\widetilde{\L}^{\frac{r+1}{r}}$ is well-defined and we have $\langle\mathcal{C}_1(\y),\y\rangle =\|\y\|_{\widetilde{\L}^{r+1}}^{r+1}.$ Moreover, for all $\y\in\V\cap\L^{r+1}$, the map $\mathcal{C}_1(\cdot):\V\cap\widetilde{\L}^{r+1}\to\V'+\widetilde{\L}^{\frac{r+1}{r}}$ is Gateaux differentiable with Gateaux derivative given by 
\begin{align}\label{C}
	\mathcal{C}_1'(\y)\z&=\left\{\begin{array}{cl}\mathcal{P}(\z),&\text{ for }r=1,\\ \left\{\begin{array}{cc}\mathcal{P}(|\y|^{r-1}\z)+(r-1)\mathcal{P}\left(\frac{\y}{|\y|^{3-r}}(\y\cdot\z)\right),&\text{ if }\y\neq \mathbf{0},\\\mathbf{0},&\text{ if }\y=\mathbf{0},\end{array}\right.&\text{ for } 1<r<3,\\ \mathcal{P}(|\y|^{r-1}\z)+(r-1)\mathcal{P}(\y|\y|^{r-3}(\y\cdot\z)), &\text{ for }r\geq 3,\end{array}\right.
\end{align}
for all $\z\in\V\cap\widetilde{\L}^{r+1}$. For $r\geq3$, we also have the existence of second order Gateaux derivative given by (see \cite[Subsection 2.4]{MT3})
\begin{align}\label{C.}
	\mathcal{C}_1''(\y)(\z\otimes\w)&=\left\{\begin{array}{cl} \left\{\begin{array}{cc}(r-1)\mathcal{P}\{|\y|^{r-3}\left(\y\cdot\z)\z+(\y\cdot\z)\z+(\z\cdot\z)\y\right)\},&\text{ if }\y\neq \mathbf{0},\\\mathbf{0},&\text{ if }\y=\mathbf{0},\end{array}\right.&\text{ for } 3<r<5,\\ (r-1)\mathcal{P}\{|\y|^{r-3}\left(\y\cdot\z)\z+(\y\cdot\z)\z+(\z\cdot\z)\y\right)\}\\
		+(r-1)(r-3)\mathcal{P}\{|\y|^{r-5}(\y\cdot\z)(\y\cdot\z)\y\}, &\text{ for }r\geq5,\end{array}\right.
\end{align}
%\begin{align}\label{Cg}
%	\mathcal{C}_1''(\y)(\z\otimes\w)&=
%	(r-1)\mathcal{P}\{|\y|^{r-3}\left(\y\cdot\z)\z+(\y\cdot\z)\z+(\z\cdot\z)\y\right)\}\nonumber\\&
%	\quad+(r-1)(r-3)\mathcal{P}\{|\y|^{r-5}(\y\cdot\z)(\y\cdot\z)\y\},
%\end{align}
for all $\y,\z,\w\in\V\cap{\wi \L}^{r+1}$. For all $\y,\z\in\V\cap{\wi \L}^{r+1}$, we have (see \cite[Subsection 2.4]{MT4})
\begin{align}\label{C1}
	\langle\mathcal{C}_1(\y)-\mathcal{C}_1(\z),\y-\z\rangle&\geq \frac{1}{2}\||\y|^{\frac{r-1}{2}}(\y-\z)\|_{\H}^2+\frac{1}{2}\||\z|^{\frac{r-1}{2}}(\y-\z)\|_{\H}^2\geq \frac{1}{2^{r-1}}\|\y-\z\|_{\wi\L^{r+1}}^{r+1},
\end{align}
for all $r\in[1,\infty)$. Moreover, we define $\mathcal{C}_2(\y):=\mathcal{P}(|\y|^{q-1}\y)$ for $1\leq q<r$. The nonlinear operator $\mathcal{C}_2(\cdot)$ satisfies the similar properties as $\mathcal{C}_1(\cdot)$. 
\begin{remark}\label{C2}
	From \cite{MT2}, we have  for all $\y\in\D(\A)$ (see \cite[Subsection 3.5]{MT2}) 
	\begin{align*}
		\|\y\|_{\widetilde{\L}^{p(r+1)}}^{r+1}\leq C\int_{\mathbb{T}^d}|\nabla \y(x)|^2|\y(x)|^{r-1}\d x+C \int_{\mathbb{T}^d} |\y(x)|^{r+1}\d x,
	\end{align*}
	for all $p\in[2,\infty)$ for $d=2$ and $p=3$ for $d=3$. 
\end{remark}	

	\section{Solvability Result}\label{sec1}\setcounter{equation}{0} 
This work mainly investigates a feedback stabilization problem for the system \eqref{1} via finite- or infinite-dimensional feedback controllers. In order to achieve this, we first do the setting of a stabilization problem (see the inclusion problem \eqref{1p44}) and prove its well-posedness. We need the following assumption to obtain the global existence and uniqueness of weak solutions for the system \eqref{1p44}:
\begin{hypothesis}\label{AssupK}
	Let $\mathcal{K}\subset\H$  be a closed and convex set such that $\boldsymbol{0}\in\mathcal{K}$ and 
	\begin{align}\label{appl1.1}
		(\I+\lambda\A)^{-1}\mathcal{K}\subset\mathcal{K},  \  \text{for all} \  \lambda>0.
	\end{align}
\end{hypothesis}
We consider the indicator function $\I_{\mathcal{K}}:\H\to\overline{\R}$ (see \cite{VB2}) by 
\begin{align}\label{appl1.2}
	\I_{\mathcal{K}}(\x)=
	\begin{cases}
		{0},  &\text{if } \ \x\in\mathcal{K},\\
		+\infty, &\text{if } \  \x\notin\mathcal{K},
	\end{cases}
\end{align}
which is a lower-semicontinuous proper convex function, whose subdifferential is given by 
\begin{align}\label{subb}
	\partial\I_{\mathcal{K}}(\x)=
	\begin{cases}
		\emptyset, &\text{if } \ \x\notin\mathcal{K},\\
		\{\boldsymbol{0}\}, &\text{if } \  \x\in \mathrm{int}(\mathcal{K}),\\
		N_{\mathcal{K}}(\x)=\{\y\in\H:(\y,\x-\z)\geq0,\ \text{for all} \ \z\in\mathcal{K}\}, &\text{if } \ \x\in\mathrm{bdy}(\mathcal{K}),
	\end{cases}
\end{align}
where $\mathrm{int}(\mathcal{K})$ and $\mathrm{bdy}(\mathcal{K})$ denote the interior and boundary  of $\mathcal{K}$, respectively. Here $N_{\mathcal{K}}(\x)$ is the well-known Clark's normal cone to $\mathcal{K}$ at $\x$. It is readily observed that $\D(\partial\I_{\mathcal{K}})=\mathcal{K}$. From \cite[Theorem 2.1, pp. 62]{VB2}, we infer that the subdifferential operator $\partial\I_{\mathcal{K}}(\cdot)$ is a maximal monotone operator in $\H\times\H$. The regularization of $\I_{\mathcal{K}}$ is given by (see \cite[Chapter 2, Theorem 2.2]{VB2}) $$(\I_{\mathcal{K}})_{\lambda}(\x)=\frac{1}{2\lambda}\|\x-\P_{\mathcal{K}}(\x)\|_{\H}^2,$$
and its Gateaux derivative $(\partial\I_{\mathcal{K}})_{\lambda}(\x)=\frac{1}{\lambda}(\x-\P_{\mathcal{K}}(\x)),$ 
where $\P_{\mathcal{K}}:\L^2(\mathbb{T}^d)\to\mathcal{K}$ is the projection operator of $\x$ onto $\mathcal{K}$ which is equal to the resolvent  $(\I+\lambda\partial\I_{\mathcal{K}})^{-1}.$ It implies that the above derivative is equal to the Yosida approximation of $\partial\I_{\mathcal{K}},$ that is,       
\begin{align}\label{subdiff}
	(\partial\I_{\mathcal{K}})_{\lambda}(\x)=\frac{1}{\lambda}(\x-(\I+\lambda\partial\I_{\mathcal{K}})^{-1}(\x)),  \  \text{for all} \  \x\in\H.
\end{align}
Furthermore, $\|\partial\I_{\mathcal{K}}(\boldsymbol{0})\|_{\H}=0.$ Indeed, for $\boldsymbol{0}\in\mathrm{bdy}(\mathcal{K})$, we know that
\begin{align*}
	\|\partial\I_{\mathcal{K}}(\boldsymbol{0})\|_{\H}:=\inf\{\|\y\|_{\H}:\y\in\partial\I_{\mathcal{K}}(\boldsymbol{0})\}&=\inf\{\|\y\|_{\H}:\y\in N_{\mathcal{K}}(\boldsymbol{0})\}=0,
\end{align*} 
where we have used the fact that $\boldsymbol{0}\in N_{\mathcal{K}}(\boldsymbol{0})$. On the other hand, for $\boldsymbol{0}\in\mathrm{int}(\mathcal{K})$,  $\|\partial\I_{\mathcal{K}}(\boldsymbol{0})\|_{\H}=0$ is obvious.

\subsection{Abstract formulation and weak solution} We first write our problem in the abstract form. For this, let us define the continuous operators $\wi{\mathcal{B}}(\cdot):\V\to\V'$ and $\wi{\mathcal{C}_i}(\cdot):\V\cap\widetilde{\L}^{r_i+1}\to\V'+\widetilde{\L}^{\frac{r_i+1}{r_i}}$, where $r_1=r$ and $r_2=q$, by
\begin{align*}
	\wi{\mathcal{B}}(\y):=\mathcal{B}(\y+\y_e)-\mathcal{B}(\y_e) \text{ and } \wi{\mathcal{C}}_i(\y):=\mathcal{C}_i(\y+\y_e)-\mathcal{C}_i(\y_e), \  \text{for} \ \ i=1,2,
\end{align*}
where $\y_e\in\D(\A)$ is fixed. It is observed that the operators $\wi{\mathcal{B}}(\cdot)$ and $\wi{\mathcal{C}_i}(\cdot)$ take values from $\D(\A)$ into $\H$ also.  We now consider the CBFeD system  \eqref{a} in an abstract form perturbed by a subdifferential:
	\begin{equation}\label{1p44}
	\left\{
	\begin{aligned}
		\frac{\d \y(t)}{\d t}+\mu\A\y(t)+\wi{\mathcal{B}}(\y(t))+\alpha\y(t)+ \beta\wi{\mathcal{C}}_1(\y(t))&+\gamma\wi{\mathcal{C}}_2(\y(t))+ \mathfrak{F}\y(t)+\partial\I_{\mathcal{K}}(\y(t))\ni \f(t), \\
		\y(0)&=\y_0, 
	\end{aligned}
	\right.
\end{equation}
for  \mbox{a.e. $ t\in[0,T]$},  where $\mathfrak{F}\y:=\mathcal{P}\Xi\y$, the initial data $\y_0\in\H$ and forcing term $\f:=\mathcal{P}(\g-\g_e)\in\mathrm{L}^2(0,T;\H)$. Since $\mathfrak{F}$ is a continuous linear operator on $\H$, there exists a constant $M$ such that 
\begin{align}\label{F_Oper}
	\|\mathfrak{F}\y\|_{\H}\leq M \|\y\|_{\H},
\end{align}
for all $\y\in\H$. We are now in a position to provide the definition of solution for the system \eqref{1p44} in the weak sense. 

\begin{definition}\label{wdefn}
	A function $\y(\cdot)$ is said to be a \emph{weak solution} of the system \eqref{1p44} on the time interval $[0,T]$,  with initial data $\y_0\in\H$ if 
	\begin{align*}
		\y\in\mathrm{C}([0,T];\H)\cap\mathrm{L}^2(0,T;\V)\cap\mathrm{L}^{r+1}(0,T;\wi\L^{r+1}),
	\end{align*}
	with $\frac{\d\y}{\d t}\in\mathrm{L}^{\frac{r+1}{r}}(\tau,T;\H),$ for any $\tau>0$, and there exists $\xi(t)\in\partial\I_{\mathcal{K}}(\y(t))$ for a.e. $t\in[\tau,T]$  such that following equality holds:
	\begin{align*}
		-&\int_\tau^t (\y(s),\v_t(s))\d s+\mu\int_\tau^t (\nabla(\y(s)+\y_e),\nabla\v(s))\d s+\alpha\int_\tau^t (\y(s),\v(s))\d s\nonumber\\&+ \int_\tau^t \left(((\y(s)+\y_e)\cdot\nabla)(\y(s)+\y_e),\v(s)\right)\d s +\beta\int_\tau^t \left(|\y(s)+\y_e|^{r-1}(\y(s)+\y_e),\v(s)\right)\d s\nonumber\\& +\gamma\int_\tau^t \left(|\y(s)+\y_e|^{q-1}(\y(s)+\y_e),\v(s)\right)\d s+ \int_\tau^t (\mathfrak{F}(\y(s)),\v(s))\d s+\int_\tau^t (\xi(s),\v(s))\d s\nonumber\\ &\quad=-(\y(t)+\y_e,\v(t))+ (\y(\tau)+\y_e,\v(\tau))+\int_\tau^t (\f(s),\v(s))\d s+\int_\tau^t ((\y_e\cdot\nabla)\y_e,\v(s))
		\d s \nonumber\\&\quad\quad+\beta\int_\tau^t \left(|\y_e|^{r-1}\y_e,\v(s)\right)\d s+\gamma\int_\tau^t \left(|\y_e|^{q-1}\y_e,\v(s)\right)\d s ,
	\end{align*}
	for all $t\in[\tau,T]$ and for all functions $\v$ in the space of divergence free test functions 
	\begin{align*} 
\mathcal{V}_T&:=\{\v\in\C_{\mathrm{p}}^{\infty}(\mathbb{T}^d\times[\tau,T];\R^d):\nabla\cdot\v=0\},
	\end{align*}
	and the initial data is satisfied in the following sense:
	\begin{align*}
		\lim_{\tau\to 0^{+}}\|\y(\tau)-\y_0\|_{\H}=0. 
	\end{align*}
\end{definition}
%\begin{remark}\label{remk1}
%	\textbf{\emph{1}}. We have chosen $\tau>0$ in such a way that the solution $\y(t)$ satisfies the weak formulation \eqref{wf} of C at $t=\tau$. \\
%	 \textbf{\emph{2}}. But in between, that is in $(0,\tau)$, we don't know whether such $\y(\cdot)$ exits or not, which satisfies \eqref{wf}. \\
%	 \textbf{\emph{3}}. Moreover, one can obtain the uniqueness of C for all $t\in[0,T]$ by using the regularity of the solutions that we have (see above definition). \\
%	\textbf{\emph{4}}. Even though we don't know the existence of solution $\y(t)$ for $t\in(0,\tau)$ satisfying \eqref{wf}, but we will prove later that if it exists then by the regularity properties, it must be unique. 
%\end{remark}
%We need to the following assumption to obtain  the global existence and uniqueness of weak solutions of the CBFeD  system \eqref{a}. 

Let us now state the main result of this paper regarding the solvability of the system \eqref{1p44}.
\begin{theorem}\label{mainT}
	Let $\mathcal{K}\subset\H$ satisfiy  Hypothesis \ref{AssupK}. Let $\y_0\in\mathcal{K}=\overline{\D(\A)\cap\mathcal{K}}^{\|\cdot\|_{\H}}$ and $\f\in\mathrm{L}^2(0,T;\H)$. Then, for $d=2,3$ with $r\geq3$ and $d=r=3$ with $2\beta\mu>1$, there exists a weak solution $\y(\cdot)$  of the system \eqref{1p44} in the sense of Definition \ref{wdefn}. Furthermore, such a  weak solution is unique.
\end{theorem}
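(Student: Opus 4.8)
The plan is to recast the inclusion \eqref{1p44} as an abstract Cauchy problem $\frac{\d\y}{\d t}+\mathfrak{G}(\y)\ni\f$ in $\H$, where $\mathfrak{G}:=\mathcal{M}+\partial\I_{\mathcal{K}}$ and the single-valued part is
\begin{align*}
\mathcal{M}(\y):=\mu\A\y+\wi{\mathcal{B}}(\y)+\alpha\y+\beta\wi{\mathcal{C}}_1(\y)+\gamma\wi{\mathcal{C}}_2(\y)+\mathfrak{F}\y,\qquad \D(\mathcal{M})=\D(\A),
\end{align*}
and then to invoke the generation theory for quasi-$m$-accretive operators (cf. \cite{VB1}). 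The core of the argument is to prove that $\mathfrak{G}$ is quasi-$m$-accretive in $\H$; this splits into an accretivity estimate and a range condition, which are precisely the contents of Propositions \ref{prop3.1} and \ref{prop3.3}.

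First I would establish the quasi-accretivity $(\mathcal{M}(\y)-\mathcal{M}(\z),\y-\z)\geq-\omega\|\y-\z\|_{\H}^2$ for a suitable $\omega\geq0$. The self-adjoint non-negativity of $\A$, the positivity of $\alpha\y$, the monotonicity lower bound \eqref{C1} for $\wi{\mathcal{C}}_1$, and the boundedness \eqref{F_Oper} of $\mathfrak{F}$ (which only feeds $\omega$) are immediate; since $r>q$, the pumping term $\gamma\wi{\mathcal{C}}_2$ (possibly with $\gamma<0$) is absorbed into $\beta\wi{\mathcal{C}}_1$ by interpolation and Young's inequality. The delicate term is the convective one: writing $\w=\y-\z$ and using the antisymmetry of $b$, the difference $(\wi{\mathcal{B}}(\y)-\wi{\mathcal{B}}(\z),\w)$ reduces to $b(\w,\z+\y_e,\w)=-b(\w,\w,\z+\y_e)$, which I would bound by $\|\nabla\w\|_{\H}\,\||\z+\y_e|\,\w\|_{\H}$. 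For $r>3$ the fast-growing absorption dominates this outright through Young's inequality and interpolation; in the borderline case $d=r=3$ the combination $\mu\|\nabla\w\|_{\H}^2+\tfrac{\beta}{2}\||\z+\y_e|\,\w\|_{\H}^2-\|\nabla\w\|_{\H}\,\||\z+\y_e|\,\w\|_{\H}$ is a nonnegative quadratic form in $(\|\nabla\w\|_{\H},\,\||\z+\y_e|\,\w\|_{\H})$ exactly when $2\beta\mu\geq1$, which is the standing hypothesis. This estimate, pitting the convective nonlinearity against the available dissipation, is the main analytical obstacle, and it is where both $r\geq3$ and the threshold $2\beta\mu>1$ enter.

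Next I would verify the range condition: for some $\lambda>0$ and every $\f\in\H$ the equation $\y+\lambda\mathcal{M}(\y)=\f$ is solvable. Exploiting the coercivity supplied by $\beta\wi{\mathcal{C}}_1$, I would bypass the perturbation result \cite[Chapter II, Theorem 3.5]{VB1} and instead run a Galerkin (or Yosida) scheme in the eigenbasis \eqref{ef}, derive uniform bounds from the estimates above together with \eqref{3} and Remark \ref{C2}, and pass to the limit via the Minty--Browder monotonicity trick to identify the weak limits of $\wi{\mathcal{B}}$, $\wi{\mathcal{C}}_1$ and $\wi{\mathcal{C}}_2$. Adjoining the maximal monotone subdifferential $\partial\I_{\mathcal{K}}$ and using the invariance \eqref{appl1.1} to ensure that the resolvent of $\mathfrak{G}$ maps into $\mathcal{K}$, I would conclude that $\mathfrak{G}$ is quasi-$m$-accretive with $\overline{\D(\mathfrak{G})}=\mathcal{K}$.

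With quasi-$m$-accretivity in hand, the nonlinear semigroup theory yields, for every $\y_0\in\mathcal{K}$, a unique generalized solution that is a strong solution whenever $\y_0\in\D(\A)\cap\mathcal{K}$. Pairing the equation with $\y$ and using $\boldsymbol{0}\in\mathcal{K}$, so that $(\xi,\y)\geq0$ for any $\xi\in\partial\I_{\mathcal{K}}(\y)$, yields the a priori bounds placing $\y\in\mathrm{C}([0,T];\H)\cap\mathrm{L}^2(0,T;\V)\cap\mathrm{L}^{r+1}(0,T;\wi\L^{r+1})$. The subtle point, flagged in the introduction, is that for $\y_0\in\H$ alone one cannot control the selection $\xi=\partial\I_{\mathcal{K}}(\y)$ up to $t=0$; here I would invoke the smoothing estimate \cite[Theorem 1.6]{VB2} to bound $\frac{\d\y}{\d t}$ and $\xi$ in $\H$ on $[\tau,T]$ for each $\tau>0$, recover the integral identity of Definition \ref{wdefn} there, and then let $\tau\to0^{+}$, the strong $\H$-continuity giving $\|\y(\tau)-\y_0\|_{\H}\to0$. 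Uniqueness follows directly from the quasi-accretivity estimate: subtracting two solutions and pairing with their difference, the monotone contributions (Stokes, both $\wi{\mathcal{C}}_i$, and $\partial\I_{\mathcal{K}}$) carry a favorable sign, the convective term is absorbed exactly as above, and Gr\"onwall's inequality closes the argument.
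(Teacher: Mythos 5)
Your accretivity analysis is sound and matches the paper's: the treatment of the convective term against the dissipation, the absorption of $\gamma\wi{\mathcal{C}}_2$ into $\beta\wi{\mathcal{C}}_1$ via interpolation, and the quadratic-form threshold $2\beta\mu\geq1$ for $d=r=3$ are precisely the contents of Proposition \ref{prop3.1} and Remark \ref{RK}, and your uniqueness argument coincides with the paper's. The genuine gap is in the step where you pass from abstract generation theory to a weak solution in the sense of Definition \ref{wdefn} for $\y_0\in\H$ and $\f\in\mathrm{L}^2(0,T;\H)$. First, with $\f\in\mathrm{L}^2(0,T;\H)$ only, the generation theory for quasi-$m$-accretive operators does not produce strong solutions even for $\y_0\in\D(\A)\cap\mathcal{K}$; it requires $\f\in\W^{1,1}(0,T;\H)$ (or $\mathrm{BV}$), which is why the paper approximates both the initial data and the forcing in \eqref{density}. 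Second, and more seriously, the ``smoothing estimate'' you invoke does not exist at this level of generality: the regularizing effect (a mild solution becoming strong for $t>0$, with bounds on $\frac{\d\y}{\d t}$ and on a selection $\xi\in\partial\I_{\mathcal{K}}(\y)$ over $[\tau,T]$) is a theorem for semigroups generated by \emph{subdifferentials}, and $\mathfrak{G}$ is not a subdifferential --- it contains the non-variational convective term $\wi{\mathcal{B}}$ and the generic bounded operator $\mathfrak{F}$. The result \cite[Theorem 1.6]{VB2} is used in the paper only for the Yosida-regularized problem \eqref{4p1} with smooth data $\y_0^j\in\D(\A)\cap\mathcal{K}$, $\f^j\in\W^{1,1}(0,T;\H)$, where it yields right-differentiability and the pointwise identity \eqref{4p1.1}; it is not a smoothing result for initial data merely in $\overline{\D(\mathfrak{G})}$, so citing it cannot close this step.

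What actually fills this hole in the paper is a chain of structure-specific, uniform estimates on the Yosida-approximated solutions, and your proposal would have to reproduce (or replace) all of it: the energy estimate \eqref{4p2}; the $\H^1$-bound on $[\tau,T]$ with the $1/\tau$ factor \eqref{4p2.2}, obtained by integrating the differential inequality over $(\tau,t)$ and then integrating again in $\tau$; the time-difference-quotient bound \eqref{4pr}--\eqref{4pr1.1} for $\frac{\d^+\y^j}{\d t}$; the operator bounds \eqref{4p4.3}--\eqref{4p4.6} which isolate the Yosida term $(\partial\I_{\mathcal{K}})_{\frac{1}{j}}(\y^j)$ from the equation; then the Cauchy-sequence argument \eqref{Cauchy} giving uniform strong convergence in $\mathrm{C}([0,T];\H)$, the weak--strong closedness of the graph of $\partial\I_{\mathcal{K}}$ to produce the selection $\phi\in\partial\I_{\mathcal{K}}(\y)$, and finally the Minty--Browder argument \eqref{mb7}--\eqref{532} to identify $\mathscr{F}_0=\mathscr{F}(\y)$. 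None of this is a consequence of quasi-$m$-accretivity alone; it is exactly the part of the proof your plan compresses into a single (inapplicable) citation.
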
 
We prove Theorem \ref{mainT} in Sec. \ref{thm}. 

\begin{remark}\label{inv}
	The multivalued operator $\mathfrak{G}(\cdot) = \mu\mathrm{A} +\wi{\mathcal{B}}(\cdot)+\alpha\I+\beta\wi{\mathcal{C}}_1(\cdot)+\gamma\wi{\mathcal{C}}_2(\cdot)+\partial\I_{\mathcal{K}}(\cdot)$ is a maximal monotone operator with $\D(\mathfrak{G})=\D(\A)\cap\mathcal{K}$ (see Proposition \ref{prop3.3} below). Since $\D(\A)$ is dense subspace of $\H$ and $\mathcal{K}$ is a closed subset of $\H$, therefore from \cite[Remark and Theorem 11, pp. 110]{hb}, we have $\overline{\D(\A)\cap\mathcal{K}}^{\|\cdot\|_{\H}} =\overline{\D(\A)}^{\|\cdot\|_{\H}}\cap\mathcal{K}=\H\cap\mathcal{K}=\mathcal{K}$.
\end{remark}

\section{Proof of Theorem \ref{mainT}}\label{thm}\setcounter{equation}{0}
In this section, we prove the solvability result (Theorem \ref{mainT}) of the system \eqref{1p44}, that is, the existence and uniqueness of weak solutions of the system \eqref{1p44}. The proof is based on the existence of  unique strong solutions of some Yosida approximated system (see the system \eqref{4p1} below).

Let $\y_0\in\mathcal{K}$ and $\f\in\mathrm{L}^2(0,T;\H).$ Since the embeddings $\D(\A)\cap\mathcal{K}\hookrightarrow\H$ and $\W^{1,1}(0,T;\H)\hookrightarrow\mathrm{L}^2(0,T;\H)$ are dense, we obtain two sequences  $\{\y_0^j\}_{j\geq1}$ and $\{\f^j\}_{j\geq1}$ in $\D(\A)\cap\mathcal{K}$ and $\mathrm{W}^{1,1}(0,T;\H)$, respectively, such that
\begin{align}\label{density}
	\y_0^j\to\y_0 \  \text{ in } \ \H \  \text{and} \ \f^j\to\f \  \text{ in } \  \mathrm{L}^2(0,T;\H)\ \mbox{ as } \ j\to\infty.
\end{align}
Let us consider the following Yosida approximated system:
	\begin{equation}\label{4p1}
	\left\{
	\begin{aligned}
	\frac{\d \y^j(t)}{\d t}+\mu\A\y^j(t)+\alpha\y^j(t) +\wi{\mathcal{B}}(\y^j(t))+\beta\wi{\mathcal{C}}_1(\y^j(t))+ &\gamma\wi{\mathcal{C}}_2(\y^j(t))+\mathfrak{F}\y^j(t) +(\partial\I_{\mathcal{K}})_{\frac{1}{j}}(\y^j(t))\\ &= \f^j(t), \ \text{ a.e. } \ t\in[0,T], \\
		\y^j(0)&=\y_0^j. 
	\end{aligned}
	\right.
\end{equation}
where $(\partial\I_{\mathcal{K}})_{\frac{1}{j}}(\cdot)$ is the Yosida approximation $(\partial\I_{\mathcal{K}})_\lambda(\cdot)$ of the subdifferential $\partial\I_{\mathcal{K}}(\cdot)$ for $\lambda=\frac{1}{j}$. Form Proposition \ref{prop3.1}, we have shown that the operator $\mu\A+\wi{\mathcal{B}}(\cdot)+\beta\wi{\mathcal{C}_1}(\cdot)+\gamma\wi{\mathcal{C}_2}(\cdot)+(\alpha+\kappa)\I$ is maximal monotone in $\H$ for sufficiently large $\kappa>0$, while the operator $\mathfrak{F}(\cdot)$ is hemicontinous since it is linear and continuous from $\H$ to $\H$. Thus, from \cite[Corollary 1.4, pp 44]{VB2}, it implies that the operator $ \mu\mathrm{A} +\wi{\mathcal{B}}(\cdot)+\beta\wi{\mathcal{C}}_1(\cdot)+\gamma\wi{\mathcal{C}}_2(\cdot)+\mathfrak{F}(\cdot)+(\alpha+\kappa)\I$ is maximal monotone in $\H.$ Moreover, since the Yosida approximation $(\partial\I_{\mathcal{K}})_{\frac{1}{j}}(\cdot)$ is a demicontinuous and monotone operator,  from \cite[Corollary 1.5, pp 56]{VB2}, we infer that 
the operator $ \mu\mathrm{A}+\wi{\mathcal{B}}(\cdot)+\beta\wi{\mathcal{C}}_1(\cdot)+ \gamma\wi{\mathcal{C}}_2(\cdot)+\mathfrak{F}(\cdot)+(\partial\I_{\mathcal{K}})_{\frac{1}{j}}(\cdot)+(\alpha+\kappa)\I$ is maximal monotone in $\H\times\H.$ Hence, we have following result for \eqref{4p1} (cf. \cite[Proposition 3.4]{sKM}):
\begin{align}\label{regularity}
	\y^j\in\W^{1,\infty}(0,T;\H)\cap\mathrm{L}^{\infty}(0,T;\D(\A))\cap\C([0,T];\V),
\end{align}
$\y^j$ is right differentiable, $\frac{\d^+\y^j}{\d t}$ is right continuous and 
\begin{align}\label{4p1.1}
	&\frac{\d^+\y^j(t)}{\d t}+\mu\A\y^j(t)+\alpha\y^j(t) +\wi{\mathcal{B}}(\y^j(t))+\beta\wi{\mathcal{C}}_1(\y^j(t))+ \gamma\wi{\mathcal{C}}_2(\y^j(t))+\mathfrak{F}\y^j(t) +(\partial\I_{\mathcal{K}})_{\frac{1}{j}}(\y^j(t)) \nonumber\\&= \f^j(t), \ \text{ for all } \ t\in(0,T]. 
\end{align}
%We rewrite \eqref{4p1} as  
%\begin{align}\label{4p1.a}
%&\frac{\d \y^j(t)}{\d t}+ \mu\A(\y^j(t)+\y_e)+ \mathcal{B}(\y^j(t)+\y_e)+
%\beta\mathcal {C}_1(\y^j(t)+\y_e)+(\partial\I_{\mathcal{K}})_{\frac{1}{j}}(\y^j(t))\nonumber\\&\quad= \f^j(t)-\alpha\mathcal{C}_2(\y^j(t)+\y_e)-\mathfrak{F}(\y^j(t)+\y_e)+\mathrm{G}(\y_e),
%\end{align}
%where $\mathrm{G}(\y_e):=\mu\mathrm{A}\y_e+\mathcal{B}(\y_e)+ \beta\mathcal{C}_1(\y_e)+\alpha\mathcal{C}_2(\y_e)+\mathfrak{F}(\y_e).$ '
\vskip 2mm
\noindent
\textbf{\emph{Energy estimates.}} Taking the inner product in the first equation of the system \eqref{4p1} with $\y^j(\cdot)$ and using a similar calculation as in  \eqref{eyy}, we obtain  for a.e. $t\in[0,T]$
\begin{align*}
\frac{1}{2}\frac{\d}{\d t}\|\y^j(t)\|_{\H}^{2}+\frac{\mu}{2}\|\nabla\y^j(t)\|_{\H}^2 + \alpha\|\y^j(t)\|_{\H}^2+\frac{\beta}{2^r}\|\y^j(t)\|_{\wi\L^{r+1}}^{r+1}\leq\frac{1}{4}\|\f^j(t)\|_{\H}^2+k\|\y^j(t)\|_{\H}^2,
\end{align*}
where $k:=M+1+\varrho_{\frac{1}{2}}+\rho_{1}+\rho_{\frac{1}{2}}.$
%or we can write 
%\begin{align*}
%&\frac{\d}{\d t}\|\y^j(t)+\y_e\|_{\H}^{2} +\mu\|\y^j(t)+\y_e\|_{\V}^2 +\beta\|\y^j(t)+\y_e\|_{\wi\L^{r+1}}^{r+1}\nonumber\\&\leq\frac{2}{\mu\lambda_1}\|\f^j(t)\|_{\H}^2+\frac{2}{\mu\lambda_1}\|\mathrm{G}(\y_e)\|_{\H}^2+2M\|\y^j(t)+\y_e\|_{\H}^2.
%\end{align*}
Integrating above over $(0,t)$ and applying Gronwall's inequality, we find 
\begin{align*}
&\|\y^j(t)\|_{\H}^2+\mu\int_0^t \|\nabla\y^j(s)\|_{\H}^2 \d s+\alpha\int_0^t \|\y^j(s)\|_{\H}^2\d s + \frac{\beta}{2^{r-1}}\int_0^t \|\y^j(s)\|_{\wi\L^{r+1}}^{r+1} \d s\nonumber\\&\leq \left(\|\y_0^j\|_{\H}^2+\frac{1}{4}\int_0^t \|\f^j(s)\|_{\H}^2 \d s\right)e^{2k T},
\end{align*}
for all $t\in[0,T]$. Therefore, we have 
\begin{align}\label{4p2}
	\sup\limits_{j>0}\bigg(\sup_{t\in[0,T]}\|\y^j(t)\|_{\H}^2+\mu\int_0^T\|\nabla\y^j(s)\|_{\H}^2 \d s+\alpha\int_0^T \|\y^j(s)\|_{\H}^2\d s+\frac{\beta}{2^{r-1}}\int_0^T \|\y^j(s)\|_{\wi\L^{r+1}}^{r+1} \d s\bigg)\leq C,	
\end{align}
where $C=C(\|\y_0\|_{\H},\|\f\|_{\mathrm{L}^2(0,T;{\H})},T, k)>0$ is a constant independent of $j$. From the above energy estimate, we have following weak convergences:
\begin{equation}\label{wc1}
	\left\{
	\begin{aligned}
		\y^j&\xrightharpoonup{w^{\ast}}\ \y \ &&\text{ in } \  \mathrm{L}^{\infty}(0,T;\H),\  \	\mathfrak{F}\y^j \xrightharpoonup{w^{\ast}}\mathfrak{F}\y \ \text{ in } \  \mathrm{L}^{\infty}(0,T;\H), \\
		\y^j&\xrightharpoonup{w}\ \y \ &&\text{ in }  \  \mathrm{L}^2(0,T;\V)\cap\mathrm{L}^{r+1}(0,T;\wi\L^{r+1}). 
	\end{aligned}\right.
\end{equation}
 We rewrite \eqref{4p1} as 
\begin{align*}
	\frac{\d \y^j}{\d t}+\mu\A(\y^j+\y_e)+\mathcal{B}(\y^j+\y_e )+\alpha(\y^j+\y_e)&+ \beta\mathcal{C}_1(\y^j+\y_e)+\gamma\mathcal{C}_2(\y^j+\y_e)+\mathfrak{F}\y^j\\+(\partial\I_{\mathcal{K}})_{\frac{1}{j}}(\y^j)&=\f^j+\mathscr{G}(\y_e), 
\end{align*}
where $\mathscr{G}(\cdot):=\mu\A +\mathcal{B}(\cdot)+\alpha\I+\beta\mathcal{C}_1(\cdot)+\gamma\mathcal{C}_2(\cdot)$. Let us now take the inner product  with $\A\y^j(\cdot)$ in \eqref{4p1} and rearranging terms, we get for a.e. $t\in[0,T]$
\begin{align*}
	&\frac{1}{2}\frac{\d}{\d t}\|\nabla\y^j(t)\|_{\H}^2+\mu\|\A(\y^j(t)+\y_e)\|_{\H}^2 +(\mathcal{B}(\y^j(t)+\y_e),\A(\y^j(t)+\y_e))+\alpha\|\nabla(\y^j(t)+\y_e)\|_{\H}^2\nonumber\\& +((\partial\I_{\mathcal{K}})_{\frac{1}{j}}(\y^j(t)),\A(\y^j(t)))+\beta(\mathcal{C}_1(\y^j(t)+\y_e),\A(\y^j(t)+\y_e))\nonumber\\& +\alpha(\mathcal{C}_2(\y^j(t)+\y_e),\A(\y^j(t)+\y_e))+(\mathfrak{F}\y^j(t),\A\y^j(t))\nonumber\\&=(\f^j(t),\A\y^j(t)) +(\mathscr{G}(\y_e),\A\y^j(t))+\mu(\A(\y^j(t)+\y_e),\A\y_e)+(\mathcal{B}(\y^j(t)+\y_e),\A\y_e)\nonumber\\&\quad+\beta(\mathcal{C}_1(\y^j(t)+\y_e),\A\y_e)+\gamma(\mathcal{C}_2(\y^j(t)+\y_e),\A\y_e).
\end{align*}
We consider the following cases:
\vskip 2mm
\noindent
\textbf{Case I:} \emph{$r>3$ for $d=2,3$.} By using the Cauchy Schwarz and Young's inequalities and calculations similar to \eqref{3.2.7.0}-\eqref{e6} yields 
\begin{align*}
	&\frac{1}{2}\frac{\d}{\d t}\|\nabla\y^j\|_{\H}^2+\frac{\mu}{4}\|\A(\y^j+\y_e)\|_{\H}^2 +\frac{\beta}{2}\||\y^j+\y_e|^{\frac{r-1}{2}}\nabla(\y^j+\y_e)\|_{\H}^2+\alpha\|\nabla(\y^j+\y_e)\|_{\H}^2\nonumber\\&\leq\frac{2}{\mu}\|\f^j\|_{\H}^2+ \frac{2}{\mu}\|\mathscr{G}(\y_e)\|_{\H}^2+\frac{\mu}{4}\|\A\y_e\|_{\H}^2+(\varrho_{\frac{1}{4}}+\eta_1+\eta_2)\|\nabla(\y^j+\y_e)\|_{\H}^2+\frac{4M^2}{\mu}\|\y^j\|_{\H}^2\nonumber\\&\quad+C\|\y^j+\y_e\|_{\H}^{\frac{3(r+1)-2q}{3(r+1)-3q+1}}+C\|\y^j+\y_e\|_{\wi\L^{r+1}}^{\frac{(r+1)(r+3)}{r+7}}.
\end{align*}
Integrating the above expression over $(\tau,t)$, for $\tau>0$, we obtain 
\begin{align}\label{4p2.1}
	&\|\nabla\y^j(t)\|_{\H}^2+\frac{\mu}{2}\int_\tau^{t} \|\A(\y^j(s)+\y_e)\|_{\H}^2 \d s+
	\beta\int_\tau^{t}\||\y^j(s)+\y_e|^{\frac{r-1}{2}}\nabla(\y^j(s)+\y_e)\|_{\H}^2 \d s \nonumber\\& +\alpha\int_{\tau}^t \|\nabla(\y^j(s)+\y_e)\|_{\H}^2\d s\nonumber\\&\leq
	\|\nabla\y^j(\tau)\|_{\H}^2+\frac{4}{\mu}\int_\tau^{t}\|\f^j(s)\|_{\H}^2\d s+ \frac{4C(t-\tau)}{\mu}\|\A\y_e\|_{\H}^2+\frac{\mu(t-\tau)}{4}\|\A\y_e\|_{\H}^2\nonumber\\&\quad+2(\varrho_{\frac{1}{4}}+\eta_1+\eta_2) \int_\tau^{t} \|\nabla(\y^j(s)+\y_e)\|_{\H}^2 \d s+\frac{4M^2}{\mu}\int_\tau^t \|\y^j(s)\|_{\H}^2\d s\nonumber\\&\quad+C\int_\tau^t \|\y^j(s)+\y_e\|_{\H}^{\frac{3(r+1)-2q}{3(r+1)-3q+1}} \d s
+C\left(\int_\tau^t \|\y^j(s)+\y_e\|_{\wi\L^{r+1}}^{r+1}\d s\right)^{\frac{r+3}{r+7}}(t-\tau)^{\frac{4}{r+7}},
\end{align}
for all $t\in[\tau,T].$ Using the energy estimate \eqref{4p2}, we find
\begin{align}\label{eeA}
	&\|\nabla\y^j(t)\|_{\H}^2+\frac{\mu}{2}\int_\tau^{t} \|\A(\y^j(s)+\y_e)\|_{\H}^2 \d s+
	\beta\int_\tau^{t}\||\y^j(s)+\y_e|^{\frac{r-1}{2}}\nabla(\y^j(s)+\y_e)\|_{\H}^2 \d s \nonumber\\&\leq
	\|\nabla\y^j(\tau)\|_{\H}^2+\frac{4}{\mu}\int_\tau^{t}\|\f^j(s)\|_{\H}^2\d s+ C(t-\tau)\|\A\y_e\|_{\H}^2+C+C(t-\tau)^{\frac{4}{r+7}}
	\nonumber\\& \leq C\left[\|\nabla\y^j(\tau)\|_{\H}^2+(t-\tau)+(t-\tau)^{\frac{4}{r+7}}+1\right],
\end{align}
where $C=C(\|\y_0\|_{\H},\|\f\|_{\mathrm{L}^2(0,T;{\H})},\|\A\y_e\|_{\H},T,M)>0$ is a constant. 
%By using Poincar\'e's inequality and \eqref{4p2} in \eqref{4p2.1}, we obtain 
%\begin{align}\label{4p2.a}
%	\|\y^j(t)+\y_e\|_{\V}^2&\leq\|\y^j(\tau)+\y_e\|_{\V}^2+\frac{4}{\mu}\int_\tau^{t}\|\f^j(s)\|_{\H}^2\d s +2(\eta+\eta_1)\int_\tau^{t}\|\y^j(s)+\y_e\|_{\V}^2 \d s\nonumber\\&\quad+
%	\frac{8M^2}{\lambda_1\mu}\int_\tau^{t}\|\y^j(s)+\y_e\|_{\V}^2\d s+ \frac{8(t-\tau)}{\mu}\|\mathrm{G}(\y_e)\|_{\H}^2\nonumber\\&\leq C+\|\y^j(\tau)+\y_e\|_{\V}^2+ \frac{8C(t-\tau)}{\mu}\|\mathrm{\A}(\y_e)\|_{\H}^2,
%\end{align}
  Now integrating \eqref{eeA} with respect to $\tau$ over $(0,t)$, we achieve
\begin{align*}
	t\|\nabla\y^j(t)\|_{\H}^2\leq C\left[\int_0^{t}\|\nabla\y^j(s)\|_{\H}^2\d s+t+ t^2 + t^{\frac{r+11}{r+7}}\right],
\end{align*}
which along with \eqref{4p2} gives
\begin{align}\label{4p2.2}
	\|\nabla\y^j(t)\|_{\H}^2&\leq C\left[1+\frac{1}{t}+t+t^{\frac{4}{r+7}}\right],
\end{align}
for all $t\in[\tau,T]$, where the constant $C=C(\|\y_0\|_{\H},\|\f\|_{\mathrm{L}^2(0,T;{\H})},\|\A\y_e\|_{\H},T,M,\mu)>0$ is independent of $j$. Therefore, from \eqref{4p2.1}-\eqref{4p2.2}, one can conclude that for all $t\in[\tau,T]$
	\begin{align}\label{4p2.b}
		&\|\nabla\y^j(t)\|_{\H}^2+\frac{\mu}{2}\int_\tau^{t} \|\A(\y^j(s)+\y_e)\|_{\H}^2 \d s+
		\beta\int_\tau^{t}\||\y^j(s)+\y_e|^{\frac{r-1}{2}}\nabla(\y^j(s)+\y_e)\|_{\H}^2 \d s \nonumber\\&\leq C\left[1+\frac{1}{\tau}+T+T^{\frac{4}{r+7}}\right].
	\end{align}
\vskip 2mm
\noindent
\textbf{Case II:} \emph{$d=r=3$ with $2\beta\mu>1$.} 
One can perform the similar calculations as we have done in \eqref{3.2.8.1}-\eqref{3.2.8.3} and then integrate over $(\tau,t)$ for $\tau>0$ to obtain 
\begin{align*}
	&\|\nabla\y^j(t)\|_{\H}^{2}+\mu(1-\theta)\int_\tau^{t} \|\A(\y^j(s)+\y_e)\|_{\H}^2 \d s +\frac{1}{2} \left(\beta-\frac{1}{2\theta\mu}\right)\int_\tau^{t}\||\y^j(s)+\y_e|\nabla(\y^j(s)+\y_e)\|_{\H}^2 \d s \nonumber\\&\leq C+\|\nabla\y^j(\tau)\|_{\H}^{2}+C(t-\tau)+C(t-\tau)^{\frac{3}{5}}\leq C\left[1+\frac{1}{\tau}+T+T^{\frac{2}{5}}\right],
\end{align*}
for all $t\in[\tau,T]$ and $0<\theta<1$, where we have used the energy estimates \eqref{4p2} for $r=3$. 
\vskip 2mm
\noindent
\textbf{\emph{Higher regularity estimates.}} Let us write the first equation of the system \eqref{4p1} for $t+h$ with $h>0$ as  
\begin{align*}
	&\frac{\d \y^j(t+h)}{\d t}+\mu\A\y^j(t+h)+\wi{\mathcal{B}}(\y^j(t+h))+\alpha\y^j(t+h)+ \beta\wi{\mathcal{C}}_1(\y^j(t+h))+\gamma\wi{\mathcal{C}}_2(\y^j(t+h))\nonumber\\&  +\mathfrak{F}\y^j(t+h)+(\partial\I_{\mathcal{K}})_{\frac{1}{j}}(\y^j(t+h))\nonumber\\&= \f^j(t+h), 
\end{align*}
for a.e. $t\in[0,T].$ Subtracting the above equation form the first equation of the system \eqref{4p1}, taking the inner product with 
$\y^j(\cdot+h)-\y^j(\cdot)$ and using the similar calculations as in \eqref{3.4}-\eqref{c2.2}, we derive  
\begin{align}\label{4pr}
	&\frac{1}{2}\frac{\d}{\d t}\|\y^j(t+h)-\y^j(t)\|_{\H}^2+\frac{\mu}{2}\|\nabla(\y^j(t+h)-\y^j(t))\|_{\H}^2 +\alpha\|\y^j(t+h)-\y^j(t)\|_{\H}^2\nonumber\\&\leq\|\f^j(t+h)-\f^j(t)\|_{\H}\|\y^j(t+h)-\y^j(t)\|_{\H} +\mathfrak{K}\|\y^j(t+h)-\y^j(t)\|_{\H}^2,
\end{align}
for a.e. $t\in[0,T]$, where $\mathfrak{K}:=M+\varrho_{\frac{1}{2}}+\rho_{1}+\rho_{\frac{1}{2}}$ and the constants $M$, $\varrho_{\frac{1}{2}}$, $\rho_{1}$, $\rho_{\frac{1}{2}}$ are same as given in \eqref{F_Oper}, \eqref{3.4}, \eqref{c2.2}, respectively. 
%the same constant as in Proposition \ref{prop3.1}. %One can rewrite \eqref{4pr} as
%\begin{align*}
%	\frac{1}{2}\frac{\d}{\d t}\|\y^j(t+h)-\y^j(t)\|_{\H}^2\leq \|\f^j(t+h)-\f^j(t)\|_{\H}\|\y^j(t+h)-\y^j(t)\|_{\H}+\mathfrak{K}\|\y^j(t+h)-\y^j(t)\|_{\H}^2,
%\end{align*}
%which also implies that
It is immediate from \eqref{4pr} that 
\begin{align*}
	\frac{\d}{\d t}\|\y^j(t+h)-\y^j(t)\|_{\H}\leq \|\f^j(t+h)-\f^j(t)\|_{\H}+\mathfrak{K}\|\y^j(t+h)-\y^j(t)\|_{\H},
\end{align*}
for a.e. $t\in[0,T].$ Using the Gronwall inequality, we obtain 
\begin{align}\label{5pr}
	\|\y^j(t+h)-\y^j(t)\|_{\H}\leq e^{\mathfrak{K}(t-\tau)}\left(\|\y^j(\tau+h)-\y^j(\tau)\|_{\H}+
	\int_\tau^t \|\f^j(s+h)-\f^j(s)\|_{\H} \d s\right),
\end{align}
for all $t\in[\tau,T].$ Dividing \eqref{5pr} by $h$ and taking limit as $h\to0$, we find
\begin{align}\label{4pr1}
	\left\|\frac{\d^+\y^j(t)}{\d t}\right\|_{\H}&\leq e^{\mathfrak{K}T}\left(\left\|\frac{\d^+\y^j(\tau)}{\d t}\right\|_{\H}+\int_0^T\left\|\frac{\d\f^j(s)}{\d t}\right\|_{\H}\d s\right). 
\end{align}
%where we have used the fact that that $\f^j\in\W^{1,1}(0,T;\H)$ implies that $\f^j\in\C([0,T];\H).$ 
for all $t\in[\tau,T].$ Now, integrating \eqref{4pr1} with respect to $\tau$ on $(0,t)$, we derive
\begin{align*}
	\left\|\frac{\d^+\y^j(t)}{\d t}\right\|_{\H}\leq\frac{e^{\mathfrak{K}T}}{t} \left(\int_0^t\left\|\frac{\d^+\y^j(\tau)}{\d t}\right\|_{\H}\d\tau\right)+e^{\mathfrak{K}T} \int_0^T \left\|\frac{\d\f^j(s)}{\d t}\right\|_{\H}\d s,
\end{align*}
which implies  
\begin{align*}
	\left\|\frac{\d^+\y^j(t)}{\d t}\right\|_{\H}\leq\frac{e^{\mathfrak{K}T}}{\tau} \left(\int_0^t\left\|\frac{\d^+\y^j(\tau)}{\d t}\right\|_{\H}\d\tau\right)+e^{\mathfrak{K}T} \int_0^T \left\|\frac{\d\f^j(s)}{\d t}\right\|_{\H}\d s.
\end{align*}
Again we apply the Gronwall inequality to obtain
\begin{align}\label{4pr1.1}
	\left\|\frac{\d^+\y^j(t)}{\d t}\right\|_{\H}\leq\left(e^{\mathfrak{K}T} \int_0^T \left\|\frac{\d\f^j(s)}{\d s}\right\|_{\H}\d s\right)\exp \left(\frac{Te^{\mathfrak{K}T}}{\tau}\right),
\end{align}
for all $t\in[\tau,T].$ For $r=3$ with $2\beta\mu>1$, from Remark \ref{RK}, one obtains for a.e. $t\in[\tau,T]$
\begin{align*}
	&\frac{\d}{\d t}\|\y^j(t+h)-\y^j(t)\|_{\H}^2+\alpha\|\y^j(t+h)-\y^j(t)\|_{\H}^2\nonumber\\&+ \left(\beta-\frac{1}{2\mu}\right)\bigg[\||\y^j(t+h)+\y_e|(\y^j(t+h)-\y^j(t))\|_{\H}^2+\frac{1}{2}\||\y^j(t)+\y_e|(\y^j(t+h)-\y^j(t))\|_{\H}^2\bigg]\nonumber\\&\leq
	\|\f^j(t+h)-\f^j(t)\|_{\H}\|\y^j(t+h)-\y^j(t)\|_{\H}+\mathfrak{K}_1\|\y^j(t+h)-\y^j(t)\|_{\H}^2,
\end{align*}
where $\mathfrak{K}_1=M+\wi\rho_1+\wi\rho_2$. Then by performing similar calculations as we have done for $r>3$, we obtain
\begin{align}\label{4pr.11}
		\left\|\frac{\d^+\y^j(t)}{\d t}\right\|_{\H}\leq\left(e^{\mathfrak{K}_1T} \int_0^T \left\|\frac{\d\f^j(s)}{\d t}\right\|_{\H}\d s\right)\exp \left(\frac{Te^{\mathfrak{K}_1T}}{\tau}\right),
\end{align}
for all $t\in[\tau,T].$
\vskip 2mm
\noindent
\textbf{\emph{Uniform bounds for operators.}} 
 Using Agmon's and Young's inequalities, we calculate
\begin{align*}
\|\mathcal{B}(\y^j+\y_e)\|_{\H}^2&\leq C\left[	\|\y^j+\y_e\|_{\H}^{1-\frac{d}{4}}\|\nabla(\y^j+\y_e)\|_{\H}\|\y^j+\y_e\|_{\H^2_\mathrm{p}}^{\frac{d}{4}}\right]^2\nonumber\\&\leq C\left[
	\|\y^j+\y_e\|_{\H}^{1-\frac{d}{4}}\|\nabla(\y^j+\y_e)\|_{\H}(\|\A(\y^j+\y_e)\|_{\H}^{\frac{d}{4}}+\|\y^j+\y_e\|_{\H}^{\frac{d}{4}})\right]^2\nonumber\\&\leq C\left[
	\|\y^j+\y_e\|_{\H}^{2-\frac{d}{2}}\|\nabla(\y^j+\y_e)\|_{\H}^2\|\A(\y^j+\y_e)\|_{\H}^{\frac{d}{2}}
	+\|\y^j+\y_e\|_{\H}^2\|\nabla(\y^j+\y_e)\|_{\H}^2\right]\nonumber\\&\leq C\left[\|\nabla(\y^j+\y_e)\|_{\H}^2(\|\y^j+\y_e\|_{\H}^2+\|\A(\y^j+\y_e)\|_{\H}^2)\right].
\end{align*}
In view of \eqref{4p2} and \eqref{4p2.b}, we obtain
\begin{align}\label{4p4.3}
	\int_\tau^{t} \|\mathcal{B}(\y^j(s)+\y_e)\|_{\H}^2\d s&\leq
	C\left(1+\frac{1}{\tau}+t+t^{\frac{4}{r+7}}\right)^2,
\end{align}
for all $t\in[\tau,T].$ Moreover, by interpolation inequality, we infer
\begin{align*}
	\|\mathcal{C}_1(\y^j+\y_e)\|_{\H}^{\frac{r+1}{r}}&\leq\|\y^j+\y_e\|_{\wi\L^{2r}}^{r+1}\leq
	\|\y^j+\y_e\|_{\H}^{\frac{(r+3)(r+1)}{r(3r+1)}}\|\y^j+\y_e\|_{\wi\L^{3(r+1)}}^{\frac{3(r-1)(r+1)}{r(3r+1)}}.
\end{align*}
Applying H\"older's inequality, and using the energy estimate \eqref{4p2} and \eqref{4p2.b}, we find
\begin{align}\label{4p4.4}
	\int_\tau^t \|\mathcal{C}_1(\y^j(s)+\y_e)\|_{\H}^{\frac{r+1}{r}}\d s\leq C\left(\int_\tau^t \|\y^j(s)+\y_e\|_{\wi\L^{3(r+1)}}^{r+1}\d s\right)^ {\frac{3(r-1)}{r(3r+1)}}\leq C
	\left(1+\frac{1}{\tau}+t+t^{\frac{4}{r+7}}\right),
\end{align}
for all $t\in[\tau,T],$ where we have used the fact that $\frac{3(r-1)}{r(3r+1)}<1.$ Again, by interpolation inequality, we calculate
\begin{align*}
	\|\mathcal{C}_2(\y^j+\y_e)\|_{\H}^{\frac{q+1}{q}}&\leq\|\y^j+\y_e\|_{\wi\L^{2q}}^{q+1}\leq\|\y^j+\y_e\|_{\H}^{\frac{(q+1)(3(r+1)-2q)}{q(3(r+1)-2)}} \|\y^j+\y_e\|_{\widetilde{\L}^{3(r+1)}}^{\frac{3(r+1)(q^2-1)}{q(3(r+1)-2)}}.
\end{align*} 
Therefore, similar to \eqref{4p4.4}, we find
\begin{align}\label{4p4.5}
\int_\tau^t \|\mathcal{C}_2(\y^j(s)+\y_e)\|_{\H}^{\frac{q+1}{q}}\d s\leq C\left(1+\frac{1}{\tau}+t+t^{\frac{4}{r+7}}\right),
\end{align}
for all $t\in[\tau,T]$, where we have used $\frac{3(q^2-1)}{q(3r+1)}<1.$ Thus by using \eqref{4pr1.1} (for $r>3$), \eqref{4pr.11} (for $d=r=3$ with $2\beta\mu>1$) and \eqref{4p4.3}-\eqref{4p4.5}, we finally conclude from \eqref{4p1.1} that 
\begin{align}\label{4p4.6}
	\int_{\tau}^t \|(\partial\I_{\mathcal{K}})_{\frac{1}{j}}(\y^j(s))\|_{\H}^{\frac{r+1}{r}}\d s\leq C\left(1+\frac{1}{\tau}+t+t^{\frac{4}{r+7}}\right)^2,
\end{align}
for all $t\in[\tau,T].$ Let us rewrite the system \eqref{4p1} as
\begin{equation}\label{mb1}
	\left\{
	\begin{aligned}
		\frac{\d \y^j(t)}{\d t}+\mathscr{F}(\y^j(t))+(\partial\I_{\mathcal{K}})_{\frac{1}{j}}(\y^j(t))&= \f^j(t), \ \text{ a.e. } \ t\in[0,T],  \\
		\y^j(0)&=\y_0^j. 
	\end{aligned}
	\right.
\end{equation} 
where $\mathscr{F}(\cdot):=\mu\A+\wi{\mathcal{B}}+\alpha\I+\beta\wi{\mathcal{C}}_1(\cdot)+\gamma\wi{\mathcal{C}}_2(\cdot)+\mathfrak{F}$. From \eqref{4p4.3}-\eqref{mb1}, we find
\begin{align}\label{mb6}
	\int_\tau^t \|\mathscr{F}(\y^j(s))\|_{\H}^{\frac{r+1}{r}}\d s\leq C.
\end{align}
From \eqref{4p4.6}-\eqref{mb6}, making use of the Banach-Alaoglu theorem, we have  the following weak convergences as $j\to\infty$:
\begin{equation}\label{wc2}
	\left\{
	\begin{aligned}
		\mathscr{F}(\y^j)&\xrightharpoonup{w}\mathscr{F}_0 \  &&\text{in} \ \mathrm{L}^{\frac{r+1}{r}}(\tau,T;\H)\\
	(\partial\I_{\mathcal{K}})_{\frac{1}{j}}(\y^j)&\xrightharpoonup{w}\phi \ &&\text{ in } \ \mathrm{L}^\frac{r+1}{r}(\tau,T;\H),\\
	\frac{\d\y^j}{\d t}&\xrightharpoonup{w} \ \frac{\d\y}{\d t} \ &&\text{ in } \ 
	\mathrm{L}^{\frac{r+1}{r}}(\tau,T;\H).
	\end{aligned}\right.
\end{equation}
\vskip 2mm
\noindent
\textbf{\emph{Strong convergences}.} 
Let $\{\y^{j_k}_0\}_{k\geq1}$ and $\{\y^{j_l}_0\}_{l\geq1}$ be two subsequences of $\{\y^j_0\}_{j\geq1}$ such that \eqref{4p1} holds for $\y^{j_k}(\cdot)$ and $\y^{j_l}(\cdot)$ with the initial data $\y^{j_k}(0)=\y^{j_k}_0$ and $\y^{j_l}(0)=\y^{j_l}_0$, respectively and external forcing $\f^{j_k}(\cdot)$ and $\f^{j_l}(\cdot)$, respectively, where $\{\f^{j_k}(\cdot)\}_{k\geq1}$ and $\{\f^{j_l}(\cdot)\}_{l\geq1}$ are two subsequences of $\{\f^j(\cdot)\}_{j\geq1}$ such that $\f^{j_k}\to\f$ and $\f^{j_k}\to\f$ in $\mathrm{L}^2(0,T;\H).$ Note that both $\y^{j_k}$ and $\y^{j_l}$  have the regularity given in \eqref{regularity}. Then from \eqref{density}, we have
\begin{align}\label{density1}
	\y^{j_k}_0\to\y_0 \ \text{ and } \  \y^{j_l}_0\to\y_0 \ \text{ in } \ \H.
\end{align}
Moreover, for a.e. $t\in[0,T]$, we have 
\begin{align*}
	&\frac{\d}{\d t}(\y^{j_k}(t)-\y^{j_l}(t))+ \mu(\A(\y^{j_k}(t)+\y_e)-\A(\y^{j_l}(t)+\y_e))+
	\alpha(\y^{j_k}(t)-\y^{j_l}(t))\nonumber\\& +\beta(\mathcal{C}_1(\y^{j_k}(t)+\y_e)-\mathcal {C}_1(\y^{j_l}(t)+\y_e))+((\partial\I_{\mathcal{K}})_{\frac{1}{j_k}}(\y^{j_k}(t))-(\partial\I_{\mathcal{K}})_{\frac{1}{j_l}}(\y^{j_l}(t))\nonumber\\&= \f^{j_k}(t)-\f^{j_l}(t)-(\mathcal{B}(\y^{j_k}(t)+\y_e) -\mathcal{B}(\y^{j_l}(t)+\y_e))\\&\nonumber\quad-\gamma(\mathcal{C}_2(\y^{j_k}(t)+\y_e)-\mathcal{C}_2(\y^{j_l}(t)+\y_e))-\mathfrak{F}(\y^{j_k}(t)-\y^{j_l}(t)).
\end{align*} 
Taking the inner product with $\y^{j_k}(\cdot)-\y^{j_l}(\cdot)$, using the monotonicity of $(\partial\I_{\mathcal{K}})(\cdot)$ and performing the calculations as we have done in \eqref{3.4}-\eqref{c2.2}  leads to
\begin{align}\label{4p4.13}
	&\frac{1}{2}\frac{\d}{\d t}\|\y^{j_k}-\y^{j_l}\|_{\H}^2 +\frac{\mu}{2}\|\nabla(\y^{j_k}-\y^{j_l})\|_{\H}^2
	+\alpha\|\y^{j_k}-\y^{j_l}\|_{\H}^2+
	\frac{\beta}{4}\||\y^{j_k}+\y_e|^{\frac{r-1}{2}}(\y^{j_k}-\y^{j_l})\|_{\H}^2\nonumber\\&+\frac{\beta}{4}\||\y^{j_l}+\y_e|^{\frac{r-1}{2}}(\y^{j_k}-\y^{j_l})\|_{\H}^2 \nonumber\\ &\leq
	\frac{1}{2}\|\f^{j_k}-\f^{j_l}\|_{\H}^2+\frak{a}\|\y^{j_k}-\y^{j_l}\|_{\H}^2,
\end{align}
where  $\frak{a}>0$ is a constant independent of $j_k$ and $j_l$. Using \eqref{C1}, we find from \eqref{4p4.13} that 
\begin{align}\label{4p4.14}
	&\frac{1}{2}\frac{\d}{\d t}\|\y^{j_k}(t)-\y^{j_l}(t)\|_{\H}^2+ \frac{\mu}{2}\|\nabla(\y^{j_k}(t)-\y^{j_l}(t))\|_{\H}^2+ \frac{\beta}{2^{r}}\|\y^{j_k}(t)-\y^{j_l}(t)\|_{\wi\L^{r+1}}^{r+1}\nonumber\\&\leq
	\frac{1}{2}\|\f^{j_k}(t)-\f^{j_l}(t)\|_{\H}^2+\frak{a}\|\y^{j_k}(t)-\y^{j_l}(t)\|_{\H}^2,	
\end{align}
for a.e. $t\in[0,T]$. Now integrating \eqref{4p4.14} over $(0,t)$,  we obtain 
\begin{align*}
	& \|\y^{j_k}(t)-\y^{j_l}(t)\|_{\H}^2+\mu\int_0^t\|\nabla(\y^{j_k}(s)-\y^{j_l}(s))\|_{\H}^2\d s +\frac{\beta}{2^{r-1}} \int_0^t \|\y^{j_k}(s)-\y^{j_l}(s)\|_{\wi\L^{r+1}}^{r+1} \d s \nonumber\\&\leq\|\y^{j_k}(0)-\y^{j_l}(0)\|_{\H}^2+
	\int_0^t \|\f^{j_k}(s)-\f^{j_l}(s)\|_{\H}^2\d s+2\mathfrak{a}\int_0^t \|\y^{j_k}(s)-\y^{j_l}(s)\|_{\H}^2\d s,
\end{align*}
for all $t\in[0,T]$. Applying the Gronwall inequality, we obtain 
\begin{align}\label{Cauchy}
	& \|\y^{j_k}(t)-\y^{j_l}(t)\|_{\H}^2+\mu\int_0^t\|\nabla(\y^{j_k}(s)-\y^{j_l}(s))\|_{\H}^2\d s +\frac{\beta}{2^{r-1}} \int_0^t \|\y^{j_k}(s)-\y^{j_l}(s)\|_{\wi\L^{r+1}}^{r+1} \d s\nonumber\\&\leq
	C\left(\|\y^{j_k}(0)-\y^{j_l}(0)\|_{\H}^2+\int_0^t \|\f^{j_k}(s)-\f^{j_l}(s)\|_{\H}^2\d s\right),
\end{align}
for all $t\in[0,T]$ and for all $j,k\in\N.$  This shows that the sequence $\{\y^{j_k}(t)\}$ for all $t\in[0,T]$ is a uniformly Cauchy sequence in $\H$ and since $\H$ is complete, we obtain the following uniform convergence:
\begin{align}\label{stc1}
	\y^{j_k}(t)\to\y (t)\   \text{ in } \  \H \  \mbox{ for all }\ t\in[0,T]. 
\end{align}
Since $\y^{j_k}\in\C([0,T];\H)$, the uniform convergence implies $\y\in\C([0,T];\H)$. By similar reasoning as above, we obtain from \eqref{Cauchy} that
\begin{align*}
\y^{j_k}\to\y \  \ \text{in} \  \mathrm{L}^2(0,T;\V) \ \text{ and } \  \y^{j_k}\to\y \  \ \text{in} \  \mathrm{L}^{r+1}(0,T;\wi\L^{r+1}).
\end{align*}
Note that from \eqref{stc1}, by continuity,  we have following results:
\begin{align*}
	\y^{j_k}(0)\to\y(0) \  \text{ in } \ \H \ \text{ and } \ \y^{j_k}(T)\to\y(T) \  \text{ in } \ \H.
\end{align*}

Let us now  prove that $\phi(t)\in\partial\I_{\mathcal{K}}(\y(t))$ for a.e. $t\in[\tau,T]$, for any $\tau>0$. 
From \cite[Proposition 1.4, part(i), pp. 52]{VB2}, we know that $\left(\I+\frac{1}{j}\partial\I_{\mathcal{K}}\right)^{-1}$ is nonexpansive, that is, Lipschitz continuous with the Lipschitz constant $1$ and from \cite[Proposition 1.3, part (iii), pp. 49]{VB2}, we have
\begin{align*}
	\int_0^T \bigg\|\left(\I+\frac{1}{j}\partial\I_{\mathcal{K}}\right)^{-1}(\y^j(t))-\y(t)\bigg\|_{\H}^2\d t&\leq 2\int_0^T\bigg\|\left(\I+\frac{1}{j}\partial\I_{\mathcal{K}}\right)^{-1}(\y^j(t))-\left(\I+\frac{1}{j}\partial\I_{\mathcal{K}}\right)^{-1}(\y(t))\bigg\|_{\H}^2\d t\nonumber\\&\quad+ 2\int_0^T\bigg\|\left(\I+\frac{1}{j}\partial\I_{\mathcal{K}}\right)^{-1}(\y(t))-\y(t)\bigg\|_{\H}^2\d t\nonumber\\&\leq 2\int_0^T\|\y^j(t)-\y(t)\|_{\H}^2\d t+ 2\int_0^T\bigg\|\left(\I+\frac{1}{j}\partial\I_{\mathcal{K}}\right)^{-1}(\y(t))-\y(t)\bigg\|_{\H}^2\d t\nonumber\\&\to 0\ \text{ as }\ j\to\infty,
\end{align*}
so  that $\left(\I+\frac{1}{j}\partial\I_{\mathcal{K}}\right)^{-1}(\y^j)\to \y$ in $\mathrm{L}^2(0,T;\H)$ and $\left(\I+\frac{1}{j}\partial\I_{\mathcal{K}}\right)^{-1}(\y^j(t))\to \y(t)$, for a.e. $t\in[0,T]$ in $\H$ (along a subsequence). Moreover, the maximal monotone operator $\partial\I_{\mathcal{K}}(\cdot)$ is weak-strong and strong-weak closed in $\H\times\H$ (see \cite[Proposition 1.1, part (i), pp. 37]{VB2} and  \cite[Proposition 1.7]{JPSS}), which means that if 
\begin{align*}
	(\partial\I_{\mathcal{K}})_{\frac{1}{j}}(\y^j)&\in\partial\I_{\mathcal{K}}\left(\left(\I+\frac{1}{j}\partial\I_{\mathcal{K}}\right)^{-1}(\y^j)\right),\\
	\left(\I+\frac{1}{j}\partial\I_{\mathcal{K}}\right)^{-1}(\y^j)&\to\y \ \text{ in }\  \mathrm{L}^2(0,T;\H)\ \text{ and }
	(\partial\I_{\mathcal{K}})_{\frac{1}{j}}(\y^j)\xrightharpoonup{w}\ \phi  \ \text{ in } \ \mathrm{L}^{\frac{r+1}{r}}(\tau,T;\H), 
\end{align*}
then $\phi\in\partial\I_\mathcal{K}(\y)$ for a.e. $t\in[\tau,T]$  in $\H$. 

\vskip 2mm
\noindent
\textbf{\emph{Passing limit via Minty-Browder trick}.} We pass weak limit in \eqref{mb1} to obtain
\begin{equation*}
	\left\{
	\begin{aligned}
		\frac{\d \y}{\d t}+\mathscr{F}_0+\phi(t)&= \f(t), \ \text{ in } \ \mathrm{L}^{\frac{r+1}{r}}(\tau,T;\H)  \\
		\y(0)&=\y_0,
	\end{aligned}
	\right.
\end{equation*} 
for some $\phi\in\partial\I_\mathcal{K}(\y)$. Our aim is to now show that $\mathscr{F}_0(t)=\mathscr{F}(\y(t))$ for a.e. $t\in[\tau,T]$. Using  \eqref{wc2}, \eqref{stc1} and the fact that $\partial\I_{\mathcal{K}}$ is weak-strong and strong-weak closed, we calculate  (along a subsequence)
\begin{align*}
	&\left|\int_\tau^t ((\partial\I_{\mathcal{K}})_{\frac{1}{j}}(\y^j(s)),\y^j(s))\d s-\int_\tau^t (\phi(s),\y(s))\d s\right|
	\nonumber\\&\leq\left|\int_\tau^t ((\partial\I_{\mathcal{K}})_{\frac{1}{j}}(\y^j(s))-\phi(s),\y^j(s)-\y(s))\d s \right|+\left|\int_\tau^t ((\partial\I_{\mathcal{K}})_{\frac{1}{j}}(\y^j(s))-\phi(s),\y(s))\d s \right|\nonumber\\&\quad+\left|\int_\tau^t (\phi(s),\y^j(s)-\y(s))\d s\right|\nonumber\\&\to0  \ \ \text{ as } \  j\to\infty,
\end{align*}
%which implies     
%\begin{align*}
%	\int_\tau^t ((\partial\I_{\mathcal{K}})_{\frac{1}{j}}(\y^j(s)),\y^j(s))\d s\to\int_\tau^t (\phi(s),\y(s))\d s, \  \  \text{as} \ j\to\infty,
%\end{align*}
for all $t\in[\tau,T]$ and $\phi\in\partial\I_{\mathcal{K}}(\y)$. Since the map $t\mapsto\|\y(t)\|_{\H}^2$ is absolutely continuous (cf. \eqref{wc1} and \eqref{wc2}) on the interval $[\tau,T]$ with $\tau>0$, we have the following energy equality:
		\begin{align}\label{engeq1}
				\|\y(t)\|_{\H}^2+2\int_\tau^t(\mathscr{F}_0(s)-\f(s),\y(s))\d s+
				2\int_\tau^t(\phi(s),\y(s))\d s=\|\y(\tau)\|_{\H}^2,
		\end{align}
for all $t\in[\tau,T]$ with $\tau>0$.	Taking the inner product with  $\y^j(\cdot)$ in the first equation of the system \eqref{mb1}, we derive the following energy equality: 
	\begin{align}\label{mb5}
		\|\y^j(t)\|_{\H}^2+2\int_\tau^t(\mathscr{F}(\y^j(s))-\f^j(s),\y^j(s))\d s+
		2\int_\tau^t((\partial\I_{\mathcal{K}})_{\frac{1}{j}}(\y^j(s)),\y^j(s))\d s=\|\y^j(\tau)\|_{\H}^2,
	\end{align}
	for all $t\in[\tau,T]$ with $\tau>0$. 
		Taking the limit supremum in \eqref{mb5}, we obtain
		\begin{align}\label{mb7}
			&\limsup\limits_{j\to\infty}\int_\tau^t(\mathscr{F}(\y^j(s)),\y^j(s))\d s\nonumber\\&= \limsup\limits_{j\to\infty}\left[\frac{1}{2}\|\y^j(\tau)\|_{\H}^2-\frac{1}{2}\|\y^j(t)\|_{\H}^2+\int_\tau^t(\f^j(s),\y^j(s))\d s-\int_\tau^t((\partial\I_{\mathcal{K}})_{\frac{1}{j}}(\y^j(s)),\y^j(s))\d s\right] \nonumber\\&=
			\frac{1}{2}\|\y(\tau)\|_{\H}^2-\frac{1}{2}\|\y(t)\|_{\H}^2+\limsup\limits_{j\to\infty}\int_\tau^t(\f^j(s),\y^j(s))\d s -\liminf\limits_{j\to\infty} \int_\tau^t((\partial\I_{\mathcal{K}})_{\frac{1}{j}}(\y^j(s)),\y^j(s))\d s\nonumber\\&=\frac{1}{2}\|\y(\tau)\|_{\H}^2-\frac{1}{2}\|\y(t)\|_{\H}^2+\int_\tau^t(\f(s),\y(s))\d s - \int_\tau^t(\phi(s),\y(s))\d s=\int_\tau^t(\mathscr{F}_0(s),\y(s))\d s,
		\end{align}
		for all $t\in[\tau,T]$, where we have used \eqref{engeq1} in the final equality. One can easily see from Proposition \ref{prop3.1} that the operator  $\mathscr{F}(\cdot)$ is quasi-$m$-accretive. Therefore, we write
		\begin{align}\label{531}
			\int_\tau^T (\mathscr{F}(\v(t))-\mathscr{F}(\y^j(t)),\v(t)-\y^j(t))\d t+\mathfrak{K}\int_\tau^T \|\v(t)-\y^j(t)\|_{\H}^2\d t\geq0,
		\end{align}
	where $\mathfrak{K}$ is same as in \eqref{4pr}. On taking the limit supremum in \eqref{531}  and using \eqref{mb7}, we obtain 
		\begin{align}\label{532}
			\int_\tau^T (\mathscr{F}(\v(t))-\mathscr{F}_0(t),\v(t)-\y(t))\d t+\mathfrak{K}\int_\tau^T \|\v(t)-\y(t)\|_{\H}^2\d t\geq0,
		\end{align}
 for all $\v\in\mathrm{L}^2(0,T;\H)$. Substituting $\v=\y+\lambda\w$ (for $\w\in\mathrm{L}^2(0,T;\H)$) in \eqref{532} and dividing by $\lambda$ result to
\begin{align*}
	\int_\tau^T (\mathscr{F}(\y(t)+\lambda\w(t))-\mathscr{F}_0(t),\w(t))\d t\geq0.
\end{align*}
 Since $\mathscr{F}(\cdot)$ is hemicontinuous, we conclude that $\mathscr{F}_0(t)=\mathscr{F}(\y(t))$, for a.e. $t\in[\tau,T]$ in $\H$. 

\vskip 2mm
\noindent
\textbf{\emph{Initial data}.} 
We want to show that \begin{align}\label{ID}
	\lim\limits_{\tau\to0^+}\|\y(\tau)-\y_0\|_{\H}=0.
\end{align} Let $\epsilon>0$ be given.  Note that from \eqref{stc1}, one can conclude by the continuity that $\y^{j_k}(0)\to\y(0)$ in $\H$. This means, there is a sufficiently large $k_1\in\N$ such that 
\begin{align*}
	\|\y^{j_k}(0)-\y(0)\|_{\H}<\frac{\epsilon}{3} \ \text{ for all } \  k\geq k_1.
\end{align*}
Similarly, from \eqref{density1}, there exists sufficiently large $k_2\in\N$ such that 
\begin{align*}
    \|\y^{j_k}(0)-\y_0\|_{\H}<\frac{\epsilon}{3} \  \text{ for all } \ k\geq k_2.
\end{align*}
Using the triangle inequality, one can calculate
\begin{align*}
		\|\y(\tau)-\y_0\|_{\H}\leq\|\y(\tau)-\y(0)\|_{\H}+\|\y^{j_k}(0)-\y(0)\|_{\H}+\|\y^{j_k}(0)-\y_0\|_{\H}.
\end{align*}
Since $\y\in\C([0,T];\H)$ (cf. \eqref{stc1}), it is clear that the first term in the right hand side of above inequality approaches to $0$ as $\tau\to0$ from right. Therefore, for any $\epsilon>0$, there exists some $\delta>0$ such that for all $k\geq k_0:=\max\{k_1,k_2\}$, we have following implication from above inequality  
\begin{align*}
	\|\y(\tau)-\y_0\|_{\H}<\epsilon \ \text{ if } \  0<\tau<\delta,
\end{align*}
which completes the proof of \eqref{ID}. 
\vskip 2mm
\noindent
\textbf{\emph{Uniqueness}.} Assume that $\y_1(\cdot)$ and $\y_2(\cdot)$ be any two solutions of the system \eqref{4p1} satisfying the initial data $\y_1(0)=\y_2(0)=\y_0$, respectively.
%\begin{align}\label{ID-2}
%	\lim\limits_{\tau\to0^+}\|\y_1(\tau)-\y_0\|_{\H}=0\lim\limits_{\tau\to0^+}\|\y_2(\tau)-\y_0\|_{\H}.
%\end{align}
 Then one can obtain
\begin{align*}
	&\frac{\d}{\d t}(\y_1(t)-\y_2(t))+\mu(\A\y_1(t)-\A\y_2(t))+\alpha(\y_1(t)-\y_2(t)) \nonumber\\& +
	\beta(\mathcal{C}_1(\y_1(t)+\y_e)-\mathcal {C}_1(\y_2(t)+\y_e)) +\xi(\y_1(t))-\xi(\y_2(t))\nonumber\\&=(\mathcal{B}(\y_1(t)+\y_e) -\mathcal{B}(\y_2(t)+\y_e))-\gamma(\mathcal{C}_2(\y_1(t)+\y_e)-\mathcal{C}_2(\y_2(t)+\y_e))-\mathfrak{F}(\y_1(t)-\y_2(t)), 
\end{align*}
for a.e. $t\in[\tau,T]$. Taking the inner product with $\y_1(\cdot)-\y_2(\cdot)$ and doing similar calculations as in \eqref{4p4.13}-\eqref{4p4.14} imply that 
\begin{align*}
&\frac12 \frac{\d}{\d t}\|\y_1(t)-\y_2(t)\|^2_{\H}+\mu \|\nabla(\y_1(t)-\y_2(t))\|_{\H}^2+\alpha\|\y_1(t)-\y_2(t)\|_{\H}^2 +\frac{\beta}{2^{r}}\|\y_1(t)-\y_2(t)\|_{\wi\L^{r+1}}^{r+1}\nonumber\\& \leq\mathfrak{a}\|\y_1(t)-\y_2(t)\|_{\H}^2,
\end{align*}
for a.e. $t\in[\tau,T].$
Thus by the Gronwall inequality, we obtain
\begin{align*}
\|\y_1(t)-\y_2(t)\|_{\H}\leq\|\y_1(\tau)-\y_2(\tau)\|_{\H}e^{\mathfrak{a}(t-\tau)}\leq
(\|\y_1(\tau)-\y_0\|_{\H}+\|\y_2(\tau)-\y_0\|_{\H})e^{\mathfrak{a}(t-\tau)}.
\end{align*}
In view of \eqref{ID}, we complete the proof of uniqueness.

\section{Stability}\label{stb}\setcounter{equation}{0}
In this section, we discuss stabilization of CBFeD system by means of finite- and infinite-dimensional feedback controllers. 
\subsection{\textbf{\emph{Infinite-dimensional feedback controllers}}}\label{infinite}
Let us consider the following controlled CBFeD system for a.e. $t>0$:
\begin{equation}\label{appl1}
	\left\{
	\begin{aligned}
		\frac{\d \y(t)}{\d t}+\mu\A\y(t)+ \mathcal{B}(\y(t))+\alpha\y(t) +\beta\mathcal{C}_1(\y(t))+\gamma\mathcal{C}_2(\y(t))&= \f_e+\mathbf{U}(t),\\ 
		\y(0)&=\y_0,
	\end{aligned}
	\right.
\end{equation}	
where $\mathbf{U}(\cdot)$ is a distributed control acting on the whole torus. Let $\y_e\in\D(\A)$ be the steady state (equilibrium) solution of the stationary equation
\begin{equation}\label{appl2}
	\mu\A\y_e+\mathcal{B}(\y_e)+\alpha\y(t)+\beta\mathcal{C}_1(\y_e)+\gamma\mathcal{C}_2(\y_e)= \f_e \  \text{ in } \ \mathbb{T}^d,
\end{equation}	
whose solvability is discussed in Appendix \ref{SP} (see Theorem \ref{STT} below). Let $\mathcal{K}\subset\H$ satisfy  Hypothesis \ref{AssupK}. Our aim is to find  those feedback controllers which stabilize the equlibrium solution $\y_e$ of \eqref{appl1} exponentially, under some invariance condition. So, we would like to find a feedback controller, say $\mathbf{U}(\cdot)$, such that following conditions meet:
\begin{align*}
\lim\limits_{t\to\infty}e^{\delta t}\|\y(t)-\y_e\|_{\H}=0, \ \text{for some}\ \delta>0, 
\end{align*}
and
\begin{align*}
	\y(t)-\y_e\in\mathcal{K} \  \text{for all} \  t\in[0,T].
\end{align*}
%Let us set $\z(\cdot)=\y(\cdot)-\y_e$. We consider a following system which is the difference of \eqref{appl1}-\eqref{2}
%\begin{equation}\label{appl3}
%	\left\{
%	\begin{aligned}
%		\frac{\d \z(t)}{\d t}+\mu\A\z(t)+\wi{\mathcal{B}}(\z(t))+ \beta\wi{\mathcal{C}}_1(\z(t))+ \alpha\wi{\mathcal{C}}_2(\z(t))+\theta\z(t)+(\partial\I_{\mathcal{K}})(\z(t))&\ni\boldsymbol{0},\ \text{for a.e.} \ t>0,\\ 
%		\z(0)&=\y_0-\y_e.
%	\end{aligned}
%	\right.
%\end{equation}	
%From \cite[Proposition 2.3, pp. 251]{VB2}, we know that the feedback law  $\mathbf{U}(t)\in-\theta\z(t)-(\partial\I_{\mathcal{K}})(\z(t))$ asserts that above system has support in [0,T].

Let us set $\z(\cdot)=\y(\cdot)-\y_e$ and we take $\mathbf{U}(t)\in-\theta\z(t)-(\partial\I_{\mathcal{K}})(\z(t))$, for some $\theta>0$. Then from \eqref{appl1} and \eqref{appl2}, we  consider the following inclusion problem for \eqref{appl1} for a.e. $t>0$:
\begin{equation}\label{appl3}
	\left\{
	\begin{aligned}
		\frac{\d \z(t)}{\d t}+\mu\A\z(t)+\wi{\mathcal{B}}(\z(t))+\alpha\z(t)+\beta\wi{\mathcal{C}}_1(\z(t))+ \gamma\wi{\mathcal{C}}_2(\z(t))+\theta\z(t)+(\partial\I_{\mathcal{K}})(\z(t))&\ni\boldsymbol{0}, \\ 
		\z(0)&=\y_0-\y_e.
	\end{aligned}
	\right.
\end{equation}	

Let us now prove the following global stability result:
\begin{theorem}\label{thm3}
Let $\mathcal{K}$ satisfy Hypothesis \ref{AssupK}. Let us suppose $\f_e\in\H$ is given and $\y_e$ satisfies \eqref{appl2}. Moreover, we also assume that $\z(0)\in\mathcal{K}$. Then for sufficiently large $\theta>0$ (to be specified later), the problem \eqref{appl3} has a unique weak solution for $r\geq3$ satisfying 
\begin{align}\label{reg}
	\z\in\mathrm{C}([0,T];\H)\cap\mathrm{L}^2(0,T;\V)\cap\mathrm{L}^{r+1}(0,T;\wi\L^{r+1}) \ \text{ with } \ \frac{\d\z}{\d t}\in\mathrm{L}^{\frac{r+1}{r}}(\tau,T;\H),
\end{align}
 for any  $0<\tau<T$. Furthermore, there exists an $\delta=\delta(\theta)>0$ such that the following global exponential stability condition holds:
\begin{align*}
	\lim\limits_{t\to\infty}e^{\delta t}\|\z(t)\|_{\H}=0.
\end{align*}
Also, $\z(t)\in\mathcal{K}$ for all $t\in[0,T].$
\end{theorem}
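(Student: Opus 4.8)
The plan is to deduce existence, uniqueness and the regularity \eqref{reg} directly from Theorem \ref{mainT}, and then to establish the decay and the invariance by energy methods. Observe that \eqref{appl3} is precisely the abstract inclusion \eqref{1p44} with the choice $\mathfrak{F}=\theta\I$ (so that $M=\theta$ in \eqref{F_Oper}), vanishing forcing $\f\equiv\boldsymbol{0}$, and initial datum $\z(0)=\y_0-\y_e\in\mathcal{K}$. Since $\mathcal{K}$ satisfies Hypothesis \ref{AssupK} and the standing assumptions on $r,q,\beta,\mu$ hold, Theorem \ref{mainT} yields, on every interval $[0,T]$, a unique weak solution $\z$ with the regularity \eqref{reg} together with a selection $\xi(t)\in\partial\I_{\mathcal{K}}(\z(t))$ for a.e.\ $t$. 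The invariance is then immediate: because such a selection exists for a.e.\ $t$ and $\D(\partial\I_{\mathcal{K}})=\mathcal{K}$, we get $\z(t)\in\mathcal{K}$ for a.e.\ $t$; using $\z\in\mathrm{C}([0,T];\H)$, the closedness of $\mathcal{K}$ in $\H$, and $\z(0)\in\mathcal{K}$, this extends to all $t\in[0,T]$.

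For the decay I would test \eqref{appl3} against $\z(t)$, which is justified by the absolute continuity of $t\mapsto\|\z(t)\|_{\H}^2$ established in the proof of Theorem \ref{mainT}. Four structural facts enter: $\langle\A\z,\z\rangle=\|\nabla\z\|_{\H}^2$; the cancellation $\langle\wi{\mathcal{B}}(\z),\z\rangle=b(\z,\y_e,\z)$, which follows from $b(\z,\z,\z)=b(\y_e,\z,\z)=0$ and antisymmetry; the monotonicity bound \eqref{C1}, giving $\beta\langle\wi{\mathcal{C}}_1(\z),\z\rangle\geq\frac{\beta}{2^{r-1}}\|\z\|_{\wi{\L}^{r+1}}^{r+1}$; and the subdifferential inequality $\langle\xi,\z\rangle\geq0$, which holds because $\boldsymbol{0}\in\mathcal{K}$ implies $\I_{\mathcal{K}}(\boldsymbol{0})\geq\I_{\mathcal{K}}(\z)+\langle\xi,\boldsymbol{0}-\z\rangle$. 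These produce
\begin{align*}
	\frac12\frac{\d}{\d t}\|\z\|_{\H}^2+\mu\|\nabla\z\|_{\H}^2+(\alpha+\theta)\|\z\|_{\H}^2+\frac{\beta}{2^{r-1}}\|\z\|_{\wi{\L}^{r+1}}^{r+1}\leq|b(\z,\y_e,\z)|+|\gamma|\,|\langle\wi{\mathcal{C}}_2(\z),\z\rangle|.
\end{align*}
Since $\y_e\in\D(\A)\hookrightarrow\L^\infty$, I would bound $|b(\z,\y_e,\z)|=|b(\z,\z,\y_e)|\leq\|\y_e\|_{\L^\infty}\|\z\|_{\H}\|\nabla\z\|_{\H}$ and absorb it into $\frac{\mu}{2}\|\nabla\z\|_{\H}^2$ plus a multiple of $\|\z\|_{\H}^2$; and since $q<r$, I would interpolate the pumping term between $\H$ and $\wi{\L}^{r+1}$ exactly as in \eqref{4p4.5}, absorbing its leading part into $\frac{\beta}{2^{r}}\|\z\|_{\wi{\L}^{r+1}}^{r+1}$ and the remainder into $\|\z\|_{\H}^2$.

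Discarding the nonnegative gradient and absorption terms then leaves
\begin{align*}
	\frac{\d}{\d t}\|\z(t)\|_{\H}^2+2(\alpha+\theta-C)\|\z(t)\|_{\H}^2\leq0,
\end{align*}
where $C=C(\mu,\beta,\gamma,q,r,\|\y_e\|_{\L^\infty})$ is independent of $\theta$ and $t$. Choosing $\theta$ so large that $\delta_0:=\alpha+\theta-C>0$ (this is the ``sufficiently large $\theta$'' in the statement) and applying Gronwall's inequality gives $\|\z(t)\|_{\H}\leq\|\z(0)\|_{\H}e^{-\delta_0 t}$ with a constant independent of $T$; hence the solution extends to $[0,\infty)$ and, for any $0<\delta<\delta_0$, one has $e^{\delta t}\|\z(t)\|_{\H}\to0$. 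The main obstacle is the absorption step: one must verify that the constant $C$ controlling $|b(\z,\y_e,\z)|$ and the pumping contribution is genuinely uniform in $\z$ and in time, so that the threshold on $\theta$ is explicit. The pumping term demands the most care, since for $\gamma<0$ one must check that the interpolation exponent dictated by $q<r$ really permits its high-power part to be dominated by the available $\wi{\L}^{r+1}$-dissipation; for $\gamma>0$ this term carries a favorable sign by monotonicity and poses no difficulty.
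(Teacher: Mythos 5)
Your proposal is correct, and its overall skeleton coincides with the paper's: invoke Theorem \ref{mainT} with $\mathfrak{F}=\theta\I$ and $\f=\boldsymbol{0}$, test against $\z$, use the sign of the subdifferential term (valid since $\boldsymbol{0}\in\mathcal{K}$), and close with Gronwall. The difference lies in how the energy estimate is executed. The paper simply reuses the quasi-monotonicity estimates of Proposition \ref{prop3.1} (i.e.\ \eqref{3.4}, \eqref{3.5} and \eqref{c2.2}): the convective term $b(\z,\y_e,\z)$ and the pumping term are absorbed into the weighted damping quantities $\||\z+\y_e|^{\frac{r-1}{2}}\z\|_{\H}^2$ and $\||\y_e|^{\frac{r-1}{2}}\z\|_{\H}^2$ supplied by \eqref{C1}, via Young's inequality with exponents involving $\frac{2}{r-3}$; this yields a threshold $\mathfrak{C}=\varrho_{\eps}+\rho_{\wi\eps}+\rho_{\eps}$ for $\theta$ that is \emph{independent of the equilibrium} $\y_e$, at the price of treating $r>3$ and $d=r=3$ (with $2\beta\mu>1$) through different exponent computations. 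You instead exploit $\y_e\in\D(\A)\hookrightarrow\L^{\infty}(\mathbb{T}^d)$ ($d\leq3$) to bound $|b(\z,\z,\y_e)|\leq\|\y_e\|_{\L^\infty}\|\z\|_{\H}\|\nabla\z\|_{\H}$, and handle the pumping term by the interpolation $\|\z\|_{\wi\L^{q+1}}^{q+1}\leq\epsilon\|\z\|_{\wi\L^{r+1}}^{r+1}+C_\epsilon\|\z\|_{\H}^2$ (the remainder exponent is exactly $2$ because $q<r$, as one checks from $\theta_{\mathrm{int}}=\frac{2(r-q)}{(q+1)(r-1)}$) together with $\|\y_e\|_{\L^\infty}^{q-1}\|\z\|_{\H}^2$ for the cross term. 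This is more elementary, uniform in $r\geq3$ at the level of the stability estimate, and fully rigorous; its only cost is that your threshold $C$ for $\theta$ grows with $\|\y_e\|_{\L^\infty}$, whereas the paper's does not. Finally, for the invariance $\z(t)\in\mathcal{K}$ the paper cites an abstract result (together with Hypothesis \ref{AssupK} and Remark \ref{inv}), while you obtain it directly from Definition \ref{wdefn}: the existence of a selection $\xi(t)\in\partial\I_{\mathcal{K}}(\z(t))$ forces $\z(t)\in\mathcal{K}$ for a.e.\ $t$ by \eqref{subb}, and continuity of $\z$ in $\H$ plus closedness of $\mathcal{K}$ and $\z(0)\in\mathcal{K}$ upgrade this to every $t$; this self-contained argument is a legitimate substitute for the citation.
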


\begin{proof}
For the time being, we assume that $\theta>0$ be a fixed constant. One can apply main Theorem \ref{mainT} for $\f=\boldsymbol{0}$ and $\mathfrak{F}=\theta\I$ to ensure the existence of a unique weak solution of the system \eqref{appl3} having the regularity \eqref{reg}. %Moreover, it satisfies the invariance property $\z(t)\in\mathcal{K}$ for all $t\in[0,T].$

Let us now prove exponential stability. For this purpose, we take the inner product with $\z(\cdot)$ in \eqref{appl3} and use previous calculations (see Proposition \ref{prop3.1}) to get for a.e. $t\geq 0$,
\begin{align*}
&\frac{1}{2}\frac{\d}{\d t}\|\z(t)\|_{\H}^2+\frac{\mu}{2}\|\nabla\z(t)\|_{\H}^2+\beta\left(\frac{1}{4}-\frac{\wi\varepsilon}{4}\right) \||\z+\y_e|^{\frac{r-1}{2}}\z\|_{\H}^2+\beta\left(\frac{1}{4}-\frac{\varepsilon}{2}\right)\||\y_e|^{\frac{r-1}{2}}\z\|_{\H}^2 \leq -(\theta+\alpha-\mathfrak{C})\|\z(t)\|_{\H}^2,
\end{align*}
where $\mathfrak{C}:=\varrho_{\eps}+\rho_{\wi{\eps}}+\rho_{\eps}$ and we have used the monotonicity of $\partial\I_{\mathcal{K}}(\cdot)$ with $\boldsymbol{0}\in\mathcal{K}.$ This implies  
\begin{align*}
	\frac{\d}{\d t}\|\z(t)\|_{\H}^2\leq-2(\theta+\alpha-\mathfrak{C})\|\z(t)\|_{\H}^2,
\end{align*}
for a.e. $t\in[0,T]$. Let us choose $\theta>0$ be sufficiently large such that $\delta_1=\theta+\alpha-\mathfrak{C}>0.$ Using the Gronwall inequality, we have for all $t\geq 0$
\begin{align*}
\|\z(t)\|_{\H}^2\leq e^{-2\delta_1 t}\|\z(0)\|_{\H}^2.
\end{align*}
We choose $\delta\in(0,\delta_1)$ such that 
\begin{align*}
	\lim\limits_{t\to\infty}e^{\delta t}\|\z(t)\|_{\H}=0,
\end{align*}
which proves that the solutions of \eqref{appl3} are exponentially stabilizable. In view of Hypothesis \ref{AssupK}, Remark \ref{inv} and \cite[Corollary 2.54, pp. 56]{VBs}, we conclude that $\z(t)\in\mathcal{K}$ for all $t\in[0,T].$
\end{proof}
                   
\subsection{\textbf{\emph{Finite-dimensional compactly supported stabilizing feedback controllers}}}\label{finite}
 Let us now discuss the stability problem via finite-dimensional controllers which are compactly supported.  Here the controllers are situated in smaller parts of the domain and taking values  in some finite-dimensional subspace spanned by the eigenfunctions of the Stokes operator. Since the second order derivatives of the nonlinear operators are involved, we restrict ourselves to $r=3$ with $\gamma=0,$ and $r>3$ and $q\in[3,r)$ with $\gamma<0$ in this subsection.

Let $\{\w_k\}_{k\in\N}$ be the eigenfunctions of the Stokes operator $\A$. We consider 
\begin{align}\label{cvx}
	\mathcal{K}:=\mathrm{linspan}\{\w_k:k=1,\ldots, n\}\subset\H.
\end{align}
It is clear that $\boldsymbol{0}\in\mathcal{K}$ and $\mathcal{K}$ is a closed and convex set. Also, since $(\I+\lambda\A)$ is invertible and $\mathcal{K}$ is finite-dimensional, therefore, it satisfies the condition \eqref{appl1.1} of Hypothesis \ref{AssupK}. Moreover, we re-arrange the sequence $\{\w_k\}_{k\in\N\cup\{0\}}$ of eigenfunctions of the Stokes operator $\A$ in such a way that the first $k$ eigenfunctions are exactly in the same position as in the generators of $\mathcal{K}.$ Then, for any $\y\in\H$, we can write $\y=\sum\limits_{k=0}^{\infty}y_k\w_k,$ where $y_k=(\y,\w_k)\in\R$. We denote $\mathrm{P}_{\mathcal{K}},$ the orthogonal projection (finite-dimensional) in $\H$ onto $\mathcal{K}.$ Then one may write $\mathrm{P}_{\mathcal{K}}\y=\sum\limits_{k=1}^{n}y_k\w_k\in\mathcal{K}.$ Note that $\text{int}(\mathcal{K})=\varnothing$, as $\mathcal{K}$ is a proper subspace of $\H$.

 We consider a non-empty open subset $\omega\subset\subset\mathbb{T}^d$. Our aim is now to construct a finite-dimensional feedback controller $\mathbf{U}(\cdot)$ which is supported in the cylinder $\omega\times(0,\infty)$ such that the equilibrium solution $\y_e$ of the system  \eqref{appl1} are locally exponential stabilizable. Given $\omega\subset\subset\mathbb{T}^d$, the application $m:\L^2(\omega)\to\L^2(\mathbb{T}^d)$ such that
 	\begin{align*}
 		(m\y)(x):=\begin{cases}
 			\y(x), \ &x\in\omega,\\
 			0, \ &x\in\mathbb{T}^d\setminus\omega,
 		\end{cases}
 	\end{align*} 
 extends the functions in the subdomain $\L^2(\omega)$ to the whole space $\L^2(\mathbb{T}^d)$. Let $\mathrm{R}:\mathcal{K}\to\L^2(\omega)$ be a linear and continuous operator. 

Similar to \eqref{appl3}, we choose $\mathbf{U}(\cdot)\in\mathcal{P}m\mathrm{R}(\mathrm{P}_{\mathcal{K}}\z(\cdot))-\partial\I_{\mathcal{K}}(\z(\cdot))$ and we consider the following problem for a.e. $t>0$:
\begin{equation}\label{appl4}
\hspace{-4mm}	\left\{
	\begin{aligned}
		\frac{\d \z(t)}{\d t}+\mu\A\z(t)+\wi{\mathcal{B}}(\z(t))+\alpha\z(t)+ \beta\wi{\mathcal{C}}_1(\z(t))+\gamma\wi{\mathcal{C}}_2(\z(t)) +\partial\I_{\mathcal{K}}(\z(t))-\mathcal{P}m\mathrm{R}(\mathrm{P}_{\mathcal{K}}\z(t))&\ni\boldsymbol{0},\\ 
		\z(0)&=\y_0-\y_e,
	\end{aligned}
	\right.
\end{equation}	
where $\y_e\in\D(\A)$ be the solution of \eqref{appl2}. We now state the following local stability result:
\begin{theorem}\label{findim}
For $\f_e\in\H$ and $\mathcal{K}$ be the set given by \eqref{cvx}. Also, assume that $\z(0)\in\mathcal{K}$ with $\|\z(0)\|_{\H}<\rho,$ for some $\rho>0$. Then for every $\delta>0$, there exists a linear continuous operator $\mathrm{R}:\mathcal{K}\to\L^2(\omega)$ so that the problem \eqref{appl4} has a unique weak solution $\z(\cdot)$ for $r>3$ and $d=r=3$ with $2\beta\mu>1$, satisfying 
	\begin{align}\label{reg1}
		\z\in\mathrm{C}([0,T];\H)\cap\mathrm{L}^2(0,T;\V)\cap\mathrm{L}^{r+1}(0,T;\wi\L^{r+1})\ \ \text{ with }\ \ \frac{\d\z}{\d t}\in\mathrm{L}^{\frac{r+1}{r}}(\tau,T;\V'),
	\end{align}
 for any $0<\tau<T$. Furthermore, the following local exponential stability condition holds:
	\begin{align*}
		\lim\limits_{t\to\infty}e^{\delta t}\|\z(t)\|_{\H}=0.
	\end{align*}
	Also, $\z(t)\in\mathcal{K},$ for all $t\in[0,T].$
\end{theorem}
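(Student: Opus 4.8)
The plan is to separate the argument into a solvability-and-invariance part, which does not depend on the precise feedback, and a stabilization part, in which the operator $\mathrm{R}$ is actually constructed. For the first part, note that since $\mathrm{R}:\mathcal{K}\to\L^2(\omega)$ is linear and continuous and the maps $\mathrm{P}_{\mathcal{K}}$, $m$ and $\mathcal{P}$ are bounded, the composition $\z\mapsto-\mathcal{P}m\mathrm{R}(\mathrm{P}_{\mathcal{K}}\z)$ is a bounded linear operator on $\H$ and may be taken as the operator $\mathfrak{F}$ in the abstract formulation \eqref{1p44}. Applying Theorem \ref{mainT} with $\f=\boldsymbol{0}$ and this choice of $\mathfrak{F}$ then yields a unique weak solution with the regularity \eqref{reg1}; the set $\mathcal{K}$ in \eqref{cvx}, being a finite-dimensional subspace, satisfies Hypothesis \ref{AssupK}, so the invariance $\z(t)\in\mathcal{K}$ for all $t\in[0,T]$ follows exactly as in the proof of Theorem \ref{thm3} via Remark \ref{inv} and \cite[Corollary 2.54]{VBs}. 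In particular, along trajectories $\mathrm{P}_{\mathcal{K}}\z(t)=\z(t)$, so the feedback effectively acts as $\mathcal{P}m\mathrm{R}\z(t)$.

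For the stabilization I would first linearize \eqref{appl4} at $\z=\boldsymbol{0}$. Since $\mathcal{B}$ is bilinear, $\wi{\mathcal{B}}(\z)=\mathcal{B}'(\y_e)\z+\mathcal{B}(\z)$ with $\mathcal{B}'(\y_e)\z=\mathcal{B}(\z,\y_e)+\mathcal{B}(\y_e,\z)$, and similarly $\wi{\mathcal{C}}_i(\z)=\mathcal{C}_i'(\y_e)\z$ plus a second-order remainder. The linear part is the Oseen-type operator
\begin{align*}
\mathcal{A}_0\z:=\mu\A\z+\alpha\z+\mathcal{B}'(\y_e)\z+\beta\mathcal{C}_1'(\y_e)\z+\gamma\mathcal{C}_2'(\y_e)\z,
\end{align*}
which is a relatively compact perturbation of $\mu\A+\alpha\I$ and hence has compact resolvent, a discrete spectrum, and generates an analytic semigroup on $\H$. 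For a prescribed $\delta>0$, only finitely many eigenvalues of $\mathcal{A}_0$ have real part $\le\delta$. I would fix the dimension $n$ of $\mathcal{K}$ large enough that $\mu\lambda_{n+1}+\alpha>\delta$ and that $\mathrm{P}_{\mathcal{K}}$ captures the finitely many modes responsible for the instability, so that the complementary (spectrally stable) component decays at rate $\delta$ automatically.

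The core step is the construction of $\mathrm{R}$. Projecting the linearized controlled system onto the finite-dimensional subspace associated with the non-stable eigenvalues of $\mathcal{A}_0$ produces a finite-dimensional control system whose controllability is equivalent, by the Fattorini--Hautus criterion, to the statement that no adjoint eigenfunction of $\mathcal{A}_0^*$ vanishes on $\omega$. This is precisely the unique continuation property for the associated Oseen/Stokes system; invoking it certifies controllability with localized controls of the form $m\mathrm{R}_0(\cdot)$ supported in $\omega$, and a pole-placement (or algebraic Riccati) argument then provides a gain, expressed through the observation $\mathrm{P}_{\mathcal{K}}$ and the localized action $\mathcal{P}m$, i.e. the operator $\mathrm{R}$, such that the closed-loop linearized semigroup satisfies $\|e^{-(\mathcal{A}_0-\mathcal{P}m\mathrm{R}\mathrm{P}_{\mathcal{K}})t}\|\le Ce^{-\delta t}$.

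Finally I would transfer this to the nonlinear problem locally. The remainder
\begin{align*}
\mathcal{N}(\z):=\big(\wi{\mathcal{B}}(\z)-\mathcal{B}'(\y_e)\z\big)+\beta\big(\wi{\mathcal{C}}_1(\z)-\mathcal{C}_1'(\y_e)\z\big)+\gamma\big(\wi{\mathcal{C}}_2(\z)-\mathcal{C}_2'(\y_e)\z\big)
\end{align*}
vanishes to second order at $\z=\boldsymbol{0}$; using the second-order Gateaux derivatives \eqref{C.} (which exist only for $r\ge3$, and require $q\ge3$ for $\mathcal{C}_2$, explaining the restriction to $r=3$ with $\gamma=0$ and $r>3$, $q\in[3,r)$ with $\gamma<0$), one estimates $\mathcal{N}(\z)$ by $C\|\z\|^2$ in the relevant norm. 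Writing the closed-loop equation in mild form against the stabilized semigroup and working in the exponentially weighted space $\{\z:\sup_{t}e^{\delta t}\|\z(t)\|_{\H}<\infty\}$, a contraction-mapping and continuation argument shows that for $\|\z(0)\|_{\H}<\rho$ with $\rho$ sufficiently small the solution remains in the small ball and $e^{\delta t}\|\z(t)\|_{\H}\to0$; the monotone term $\partial\I_{\mathcal{K}}(\z)$ only helps in the energy balance since $\boldsymbol{0}\in\mathcal{K}$. The main obstacle I anticipate is this last nonlinear step in the supercritical range $r>3$: controlling the fast-growing remainder $\mathcal{N}(\z)$ within the limited regularity \eqref{reg1} of weak solutions and closing the fixed point in the weighted norm, which is exactly why the second-order differentiability of $\mathcal{C}_1,\mathcal{C}_2$ and the restrictions on $r,q,\gamma$ are imposed.
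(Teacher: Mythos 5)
Your first part (well-posedness plus invariance via Theorem \ref{mainT} with $\mathfrak{F}=-\mathcal{P}m\mathrm{R}\mathrm{P}_{\mathcal{K}}$, and the observation that $\mathrm{P}_{\mathcal{K}}\z(t)=\z(t)$ along trajectories) agrees with the paper. The stabilization part, however, has a genuine gap, and the gap is precisely that you do not exploit the structural fact you yourself recorded. Because $\z(t)\in\mathcal{K}$ for all $t$ and $\partial\I_{\mathcal{K}}(\z)=\mathcal{K}^{\perp}$ for every $\z$ in the subspace $\mathcal{K}$ (which has empty interior, so every point is a boundary point), testing \eqref{appl4.0} against $\w_1,\ldots,\w_n$ annihilates the subdifferential and reduces the PDE \emph{exactly} to the $n$-dimensional ODE system \eqref{fd}/\eqref{appl5}: there is no infinite-dimensional ``complementary component'' to worry about at all. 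The paper then proves exact controllability of the linearized ODE system \eqref{appl6} by an elementary unique continuation argument (the restrictions $\w_k|_{\omega}$ of the explicit trigonometric eigenfunctions \eqref{ef} are linearly independent in $\L^2(\omega)$), invokes Wonham's theorem to obtain \emph{complete} stabilizability --- a closed-loop bound \eqref{stbl} with \emph{arbitrary} rate $\sigma$ for the \emph{fixed} dimension $n$ --- and finishes with variation of constants, Gronwall, and a bootstrap for small $|\v_0|_{\mathbb{R}^n}$, where all nonlinear estimates are carried out in $\mathbb{R}^n$ using the boundedness of the $\w_k$'s.

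Your route instead performs a spectral decomposition of the infinite-dimensional Oseen-type operator $\mathcal{A}_0$, and this fails the theorem as stated in several ways. First, you choose $n$ depending on $\delta$ (``fix the dimension $n$ of $\mathcal{K}$ large enough that $\mu\lambda_{n+1}+\alpha>\delta$''), but in Theorem \ref{findim} the set $\mathcal{K}$ of \eqref{cvx} is given and $\delta>0$ is arbitrary; with fixed $n$ your stable complement decays only at the rate dictated by the spectral gap, so arbitrary $\delta$ is unreachable in your framework (in the paper it is reachable because the dynamics are exactly finite-dimensional and Wonham gives any rate). Second, your claim that $\mathrm{P}_{\mathcal{K}}$ ``captures the modes responsible for the instability'' is unjustified: $\mathcal{K}$ is spanned by eigenfunctions of the \emph{Stokes} operator, not of $\mathcal{A}_0$, so the unstable subspace of $\mathcal{A}_0$ need not be contained in, nor observable through, $\mathcal{K}$, and a Riccati/pole-placement gain factored as $\mathcal{P}m\mathrm{R}\mathrm{P}_{\mathcal{K}}$ cannot then stabilize the linearization on all of $\H$. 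Third, the Fattorini--Hautus route needs a unique continuation property for eigenfunctions of the adjoint of the full linearized CBFeD operator (including $\beta\mathcal{C}_1'(\y_e)$, $\gamma\mathcal{C}_2'(\y_e)$) on an arbitrary $\omega$, a nontrivial Carleman-type result that is neither cited nor proved; the paper's reduction avoids it entirely. Finally, the obstacle you flag at the end --- estimating the supercritical remainder $\mathcal{N}(\z)$ by $C\|\z\|_{\H}^2$ within weak-solution regularity --- is real and is not closable in $\H$ for $r>3$; it disappears only after the finite-dimensional reduction, where norm equivalence and the explicit eigenfunctions make the quadratic and higher-order bounds (the paper's Steps 4 and 5) elementary.
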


\begin{proof}
	We prove the theorem in the following steps:
	\vskip 2mm
	\noindent \textbf{Step 1.} \emph{The finite-dimensional system:}
	For any linear continuous operator $\mathrm{R}$, the operator $\mathcal{P}m\mathrm{R}\mathrm{P}_{\mathcal{K}}$ is  linear continuous  as it is the composition of linear continuous operators. Thus from Theorem \ref{mainT}, with $\mathfrak{F}=-\mathcal{P}m\mathrm{R}\mathrm{P}_{\mathcal{K}}$, one can conclude that the system \eqref{appl4} has a unique weak solution with required regularity given in \eqref{reg1}. Moreover, we have $\z(t)\in\mathcal{K}$ for all $t\in[0,T]$. Now we only need to show the existence of such a \emph{stabilizing operator} $\mathrm{R}$. For this, let us first consider the following system for a.e. $t>0$:
\begin{equation}\label{appl4.0}
	\left\{
	\begin{aligned}
		\frac{\d \z(t)}{\d t}+\mu\A\z(t)+\wi{\mathcal{B}}(\z(t))+\alpha\z(t)+ \beta\wi{\mathcal{C}}_1(\z(t))+\gamma\wi{\mathcal{C}}_2(\z(t))+\partial\I_{\mathcal{K}}(\z(t))-\mathcal{P}m\mathbf{\overline{U}}(t)&\ni\boldsymbol{0},\\ 
		\z(0)&=\y_0-\y_e,
	\end{aligned}
	\right. 
\end{equation}	
where we assume $\mathbf{\overline{U}}(\cdot)$ is a distributed control in $\mathrm{L}^2(0,T;\L^2(\omega))$. Let us take the inner product with $\w_k, \  k=0,\ldots,n$ in the first equation of the system \eqref{appl4.0} to get
\begin{align}\label{appl4.1}
&\left(\frac{\d \z(t)}{\d t},\w_k\right)+\mu(\z(t),\A\w_k)+ \alpha(\z(t),\w_k)\nonumber\\&=-(\wi{\mathcal{B}}(\z(t)),\w_k)-\beta(\wi{\mathcal{C}}_1(\z(t)),\w_k)- \gamma(\wi{\mathcal{C}}_2(\z(t)),\w_k)+(m\mathbf{\overline{U}}(t),\w_k),
\end{align}
where we have used the fact that $\partial \I_{\mathcal{K}}=\mathcal{K}^{\perp}$ in the elements of $\mathcal{K}=\partial\mathcal{K}$. We have 
\begin{align}\label{appl4.2}
	(\wi{\mathcal{B}}(\z),\w_k)=(\mathcal{B}(\z+\y_e),\w_k) -(\mathcal{B}(\y_e),\w_k)=b(\z,\z,\w_k)+ b(\z,\y_e,\w_k) + b(\y_e,\z,\w_k),
\end{align}
and, by Taylor's formula (\cite[Theorem 7.9.1]{PGC})
\begin{align*}
	\wi{\mathcal{C}}_1(\z)=\mathcal{C}_1(\z+\y_e)-\mathcal{C}_1(\y_e)
	=\mathcal{C}_1'(\y_e)\z+\int_0^1 \mathcal{C}_1''(\y_e+\theta\z)(\z\otimes\z)\d\theta.
\end{align*}
It implies that 
\begin{align}\label{appl4.3}
	(\wi{\mathcal{C}}_1(\z),\w_k)&=(\mathcal{C}_1'(\y_e)\z,\w_k)+\left(\int_0^1 \mathcal{C}_1''(\y_e+\theta\z)(\z\otimes\z)\d\theta,\w_k\right)\nonumber\\&=
	\sum\limits_{i=1}^{k}z_i(\mathcal{C}_1'(\y_e)\w_i,\w_k)+\sum\limits_{i,j=1}^{k}z_iz_j
	\left(\int_0^1 \mathcal{C}_1''(\y_e+\theta\z)(\w_i\otimes \w_j)\d\theta,\w_k\right),
\end{align}
where $z_k(t):=(\z(t),\w_k)$ for $k=0,\ldots,n$. Then the equation \eqref{appl4.1} together with \eqref{appl4.2}-\eqref{appl4.3} yields
\begin{align}\label{fd}
	\left\{
	\begin{aligned}
	z_k'(t)+\mu\lambda_kz_k(t)+\alpha z_k(t)+\sum\limits_{i=1}^{n}h_{ik}^1z_i(t)& +\sum\limits_{i=1}^{n}h_{ik}^2z_i(t)+\sum\limits_{i,j=1}^{n}g_{ijk}^1z_i(t)z_j(t)+\sum\limits_{i,j=1}^{n}g_{ijk}^2(\z)z_i(t)z_j(t)\\&=(m\mathbf{\overline{U}}(t),\w_k),\\
		z_k(0)&=(\y_0-\y_e,\w_k),
	\end{aligned}
	\right.
\end{align}
where
\begin{align*}
	g_{ijk}^1&=b(\w_i,\w_j,\w_k),\nonumber\\
	g_{ijk}^2(\z)&=\beta\left(\int_0^1 \mathcal{C}_1''(\y_e+\theta\z)(\w_i\otimes\w_j)\d\theta,\w_k\right)+\gamma\left(\int_0^1 \mathcal{C}_2''(\y_e+\theta\z)(\w_i\otimes\w_j)\d\theta,\w_k\right),\nonumber\\
	h_{ik}^1&=b(\w_i,\y_e,\w_k)+b(\y_e,\w_i,\w_k) \text{ and }
	h_{ik}^2=\beta(\mathcal{C}_1'(\y_e)\w_i,\w_k)+\gamma(\mathcal{C}_2'(\y_e)\w_i,\w_k),
\end{align*}
for $i,j,k=0,\ldots,n$. Note that \eqref{fd} represents a system of  $n$ nonlinear ordinary differential equations with unknowns $z_k(\cdot)$, for $k=0,\ldots,n$. We define the linear operator $\mathcal{L}:\R^n\to\R^n$, the quadratic form $\mathcal{Q}(\cdot):\R^n\to\R^n$ and the nonlinear operator $\mathcal{N}(\cdot):\R^n\to\R^n$, which are given by 
\begin{align}\label{vectors}
	&\mathcal{L}\v:=\left(\mu\lambda_kz_k+\alpha z_k+
	\sum\limits_{i=1}^{n}(h_{ik}^1+h_{ik}^2)z_i\right)_{k=1}^{n}, \ 
	\mathcal{Q}(\v):=\left(\sum\limits_{i,j=1}^{n}g_{ijk}^1z_iz_j\right)_{k=1}^n\nonumber\\&
	\text{ and }  \mathcal{N}(\v):=\left(\sum\limits_{i,j=1}^{n}g_{ijk}^2(\z)z_iz_j\right)_{k=1}^n. 
\end{align}
where $\v(\cdot)=\left(z_k(\cdot)\right)_{k=1}^n$. Also, we define $\mathcal{H}:\wi\L^2(\omega)\to\R^n$ by $\mathcal{H}\overline{\mathbf{U}}=\left((m\mathbf{\overline{U}},\w_k)\right)_{k=1}^n$.
We finally rewrite the system \eqref{fd} in the following matrix form:
	\begin{equation}\label{appl5}
	\left\{
	\begin{aligned}
		\v'(t)+\mathcal{L}\v(t)+\mathcal{Q}(\v(t))+\mathcal{N}(\v(t))&=\mathcal{H}\overline{\mathbf{U}}(t), \ \text{ for a.e. } \ t>0,\\
	\v(0)&=\v_0,
	\end{aligned}
	\right.
\end{equation}
where $\v_0=\left(z_k(0)\right)_{k=1}^n.$

\vskip 2mm
\noindent \textbf{Step 2.} \emph{The linear system and stabilization:} 
We linearize the above system \eqref{appl5} about $\boldsymbol{0}$ as
	\begin{equation}\label{appl6}
	\left\{
	\begin{aligned}
		\v'(t)+\mathcal{L}\v(t)&=\mathcal{H}\overline{\mathbf{U}}(t), \ \text{ for a.e. } \ t>0,\\
	\v(0)&=\v_0.
	\end{aligned}
	\right.
\end{equation}
Moreover, we write the corresponding adjoint system as 
\begin{align}\label{appl7}
	\q'(t)-\mathcal{L^*}\q(t)=0, \ \text{ for a.e. } \ t>0,
\end{align}
where $\mathcal{L^*}$ is adjoint to $\mathcal{L}.$ We will now prove the unique continuation property of the adjoint system \eqref{appl7}. In order to do this, we need to show that $\mathcal{H}^*\q(t)=0$ for all $t>0$ implies   $\q(t)=0$, for all $t>0$,  where $\mathcal{H}^*(\cdot)$ is adjoint to $\mathcal{H}(\cdot)$ (see \cite[pp. 762]{RAM}). Then, for any $\v\in\R^n$, we calculate 
	\begin{align*}
		(\mathbf{\overline{U}},\mathcal{H}^*\v)_{\L^2(\omega)}&=(\mathcal{H}\mathbf{\overline{U}},\v)
		_{\R^n}=\left(\mathcal{H}\mathbf{\overline{U}}\right)^{\top}\v=
		(m\mathbf{\overline{U}},\w_1)z_1+\cdots+(m\mathbf{\overline{U}},\w_n)z_n\nonumber\\&=
		(m\mathbf{\overline{U}},\w_1z_1+\cdots+\w_nz_n)=
		\bigg(m\mathbf{\overline{U}},\sum\limits_{k=1}^n\w_kz_k\bigg)_{\L^2(\mathbb{T}^d)}=\bigg(\mathbf{\overline{U}},\sum\limits_{k=1}^nz_k\w_k\bigg)_{\L^2(\omega)},
	\end{align*}
for all $\mathbf{\overline{U}}\in\L^2(\omega)$. 	This gives that $\mathcal{H}^*\v=\sum\limits_{k=1}^nz_k\w_k|_{\omega}.$ So, if $\q=(q_1,\ldots,q_n),$ then 
	\begin{align*}
		\mathcal{H}^*\q(t)=0 \  \text{ if and only if } \  \sum\limits_{k=1}^nq_k(t)\w_k|_{\omega}=0 \ \text{in} \ \L^2(\omega).
	\end{align*}
	Since $\{\w_k\}_{k\in\N}$ is an orthonormal basis in $\L^2(\mathbb{T}^d)$, we know that  $\{\w_k\}_{k\in\N}\in\L^2(\omega)$ is linearly independent, which shows that $q_k(t)=0$ for all $t>0$ and $k=1,2,\ldots,n$ (see \cite[pp. 1458]{VBT}). This proves the unique continuation property for the adjoint system \eqref{appl7} and hence the system \eqref{appl6} is exactly controllable (see \cite[Chapter 2]{fb}). Thus by the well known \emph{Wonham theorem} \cite[Chapter 2, pp. 35]{jZ}, one can conclude that \eqref{appl6} is \emph{completely stabilzable}. Therefore by definition, it asserts that there exists a linear and continuous feedback controller $\mathrm{R}:\R^n\to\L^2(\omega)$ and a constant $\hat{M}>0$ such that for every $\sigma>0,$ the solution of \eqref{appl6} satisfies $|\v(t)|_{\mathbb{R}^n}\leq\hat{M}e^{-\sigma t}|\v_0|_{\mathbb{R}^n},$ for $t>0$, which can also be written as 
\begin{align}\label{stbl}
	|e^{-(\mathcal{L}-\mathcal{H}\mathrm{R})t}|_{\mathscr{L}(\mathbb{R}^n)}\leq\hat{M}e^{-\sigma t}, \ \text{for} \ t>0,
\end{align}
where $\mathscr{L}(\mathbb{R}^n)$ is the space of all bounded linear operators on $\mathbb{R}^n$. Moreover from \eqref{appl6}, we infer that the controller $\mathbf{\overline{U}}(t)=\mathrm{R}\v(t)$ stabilzes the following system asymptotically:
	\begin{equation*}
	\left\{
	\begin{aligned}
		\v'(t)+(\mathcal{L}-\mathcal{H}\mathrm{R})\v(t)&=\boldsymbol{0}, \ \text{ for a.e. } \ t>0,\\
	\v(0)&=\v_0.
	\end{aligned}
	\right.
\end{equation*}
\vskip 2mm
\noindent \textbf{Step 3.} \emph{Stabilization of the nonlinear system:} 
 Let us now consider the following nonlinear system corresponding to \eqref{appl5}:
	\begin{equation}\label{appl9}
	\left\{
	\begin{aligned}
		\v'(t)+(\mathcal{L}-\mathcal{H}\mathrm{R})\v(t)+\mathcal{Q}(\v(t))+\mathcal{N}(\v(t))&=\boldsymbol{0}, \ \text{ for a.e. } \ t>0,\\
	\v(0)&=\v_0.
	\end{aligned}
	\right.
\end{equation}
Our aim is to now show that the controller $\mathbf{\overline{U}}(t)=\mathrm{R}\v(t)$ stabilizes the nonlinear system \eqref{appl9} exponentially. The solution of \eqref{appl9} is given by the variation of constant formula 
\begin{align}\label{stbl1}
	\v(t)=e^{-(\mathcal{L}-\mathcal{H}\mathrm{R})t}\v_0-\int_0^t e^{-(\mathcal{L}-\mathcal{H}\mathrm{R})(t-s)}\mathcal{Q}(\v(s))\d s-\int_0^t e^{-(\mathcal{L}-\mathcal{H}\mathrm{R})(t-s)}\mathcal{N}(\v(s))\d s.
\end{align}
Using \eqref{stbl} in \eqref{stbl1} to obtain 
\begin{align}\label{stbl2}
	|\v(t)|_{\mathbb{R}^n}\leq \hat{M}e^{-\sigma t}|\v_0|_{\mathbb{R}^n}+\hat{M}\int_0^t e^{-\sigma (t-s)}|\mathcal{Q}(\v)|_{\mathbb{R}^n}\d s+\hat{M}\int_0^t e^{-\sigma (t-s)}|\mathcal{N}(\v)|_{\mathbb{R}^n}\d s.
\end{align}
 From \eqref{vectors}, we have 
\begin{align}\label{appl5.0}
	|\mathcal{Q}(\v)|^2_{\mathbb{R}^n}=\sum\limits_{k=1}^n\left(\sum\limits_{i,j=1}^{n}g_{ijk}^1z_iz_j\right)^2.
\end{align}
By using \eqref{ef}, we calculate
\begin{align*}
|g_{ijk}^1|=|b(\w_i,\w_j,\w_k)|\leq\int_{\mathbb{T}^d} |\w_i(x)||\nabla\w_j(x)||\w_k(x)|\d x\leq
\left(\frac{2}{\mathrm{L}^d}\right)^{\frac{3}{2}}\frac{2\pi}{\mathrm{L}}|\mathbb{T}^d|,
\end{align*}
which gives $|g_{ijk}^1|^2\leq\frac{32\pi^2}{\mathrm{L}^{d+2}}.$ On employing discrete H\"older's inequality in \eqref{appl5.0}, we find
\begin{align*}
|\mathcal{Q}(\v)|^2_{\mathbb{R}^n}\leq\frac{32\pi^2}{\mathrm{L}^{d+2}}\sum\limits_{k=1}^n \left(\sum\limits_{i,j=1}^{n}z_iz_j\right)^2\leq\frac{32\pi^2}{\mathrm{L}^{d+2}}\sum\limits_{k=1}^n\left(\sum\limits_{i=1}^{n}z_i^2\right)^2\leq\frac{32\pi^2}{\mathrm{L}^{d+2}}|\v|^4,
\end{align*}
which implies that 
\begin{align}\label{appl5.0.1}
	|\mathcal{Q}(\v)|_{\mathbb{R}^n}\leq\gamma_0|\v|^2_{\mathbb{R}^n},
\end{align} 
where the constant $\gamma_0:=\frac{4\pi}{\mathrm{L}}\left(\frac{2}{\mathrm{L}^d}\right)^{\frac{1}{2}}.$ 
Now we calculate $|\mathcal{N}(\v)|^2_{\mathbb{R}^n}$ separately for  $r=3$ with $\gamma=0,$ and $r>3$ and $q\in[3,r)$.

\vskip 2mm
\noindent \textbf{Step 4.} \emph{The case $r=3$ with $\gamma=0.$} Using discrete H\"older's inequality, we obtain 
 \begin{align}\label{appl5.10}
|\mathcal{N}(\v)|^2_{\mathbb{R}^n}=\sum\limits_{k=1}^n\left(\sum\limits_{i,j=1}^{n}g_{ijk}^2(\z)z_iz_j\right)^2
%\nonumber\\&=\sum\limits_{k=1}^n\left(\beta\sum\limits_{i,j=1}^{n}\left(\int_0^1 \mathcal{C}_1''(\y_e+\theta\z)(\w_i\otimes\w_j)\d\theta,\w_k\right)\z_i\z_j\right)^2
&\leq \beta^2\sum\limits_{i,j,k=1}^n\left(\int_0^1\mathcal{C}_1''(\y_e+\theta\z)(\w_i\otimes\w_j)\d\theta,\w_k\right)^2\left(\sum\limits_{i=1}^nz_i^2\right)^2\nonumber\\&\leq\beta^2|\v|^4
\sum\limits_{i,j,k=1}^n\left(\int_0^1\mathcal{C}_1''(\y_e+\theta\z)(\w_i\otimes\w_j)\d\theta,\w_k\right)^2.
\end{align}
We now calculate
\begin{align}\label{appl5.11}
	\left|\left(\int_0^1 \mathcal{C}_1''(\y_e+\theta\z)(\w_i\otimes\w_j)\d\theta,\w_k\right)\right|=
	\left|\int_{\mathbb{T}^d}\left(\int_0^1 \mathcal{C}_1''(\y_e+\theta\z)(\w_i\otimes\w_j)\d\theta\right)\w_k(x)\d x\right|.
\end{align}
Using \eqref{ef} and \eqref{C} (for $r=3$), we obtain 
\begin{align*}
	\left|\int_0^1 \mathcal{C}_1''(\y_e+\theta\z)(\w_i\otimes\w_j)\d\theta\right|&= 2
	\left|\int_0^1\left(((\y_e+\theta\z)\cdot\w_i)\w_j +((\y_e+\theta\z)\cdot\w_j) \w_i+(\y_e+\theta\z)(\w_i\cdot\w_j)\right)\d\theta\right|\nonumber\\&\leq
	6|\y_e+\theta\z| |\w_i||\w_j|.
\end{align*}
Thus from \eqref{appl5.11}  and \eqref{ef}, we infer
\begin{align}\label{appl5.12}
\left|\left(\int_0^1 \mathcal{C}_1''(\y_e+\theta\z)(\w_i\otimes\w_j)\d\theta,\w_k\right)\right|&\leq 6\left(\frac{2}{\mathrm{L}^d}\right)^{\frac{3}{2}}\int_{\mathbb{T}^d}(|\y_e(x)|+|\z(x)|)\d x.
\end{align}
Let us calculate
\begin{align*}
\int_{\mathbb{T}^d} |\z(x)|\d x=\int_{\mathbb{T}^d} \left|\sum\limits_{k=1}^n z_k \w_k(x) \right|\d x \leq\int_{\mathbb{T}^d}\left(\sum\limits_{k=1}^n|z_k||\w_k(x)|\right)\d x\leq|\v|_{\mathbb{R}^n}
\left(\sum\limits_{k=1}^n\left(\int_{\mathbb{T}^d}|\w_k(x)|\d x\right)^{2}\right)^{1/2}\leq \sqrt{n} \mathrm{L}^{\frac{d}{2}}|\v|_{\mathbb{R}^n}.
\end{align*}
Thus by the above estimate, \eqref{appl5.12} implies that 
\begin{align}\label{appl5.13}
\left|\left(\int_0^1 \mathcal{C}_1''(\y_e+\theta\z)(\w_i\otimes\w_j)\d\theta,\w_k\right)\right|&\leq 
6\left(\frac{2}{\mathrm{L}^d}\right)^{\frac{3}{2}}\left(C_1+\sqrt{n}\mathrm{L}^{\frac{d}{2}}|\v|_{\mathbb{R}^n}\right),
%\nonumber\\&=12\left(\frac{2}{\mathrm{L}^d}\right)^{\frac{1}{2}}\left(C_1+n\left(\frac{2}{\mathrm{L}^d}\right)^{\frac{1}{2}}|\v|\right)
\end{align}
where $\|\y_{e}\|_{\widetilde{\mathbb{L}}^1}\leq C_1$ (see \eqref{eg} below). Using \eqref{appl5.13}, we obtain from \eqref{appl5.10} that
\begin{align}\label{appl5.15}
|\mathcal{N}(\v)|_{\mathbb{R}^n}&\leq\gamma_1(1+|\v|_{\mathbb{R}^n})|\v|^2_{\mathbb{R}^n},
\end{align}
where $\gamma_1:=6\beta\left(\frac{2n}{\mathrm{L}^d}\right)^{\frac{3}{2}}\max\left\{C_1, \sqrt{n}\mathrm{L}^{\frac{d}{2}}\right\}$. Combining \eqref{appl5.0.1} and \eqref{appl5.15}, and using it in \eqref{stbl2}, we find 
%\begin{align*}
%|\v(t)|_{\mathbb{R}^n}\leq Me^{-\sigma t}|\v_0|_{\mathbb{R}^n}+M\gamma_0\int_0^t e^{-\sigma (t-s)}|\v(s)|^2_{\mathbb{R}^n}\d s+M\gamma_1\int_0^t e^{-\sigma (t-s)}(|\v(s)|^2_{\mathbb{R}^n}+|\v(s)|_{\mathbb{R}^n}^3)\d s,
%\end{align*}
%which in turn implies that 
\begin{align*}
|\v(t)|_{\mathbb{R}^n}\leq \hat{M}e^{-\sigma t}|\v_0|_{\mathbb{R}^n}+\hat{M}\gamma_0'\int_0^t e^{-\sigma (t-s)}|\v(s)|_{\mathbb{R}^n}^2\d s+\hat{M}\gamma_1 \int_0^t e^{-\sigma (t-s)}|\v(s)|_{\mathbb{R}^n}^3\d s,	
\end{align*}
where $\gamma_0'=\gamma_0+\gamma_1.$ Using the Gronwall lemma, the above inequality implies 
\begin{align}\label{appl5.16}
|\v(t)|_{\mathbb{R}^n}\leq \hat{M}|\v_0|_{\mathbb{R}^n}\exp\left(-\sigma t+\gamma_0'\int_0^t |\v(s)|_{\mathbb{R}^n}\d s+\gamma_1\int_0^t |\v(s)|^2_{\mathbb{R}^n}\d s\right),
\end{align}
for $t>0.$ Let us suppose that the initial data $\v_0$ be such that $\hat{M}|\v_0|_{\mathbb{R}^n}<\rho$ for some constant $\rho>0,$ which will be specified later. We want to show that $|\v(t)|_{\mathbb{R}^n}<\rho$ for all $t>0$. Suppose this does not hold true. Then by the continuity of $\v(\cdot)$, there exists a time $T_1>0$ such that 
\begin{align}\label{appl5.17}
	|\v(t)|_{\mathbb{R}^n}<\rho \  \text{for} \ t\in[0,T_1) \ \text{and} \ |\v(T_1)|_{\mathbb{R}^n}=\rho.
\end{align}
For $t=T_1$, \eqref{appl5.16} implies that 
\begin{align*}
|\v(T_1)|_{\mathbb{R}^n}\leq \hat{M}|\v_0|_{\mathbb{R}^n}\exp\left\{\left(-\sigma+\gamma_0'\rho+\gamma_1\rho^2\right)T_1\right\}.
\end{align*}
It can be easily seen that the quadratic equation $-\sigma+\gamma_0'\rho+\gamma_1\rho^2=0$ has positive discriminant, so it has real roots. Even, one can conclude that for $\rho<\rho_1:=-\frac{\gamma_0'}{2\gamma_1}+\sqrt{\frac{\sigma}{\gamma_1}+\left(\frac{\gamma_0'}{2\gamma_1}\right)^2}$, where $\rho_1>0$, we have $|\v(T_1)|_{\mathbb{R}^n}<\rho,$ which is a contradiction to \eqref{appl5.17}. Thus, we have 
\begin{align}\label{appl5.18}
	|\v(t)|_{\mathbb{R}^n}<\rho \  \text{for every} \ t\in[0,\infty).
\end{align}
Hence, from \eqref{appl5.16} and \eqref{appl5.18}, we conclude that $|\v(t)|_{\mathbb{R}^n}\leq \hat{M}|\v_0|_{\mathbb{R}^n}e^{-\delta_1t},$   for every $t\geq0,$ where $\delta_1=\sigma-\gamma_0'\rho-\gamma_1\rho^2>0$ , as we know $-\sigma+\gamma_0'\rho+\gamma_1\rho^2<0$ for all $\rho\in(0,\rho_1)$.  Then for $\delta\in(0,\delta_1)$, the above estimate  implies 
\begin{align}\label{appl5.19}
	\lim\limits_{t\to\infty} e^{\delta t}|\v(t)|_{\mathbb{R}^n}=0.
\end{align}
But $\delta$ is arbitrary in $(0,\sigma),$ if  $\rho$ is small enough. Since $\sigma>0$ is also arbitrary, then  $\delta$ is arbitrary in $(0,\infty).$ 
Hence, for the feedback controller $\mathbf{\overline{U}}(t)=\mathrm{R}\v(t)$ and sufficiently small initial data, the solution $\v(t)$ of \eqref{appl5} is defined for all $t\geq0$  and the stability condition \eqref{appl5.19} holds. Furthermore, as $\z(t)\in\mathcal{K}$ for $t\in[0,\infty)$, the solution $\z(t)$ of \eqref{appl4.0} is defined on $[0,\infty)$, for the feedback controller $\mathbf{\overline{U}}(t)=\mathrm{R}\mathcal{P}_{\mathcal{K}}\z(t)$, and the exponential stability condition $\lim\limits_{t\to\infty} e^{\delta t}|\z(t)|_{\mathbb{R}^n}=0$ holds.
\vskip 2mm
\noindent \textbf{Step 5.} \emph{The case $r>3$ and $q\in[3,r)$\  \text{and} \ $\gamma<0$.} In this case, we find 
\begin{align}\label{appl5.1}
	|\mathcal{N}(\v)|^2_{\mathbb{R}^n}&=\sum\limits_{k=1}^n\left(\sum\limits_{i,j=1}^{n}g_{ijk}^2(\z)z_iz_j\right)^2\nonumber\\&=\sum\limits_{k=1}^n\bigg[\beta\sum\limits_{i,j=1}^{n}\left(\int_0^1 \mathcal{C}_1''(\y_e+\theta\z)(\w_i\otimes\w_j)\d\theta,\w_k\right)z_iz_j+\gamma\sum\limits_{i,j=1}^{n}\left(\int_0^1 \mathcal{C}_2''(\y_e+\theta\z)(\w_i\otimes\w_j)\d\theta,\w_k\right)z_iz_j\bigg]^2\nonumber\\&\leq
	2\sum\limits_{k=1}^{n}\left(\beta\sum\limits_{i,j=1}^{n}\left(\int_0^1 \mathcal{C}_1''(\y_e+\theta\z)(\w_i\otimes\w_j)\d\theta,\w_k\right)z_iz_j\right)^2\nonumber\\&
	\quad+2\sum\limits_{k=1}^n\left(\gamma\sum\limits_{i,j=1}^{n}\left(\int_0^1 \mathcal{C}_2''(\y_e+\theta\z)(\w_i\otimes\w_j)\d\theta,\w_k\right)z_iz_j\right)^2=:
	2\beta^2\sum\limits_{k=1}^n E_k^1+2\gamma^2\sum\limits_{k=1}^n E_k^2.
\end{align}
We calculate $E_k^1$ by using discrete H\"older's inequality as follows:
\begin{align*}
	E_k^1&%=\left(\sum\limits_{i,j=1}^{n}\left(\int_0^1 \mathcal{C}_1''(\y_e+\theta\z)(\w_i\otimes\w_j)\d\theta,\w_k\right)\z_i\z_j\right)^2\nonumber\\&
	=\left(\sum\limits_{i=1}^n\left(\sum\limits_{j=1}^n\left(\int_0^1 \mathcal{C}_1''(\y_e+\theta\z)(\w_i\otimes\w_j)\d\theta,\w_k\right)z_j\right)z_i\right)^2\nonumber\\&\leq
	\sum\limits_{i,j=1}^n\left(\int_0^1 \mathcal{C}_1''(\y_e+\theta\z)(\w_i\otimes\w_j)\d\theta,\w_k\right)^2\left(\sum\limits_{i=1}^nz_i^2\right)
	\left(\sum\limits_{j=1}^nz_j^2\right).
\end{align*}
In a similar way, we can estimate $E_k^2$ also. Using the above estimates, we obtain from \eqref{appl5.1} that
\begin{align}\label{appl5.2}
	|\mathcal{N}(\v)|^2_{\mathbb{R}^n}&\leq2\beta^2|\v|^4_{\mathbb{R}^n}\sum\limits_{i,j,k=1}^n\left(\int_0^1 \mathcal{C}_1''(\y_e+\theta\z)(\w_i\otimes\w_j)\d\theta,\w_k\right)^2\nonumber\\&\quad
	+2\gamma^2|\v|^4_{\mathbb{R}^n}\sum\limits_{i,j,k=1}^n\left(\int_0^1 \mathcal{C}_2''(\y_e+\theta\z) (\w_i\otimes\w_j)\d\theta,\w_k\right)^2.
\end{align}
Moreover, we calculate
\begin{align}\label{appl5.3}
	\left|\left(\int_0^1 \mathcal{C}_1''(\y_e+\theta\z)(\w_i\otimes\w_j)\d\theta,\w_k\right)\right|=
	\left|\int_{\mathbb{T}^d}\left(\int_0^1 \mathcal{C}_1''(\y_e+\theta\z)(\w_i\otimes\w_j)\d\theta\right)\w_k(x)\d x\right|.
\end{align}
From \eqref{C.}, we have
\begin{align}\label{appl5.4}
	&\left|\int_0^1 \mathcal{C}_1''(\y_e+\theta\z)(\w_i\otimes\w_j)\d\theta\right|\nonumber\\&\leq(r-1)
	\left|\int_0^1 \hspace{-2mm} |\y_e+\theta\z|^{r-3} \left(((\y_e+\theta\z)\cdot\w_i)\w_j +((\y_e+\theta\z)\cdot\w_j) \w_i+(\y_e+\theta\z)(\w_i\cdot\w_j)\right)\d\theta\right|\nonumber\\&\quad+(r-1)(r-3)
	\left|\int_0^1 |\y_e+\theta\z|^{r-5} ((\y_e+\theta\z)\cdot\w_i)((\y_e+\theta\z)\cdot\w_j) (\y_e+\theta\z)\d\theta\right|\nonumber\\&\leq3(r-1)|\y_e+\theta\z|^{r-2} |\w_i||\w_j|+(r-1)(r-3)|\y_e+\theta\z|^{r-2}|\w_i||\w_j|\nonumber\\&\leq 2^{r-3}r(r-1)\left(|\y_e|^{r-2}+|\z|^{r-2}\right)|\w_i||\w_j|.
\end{align}
Using \eqref{appl5.4} and \eqref{ef} in \eqref{appl5.3}, we get
\begin{align}\label{appl5.4.0}
	\left|\left(\int_0^1 \mathcal{C}_1''(\y_e+\theta\z)(\w_i\otimes\w_j)\d\theta,\w_k\right)\right|\leq 2^{r-3}r(r-1)\left(\frac{2}{\mathrm{L}^d}\right)^{\frac{3}{2}}\int_{\mathbb{T}^d}
	\left(|\y_e|^{r-2}+|\z|^{r-2}\right)\d x.
\end{align} 
Now, we calculate 
\begin{align*}
\int_{\mathbb{T}^d} |\z(x)|^{r-2}\d x=\int_{\mathbb{T}^d}\bigg|\sum\limits_{k=1}^n z_k \w_k \bigg|^{r-2}\d x\leq |\v|^{r-2}\int_{\mathbb{T}^d}\bigg(\sum\limits_{k=1}^n |\w_k(x)|^2\bigg)^{\frac{r-2}{2}}\d x\leq 
|\v|^{r-2}\bigg(\frac{2n}{\mathrm{L}^d}\bigg)^{\frac{r-2}{2}}\mathrm{L}^d.
\end{align*}
Thus by the Sobolev embedding and the above inequality, we obtain from \eqref{appl5.4.0}
\begin{align}\label{appl5.6}
	\left|\left(\int_0^1 \mathcal{C}_1''(\y_e+\theta\z) (\w_i\otimes\w_j)\d\theta,\w_k\right)\right|\leq 2^{r-3}r(r-1) \left(\frac{2}{\mathrm{L}^d}\right)^{\frac{3}{2}} \bigg(C_2 +|\v|^{r-2}\bigg(\frac{2n}{\mathrm{L}^d}\bigg)^{\frac{r-2}{2}}\mathrm{L}^d \bigg),
%	\nonumber\\&=2^{r-2}r(r-1) \left(\frac{2}{\mathrm{L}^d}\right)^{\frac{1}{2}} 	\left(C_2 +|\v|^{r-2}\left(n\sqrt{\frac{2}{\mathrm{L}^d}}\right)^{r-2}\right)
\end{align}
where $\|\y_{e}\|_{\widetilde{\mathbb{L}}^{r-2}}^{r-2}\leq C_2$ (see \eqref{eg} below). Similarly, one can determine
\begin{align}\label{appl5.7}
\left|\left(\int_0^1 \mathcal{C}_2''(\y_e+\theta\z) (\w_i\otimes\w_j)\d\theta,\w_k\right)\right|\leq 2^{r-3}r(r-1) \left(\frac{2}{\mathrm{L}^d}\right)^{\frac{3}{2}} 
\bigg(C_3+|\v|^{q-2}\bigg(\frac{2n}{\mathrm{L}^d}\bigg)^{\frac{q-2}{2}}\bigg),
\end{align}
where $\|\y_{e}\|_{\widetilde{\mathbb{L}}^{q-2}}^{r-2}\leq C_3$ (see \eqref{eg} below).  Using \eqref{appl5.6}-\eqref{appl5.7}, we deduce from \eqref{appl5.2} that
\begin{align*}
|\mathcal{N}(\v)|_{\mathbb{R}^n}&\leq2^{r-3}r(r-1) \left(\frac{4n}{\mathrm{L}^d}\right)^{\frac{3}{2}}\beta \bigg(C_2 +|\v|^{r-2}\bigg(\frac{2n}{\mathrm{L}^d}\bigg)^{\frac{r-2}{2}}\mathrm{L}^d \bigg)|\v|^2_{\mathbb{R}^n}
\nonumber\\&+2^{r-3}r(r-1) \left(\frac{4n}{\mathrm{L}^d}\right)^{\frac{3}{2}}|\gamma|\bigg(C_3+|\v|^{q-2}\bigg(\frac{2n}{\mathrm{L}^d}\bigg)^{\frac{q-2}{2}}\bigg)|\v|^2_{\mathbb{R}^n},
\end{align*}
which implies 
\begin{align*}
	|\mathcal{N}(\v)|_{\mathbb{R}^n}\leq\gamma_2\left(|\v|^2_{\mathbb{R}^n}+|\v|^{r}_{\mathbb{R}^n}+|\v|^{q}_{\mathbb{R}^n}\right),
\end{align*}
where $\gamma_2:=2^{r-2}r(r-1) \left(\frac{4n}{\mathrm{L}^d}\right)^{\frac{3}{2}} \max\bigg\{\beta C_2+|\gamma|C_3,\beta\bigg(\frac{2n}{\mathrm{L}^d}\bigg)^{\frac{r-2}{2}}, |\gamma|\bigg(\frac{2n}{\mathrm{L}^d}\bigg)^{\frac{r-2}{2}}\bigg\}.$ Therefore, from \eqref{stbl} and \eqref{appl9}, we obtain
\begin{align}\label{appl5.9}
|\v(t)|_{\mathbb{R}^n}\leq \hat{M}e^{-\sigma t}|\v_0|_{\mathbb{R}^n}+\hat{M}\gamma_0'\int_0^t e^{-\sigma (t-s)}|\v(s)|^2_{\mathbb{R}^n}\d s+\hat{M}\gamma_2\int_0^t e^{-\sigma (t-s)}(|\v(s)|^{r}_{\mathbb{R}^n}+|\v(s)|^{q}_{\mathbb{R}^n})\d s,
\end{align}
where $\gamma_0'=\gamma_0+\gamma_2.$ We calculate by using Young's inequality as 
\begin{align*}
|\v|^{q}_{\mathbb{R}^n}=(|\v|^{q}_{\mathbb{R}^n})^{\frac{r-q}{(r-1)q}}(|\v|^{q}_{\mathbb{R}^n})^{\frac{r(q-1)}{q(r-1)}} \leq C_4|\v|^{r}_{\mathbb{R}^n}+\frac{\sigma}{4\gamma_2}|\v|_{\mathbb{R}^n},
\end{align*}
where $C_4:=\left[\frac{4\gamma_2}{\sigma}\left(\frac{r-q}{r-1}\right)\right]^{\frac{r-q}{q-1}}\left(\frac{q-1}{r-1}\right).$ Similarly, we calculate
\begin{align*}
	|\v|^2_{\mathbb{R}^n}=(|\v|^2_{\mathbb{R}^n})^{\frac{r-3}{2(r-1)}}(|\v|^2_{\mathbb{R}^n})^{\frac{r+1}{2(r-1)}}\leq
C_5|\v|^{\frac{r+1}{2}}_{\mathbb{R}^n}+\frac{\sigma}{4\gamma_0'}|\v|_{\mathbb{R}^n},
\end{align*}
where $C_5:=\left[\frac{2\gamma_0'}{\sigma}\frac{(r-3)}{(r-1)}\right]^{\frac{r-3}{2}}\left(\frac{2}{r-1}\right)$. Using the above estimates in \eqref{appl5.9}, we get
\begin{align*}
 |\v(t)|_{\mathbb{R}^n}&\leq \hat{M}e^{-\sigma t}|\v_0|_{\mathbb{R}^n}+\hat{M}\gamma_0'C_5\int_0^t e^{-\sigma (t-s)}|\v(s)|^{\frac{r+1}{2}}_{\mathbb{R}^n}\d s \nonumber\\&\quad +\hat{M}(1+C_4)\gamma_2\int_0^t e^{-\sigma (t-s)}|\v(s)|^{r}_{\mathbb{R}^n}\d s+\frac{\hat{M}\sigma}{2}\int_0^t e^{-\sigma(t-s)}|\v(s)|_{\mathbb{R}^n}\d s.
\end{align*}
Now by applying the Gronwall inequality, we obtain
\begin{align}\label{appl5.9.1}
	|\v(t)|_{\mathbb{R}^n}\leq \hat{M}|\v_0|_{\mathbb{R}^n}\exp\left(-\frac{\sigma}{2}t+(1+C_4)\gamma_2\int_0^t |\v(s)|^{r-1}_{\mathbb{R}^n}\d s+\gamma_0'C_5\int_0^t |\v(s)|^{\frac{r-1}{2}}_{\mathbb{R}^n}\d s\right),
\end{align}
for all $t>0$. Similar to the case $r=3$ with $\gamma=0$, let us suppose that the initial data $\v_0$ is such that $M|\v_0|_{\mathbb{R}^n}<\rho$ for some $\rho>0$. We want to then show that $|\v(t)|_{\mathbb{R}^n}<\rho$ for all $t>0.$ Suppose it is not true. Then as $\v(\cdot)$ is continuous function, we can find some $T_1>0$ such that 
\begin{align}\label{appl5.9.2}
	|\v(t)|_{\mathbb{R}^n}<\rho \  \text{ for } \ t\in(0,T_1) \ \text{ and } \ |\v(T_1)|_{\mathbb{R}^n}=\rho.
\end{align}
Then from \eqref{appl5.9.1}, we calculate
\begin{align*}
|\v(T_1)|_{\mathbb{R}^n}\leq \hat{M}|\v_0|_{\mathbb{R}^n} \exp\left\{\left(-\frac{\sigma}{2}+(1+C_4)\gamma_2\rho^{r-1} +\gamma_0'C_5\rho^{\frac{r-1}{2}}\right)t\right\}.
\end{align*}
We see that the quadratic equation $(1+C_4)\gamma_2\left(\rho^{\frac{r-1}{2}}\right)^2+\gamma_0'C_5\rho^{\frac{r-1}{2}}-\frac{\sigma}{2}=0$ has a positive discriminant, so it has real roots. Thus for 
\begin{align*}
	\rho<\rho_1:=\left(\frac{-\gamma_0'C_5+\sqrt{(\gamma_0'C_5)^2+2(1+C_4)\gamma_2\sigma}}{2(1+C_4)\gamma_2}\right)^{\frac{2}{r-1}},
\end{align*}
we have $|\v(T_1)|_{\mathbb{R}^n}<\rho$, which contradicts \eqref{appl5.9.2}. Thus, we conclude that $|\v(t)|_{\mathbb{R}^n}<\rho$ on $t\in(0,\infty).$ Proceeding similar to the case $r=3$ with $\gamma=0$, we can find the existence of $\delta>0$, arbitrarily chosen on $(0,\infty),$ such that $\lim\limits_{t\to\infty} e^{\delta t}|\v(t)|_{\mathbb{R}^n}=0.$ Furthermore, the solution $\z(t)$ of \eqref{appl4.0} is defined on $[0,\infty)$ for the feedback controller $\mathbf{\overline{U}}(t)=\mathrm{R}\mathcal{P}_{\mathcal{K}}\z(t)$, and the exponential stability condition $\lim\limits_{t\to\infty} e^{\delta t}|\z(t)|_{\mathbb{R}^n}=0$ holds.
\end{proof}

\begin{appendix}
	\renewcommand{\thesection}{\Alph{section}}
	\numberwithin{equation}{section}\section{Supporting results}\label{sec2}
The purpose of this section is to introduce some basic results regarding the  nonlinear operators  appearing in the system  \eqref{1p44}. 
\begin{proposition}\label{prop3.1}
	Define the operator $\mathcal{M}(\cdot):\mathrm{D}(\mathcal{M})\to\H$ by
	\begin{align*}
		\mathcal{M}(\cdot)=\mu\A+\wi{\mathcal{B}}(\cdot)+\alpha\I+\beta\wi{\mathcal{C}_1}(\cdot)+\gamma\wi{\mathcal{C}_2}(\cdot), 
	\end{align*}
	where $\mathrm{D}(\mathcal{M})=\{\y\in\V\cap\wi\L^{r+1}:\A\y\in\H\}.$ Then, for $d=2,3$ with $r>3$, the operator $\mathcal{M}+\kappa\mathrm{I}$ is $m$-accretive in $\H$ with $\D(\mathcal{M}+\kappa\mathrm{I})=\D(\A)$ for some sufficiently large $\kappa>0$. 
\end{proposition}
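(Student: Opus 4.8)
The plan is to establish the two defining properties of an $m$-accretive operator on the Hilbert space $\H$: \textbf{(i)} \emph{accretivity} of $\mathcal{M}+\kappa\I$, that is, $(\mathcal{M}(\y_1)-\mathcal{M}(\y_2)+\kappa(\y_1-\y_2),\y_1-\y_2)\ge 0$ for all $\y_1,\y_2\in\D(\A)$, and \textbf{(ii)} the \emph{range condition} $\mathrm{R}(\I+\lambda(\mathcal{M}+\kappa\I))=\H$ for some $\lambda>0$; for an accretive operator the latter is equivalent to $m$-accretivity (Minty's theorem). Throughout I write $\w=\y_1-\y_2$, $\u_1=\y_1+\y_e$, $\u_2=\y_2+\y_e$, so that $\u_1-\u_2=\w$ and $\y_e\in\D(\A)\hookrightarrow\V\cap\wi\L^{r+1}$ is fixed.

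For \textbf{(i)}, the linear part contributes the nonnegative quantity $\mu\|\nabla\w\|_{\H}^2+\alpha\|\w\|_{\H}^2$. The convective part collapses, thanks to $b(\u_1,\w,\w)=0$, to $(\wi{\mathcal{B}}(\y_1)-\wi{\mathcal{B}}(\y_2),\w)=b(\w,\u_2,\w)=-b(\w,\w,\u_2)$, which I bound by $|b(\w,\w,\u_2)|\le\frac{\mu}{2}\|\nabla\w\|_{\H}^2+\frac{1}{2\mu}\int_{\mathbb{T}^d}|\w|^2|\u_2|^2\d x$. The crucial point, where $r>3$ enters, is that Young's inequality with exponents $\frac{r-1}{2}$ and $\frac{r-1}{r-3}$ gives $\int_{\mathbb{T}^d}|\w|^2|\u_2|^2\d x\le\varepsilon\||\u_2|^{\frac{r-1}{2}}\w\|_{\H}^2+C_\varepsilon\|\w\|_{\H}^2$. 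The monotonicity estimate \eqref{C1} applied to $\mathcal{C}_1$ with arguments $\u_1,\u_2$ supplies the positive term $\frac{\beta}{2}\||\u_2|^{\frac{r-1}{2}}\w\|_{\H}^2$ (and the analogous $\u_1$ term), which absorbs the $\varepsilon$-contribution for $\varepsilon$ small. The pumping term $\gamma\wi{\mathcal{C}}_2$ is controlled the same way: pointwise $|\mathcal{C}_2(\u_1)-\mathcal{C}_2(\u_2)|\le C(|\u_1|^{q-1}+|\u_2|^{q-1})|\w|$, and since $q<r$, Young's inequality converts $\int_{\mathbb{T}^d}|\u_i|^{q-1}|\w|^2\d x$ into $\varepsilon\||\u_i|^{\frac{r-1}{2}}\w\|_{\H}^2+C_\varepsilon\|\w\|_{\H}^2$, again absorbed by \eqref{C1}. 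Collecting terms leaves a single residual $-C\|\w\|_{\H}^2$, which is dominated by $\kappa\|\w\|_{\H}^2$ once $\kappa>C$; this proves accretivity.

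For \textbf{(ii)}, it suffices to show that $\mathcal{M}+c\I$ maps $\D(\A)$ onto $\H$ for $c$ large (taking $c=\frac{1+\lambda\kappa}{\lambda}$). Given $\f\in\H$, I would solve $\mathcal{M}(\y)+c\y=\f$ by a Faedo--Galerkin scheme in $\mathrm{span}\{\w_1,\dots,\w_m\}$, where $\{\w_k\}$ are the eigenfunctions of $\A$. Testing the Galerkin equation with $\y_m$ yields a uniform $\V\cap\wi\L^{r+1}$ bound from the coercive contributions of $c\I$, $\mu\A$ and $\beta\mathcal{C}_1$ (the last giving $\|\y_m\|_{\wi\L^{r+1}}^{r+1}$). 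To force the limit into $\D(\A)$, I would then test with $\A\y_m$ and invoke the torus identity \eqref{3}: the $\beta\mathcal{C}_1$ term produces the nonnegative dissipation $\int_{\mathbb{T}^d}|\nabla\u_m|^2|\u_m|^{r-1}\d x$ together with the gradient-of-power term, and this, via Remark \ref{C2} and interpolation, dominates both the convective term $(\mathcal{B}(\u_m),\A\u_m)$ and the $\gamma\mathcal{C}_2$ term, giving a uniform bound on $\|\A\y_m\|_{\H}$. Passing to the limit with the Minty--Browder (monotonicity) argument then identifies the weak limits of the nonlinear terms, yielding $\y\in\D(\A)$ solving $\mathcal{M}(\y)+c\y=\f$ in $\H$; hence $\mathrm{R}(\I+\lambda(\mathcal{M}+\kappa\I))=\H$ and $m$-accretivity follows, with $\D(\mathcal{M}+\kappa\I)=\D(\A)$.

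I expect the \textbf{main obstacle} to be precisely the $\D(\A)$ a priori estimate in step (ii): controlling the convective term $(\mathcal{B}(\y+\y_e),\A(\y+\y_e))$ by the dissipation generated by the absorption term. This is exactly where the hypothesis $r>3$ is indispensable (and where, in the critical case $d=r=3$, the condition $2\beta\mu>1$ would be needed), and it relies essentially on the commutator identity \eqref{3}, which is available only because $\mathcal{P}$ and $-\Delta$ commute on $\mathbb{T}^d$.
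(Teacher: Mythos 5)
Your proposal is correct. Its first half is essentially identical to the paper's Step I: the paper also reduces the convective difference to $b(\y_1-\y_2,\y_2+\y_e,\y_1-\y_2)$, bounds it by $\frac{\mu}{2}\|\nabla(\y_1-\y_2)\|_{\H}^2+\frac{1}{2\mu}\||\y_2+\y_e|(\y_1-\y_2)\|_{\H}^2$, applies Young's inequality with exponents $\frac{r-1}{2}$ and $\frac{r-1}{r-3}$ (exactly where $r>3$ enters), treats the pumping term the same way, and absorbs everything into the coercive terms supplied by \eqref{C1}, ending with monotonicity of $\mathcal{M}+\kappa\I$ for $\kappa$ large.

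Where you genuinely diverge is the maximality/range part. The paper runs no Galerkin scheme: having recorded (by citation to \cite{sKM}) that $\mathcal{M}+\kappa\I$ is also demicontinuous and coercive from the reflexive space $\V\cap\wi\L^{r+1}$ into $\V'+\wi\L^{\frac{r+1}{r}}$, it invokes the abstract Browder--Minty surjectivity result \cite[Example 2.3.7]{OPHB} to conclude at once that the realization of $\mathcal{M}+\kappa\I$ in $\H$ is maximal monotone with domain containing $\D(\A)$; the inclusion $\D(\mathcal{M}+\kappa\I)\subseteq\D(\A)$ is then obtained separately, by pairing $\mathcal{S}(\y):=\mathcal{M}(\y)+\kappa\y$ with $\A(\y+\y_e)$ and using the torus identity \eqref{3} together with the estimates \eqref{Ayc1}--\eqref{Ayc2} --- the very estimates you propose to perform at the Galerkin level. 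Your Faedo--Galerkin route buys self-containedness and rigor: because the eigenfunction basis makes the approximating spaces invariant under $\A$, the $\A\y_m$-test is fully justified, the limit lands in $\D(\A)$ directly, and the range condition and the domain identification come in one stroke (maximality of an accretive operator satisfying the range condition then forbids any larger domain); the price is the extra machinery of the finite-dimensional solvability and the Minty--Browder identification of the nonlinear limits, for which you still need the hemicontinuity the paper verifies in its Step II. The paper's route buys brevity by delegating existence to abstract monotone-operator theory, at the cost that the inner product of $\mathcal{S}(\y)$ with $\A(\y+\y_e)$ is taken directly on elements of $\D(\mathcal{S})$, where it is a priori formal and would be justified by precisely the kind of approximation you propose. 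Both routes rest on the two ingredients you correctly flagged as essential: the absorption-driven control of the convective term (requiring $r>3$, or $2\beta\mu>1$ when $d=r=3$), and the commutation of $\mathcal{P}$ with $-\Delta$ on $\mathbb{T}^d$ underlying \eqref{3}.
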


\begin{proof}
	We shall first show that $\mathcal{M}+\kappa\mathrm{I}$ is a monotone operator for some sufficiently large $\kappa>0$. Then we prove that $\mathcal{M}+\kappa\mathrm{I}$ is coercive and demicontinuous, which imply  the $m$-accretivity of the operator $\mathcal{M}+\kappa\I$. Finally, we demonstrate that $\D(\mathcal{M}+\kappa\mathrm{I})=\D(\A)$, for some sufficiently large $\kappa>0$. The proof is divided into the following steps:
	
	\vskip 2mm
	\noindent
	\textbf{Step I:} \textsl{The operator $\mathcal{M}+\kappa\mathrm{I}$ is monotone for some sufficiently large $\kappa>0$.} Let us define $\hat{\y}:=\y_1-\y_2$. We estimate $	\langle\A\y_1-\A\y_2,\hat{\y}\rangle $ by	using an integration by parts as
	\begin{align}\label{3.1}
		\langle\A\y_1-\A\y_2,\hat{\y}\rangle =\|\nabla\hat{\y}\|^2_{\H}.
	\end{align}
	Note that $\langle\mathcal{B}(\y_1+\y_e,\hat{\y}),\hat{\y}\rangle=0$, which  along with H\"older's and Young's inequalities implies
	\begin{align}\label{3.2}
		|\langle\wi{\mathcal{B}}(\y_1)-\wi{\mathcal{B}}(\y_2),\hat{\y}\rangle|=	|\langle\mathcal{B}(\y_1+\y_e)-\mathcal{B}(\y_2+\y_e),\hat{\y}\rangle| 
		%&=|\langle\mathcal{B}(\y_1+\y_e,\hat{\y}),\hat{\y}\rangle+\langle\mathcal{B}(\hat{\y},\y_2+\y_e),\hat{\y}\rangle|\nonumber\\& 
		\leq \frac{\mu }{2}\|\nabla\hat{\y}\|_{\H}^2+\frac{1}{2\mu }\||\y_2+\y_e|\hat{\y}\|_{\H}^2.
	\end{align} 
	Using H\"older's and Young's inequalities, we estimate the term $\||\y_2+\y_e|\hat{\y}\|_{\H}^2$ as
	\begin{align}\label{3.3}
		\int_{\mathbb{T}^d}|\y_2(x)+\y_e(x)|^2|\hat{\y}(x)|^2\d x &=\int_{\mathbb{T}^d}|\y_2(x)+\y_e(x)|^2|\hat{\y}(x)|^{\frac{4}{r-1}}|\hat{\y}(x)|^{\frac{2(r-3)}{r-1}}\d x\nonumber\\&\leq\frac{\varepsilon\beta\mu }{2}\||\y_2+\y_e|^{\frac{r-1}{2}}\hat{\y}\|_{\H}^2+\frac{r-3}{r-1}\left[\frac{4}{\varepsilon\beta\mu (r-1)}\right]^{\frac{2}{r-3}}\|\hat{\y}\|_{\H}^2,
	\end{align}
	for $r>3$ and $\varepsilon>0$. Using \eqref{3.3} in \eqref{3.2}, we find 
	\begin{align}\label{3.4}
		|\langle\wi{\mathcal{B}}(\y_1)-\wi{\mathcal{B}}(\y_2),\hat{\y}\rangle|\leq
		\frac{\mu }{2}\|\nabla\hat{\y}\|_{\H}^2 +\frac{\varepsilon\beta}{4}\||\y_2+\y_e|^{\frac{r-1}{2}}\hat{\y}\|_{\H}^2 +\varrho_{\eps}\|\hat{\y}\|_{\H}^2,
	\end{align}
	where $\varrho_{\eps}:=\frac{r-3}{2\mu(r-1)}\left[\frac{4}{\varepsilon\beta\mu (r-1)}\right]^{\frac{2}{r-3}}.$
	From \eqref{C1}, we can write 
	\begin{align}\label{3.5}
		\beta\langle\wi{\mathcal{C}}_1(\y_1)-\wi{\mathcal{C}}_1(\y_2),\hat{\y}\rangle&\geq \frac{\beta}{2}\left(\||\y_1+\y_e|^{\frac{r-1}{2}}\hat{\y}\|_{\H}^2+\||\y_2+\y_e|^{\frac{r-1}{2}}\hat{\y}\|_{\H}^2\right)\nonumber\\&\geq\frac{\beta}{4}
		\left(\||\y_1+\y_e|^{\frac{r-1}{2}}\hat{\y}\|_{\H}^2+\||\y_2+\y_e|^{\frac{r-1}{2}}\hat{\y}\|_{\H}^2\right)+\frac{\beta}{2^{r}}\|\hat{\y}\|_{\wi\L^{r+1}}^{r+1}.
	\end{align}
	Using Taylor's formula and \eqref{C} (for $\mathcal{C}_2(\cdot)$), we calculate
	\begin{align}\label{c2.1}
		|\gamma\langle\wi{\mathcal{C}}_2(\y_1)-\wi{\mathcal{C}}_2(\y_2),\hat{\y}\rangle|\leq|\gamma|
		2^{q-2}q\int_{\mathbb{T}^d} (|\y_1(x)+\y_e(x)|^{q-1}+|\y_2(x)+\y_e(x)|^{q-1})|\hat{\y}(x)|^2 \d x. 
	\end{align}
	Using H\"older's and Young's inequalities, we obtain
	\begin{align}\label{c2.1.1}
		\int_{\mathbb{T}^d} |\y_1(x)+\y_e(x)|^{q-1}|\hat{\y}(x)|^2 \d x&=\int_{\mathbb{T}^d} |\y_1(x)+\y_e(x)|^{q-1}|\hat{\y}|^{2\left(\frac{q-1}{r-1}\right)}|\hat{\y}(x)|^{2\left(1-\frac{q-1}{r-1}\right)}\d x \nonumber\\&\leq 
		\frac{\wi{\varepsilon}\beta}{2^{q}q|\gamma|}\||\y_1+\y_e|^{\frac{r-1}{2}}\hat{\y}\|_{\H}^2+
		\rho_{\wi{\varepsilon}}\|\hat{\y}\|_{\H}^2,
	\end{align}
	where $\rho_{\wi{\varepsilon}}:=\frac{r-q}{r-1}\left[\frac{2^{q}q|\gamma|(q-1)}{\wi{\varepsilon}\beta(r-1)}\right]^{\frac{q-1}{r-q}}$ and $\wi{\varepsilon}>0$. Similarly, one can calculate 
	\begin{align}\label{c2.1.2}
		\int_{\mathbb{T}^d}|\y_2(x)+\y_e(x)|^{q-1}|\hat{\y}(x)|^2 \d x\leq
		\frac{\varepsilon\beta}{2^{q}q|\gamma|}\||\y_2+\y_e|^{\frac{r-1}{2}}\hat{\y}\|_{\H}^2+\rho_{\varepsilon}\|\hat{\y}\|_{\H}^2,
	\end{align}
	where $\rho_{\varepsilon}:=\frac{r-q}{r-1}\left[\frac{2^{q}q|\gamma|(q-1)}{\varepsilon\beta(r-1)}\right]^{\frac{q-1}{r-q}}$. Using \eqref{c2.1.1}-\eqref{c2.1.2} in \eqref{c2.1}, we write 
	\begin{align}\label{c2.2}
		|\gamma\langle\wi{\mathcal{C}}_2(\y_1)-\wi{\mathcal{C}}_2(\y_2),\hat{\y}\rangle|\leq
		\frac{\wi{\varepsilon}\beta}{4}\||\y_1+\y_e|^{\frac{r-1}{2}}\hat{\y}\|_{\H}^2+\frac{\varepsilon\beta}{4}\||\y_2+\y_e|^{\frac{r-1}{2}}\hat{\y}\|_{\H}^2+(\rho_{\wi{\varepsilon}}+\rho_{\varepsilon})\|\hat{\y}\|_{\H}^2.	
	\end{align}
	Combining \eqref{3.1}, \eqref{3.4}-\eqref{3.5} and \eqref{c2.2}, we conclude 
	\begin{align*}
		\langle(\mathcal{M}+\kappa\I)(\y_1)-(\mathcal{M}+\kappa\I)(\y_2),\hat{\y}\rangle&\geq\frac{\mu }{2}\|\nabla\hat{\y}\|_{\H}^2+(\alpha+\kappa-(\varrho_{\eps}+\rho_{\wi{\eps}}+\rho_{\eps}))\|\hat{\y}\|_{\H}^2+\frac{\beta}{2^{r}} \|\hat{\y}\|_{\wi\L^{r+1}}^{r+1}\nonumber\\&\quad+ \frac{\beta}{2}\left(\frac{1}{2}-\varepsilon\right)\||\y_2+\y_e|^{\frac{r-1}{2}}\hat{\y}\|_{\H}^2+\frac{\beta}{4}\left(1- \wi\varepsilon\right) \||\y_1+\y_e|^{\frac{r-1}{2}}\hat{\y}\|_{\H}^2.
	\end{align*}
	Choosing $0<\varepsilon\leq\frac{1}{2}$, $0<\wi{\varepsilon}\leq1$ and $\kappa\geq\varrho_{\eps}+\rho_{\wi{\eps}}+\rho_{\eps}$, we obtain 
	\begin{align*}
		\langle(\mathcal{M}+\kappa\I)(\y_1)-(\mathcal{M}+\kappa\I)(\y_2),\y_1-\y_2\rangle
		\geq0.
	\end{align*} 
	Thus the operator $\mathcal{M}+\kappa\I$ is monotone for any $\kappa\geq\varrho_{\eps}+\rho_{\wi{\eps}}+\rho_{\eps}$.
	\vskip 2mm
	\noindent
	\textbf{Step II:} \textsl{The operator $\mathcal{M}+\kappa\I$ is demicontinuous and coercive.} One can show that the operator $\mathcal{M}+\kappa\I:\V\cap\widetilde{\L}^{r+1}\to\V'+\widetilde{\L}^{\frac{r+1}{r}}$ is demicontinuous and coercive in a similar way as in the proof of \cite[Proposition 3.1]{sKM}.
	\vskip 2mm
	\noindent
	\textbf{Step IV:} \textsl{ $\D(\mathcal{M}+\kappa\I)=\D(\A)$ for sufficiently large $\kappa>0$}. 	Note that the space $\V\cap\widetilde{\L}^{r+1}$ is reflexive. Since the operator $\mathcal{M}+\kappa\I$ is monotone, hemicontinuous and coercive from $\V\cap\widetilde{\L}^{r+1}$ to $\V'+\widetilde{\L}^{\frac{r+1}{r}}$, then by an application of \cite[Example 2.3.7]{OPHB}, we obtain that the operator $\mathcal{M}+\kappa\I$ is maximal monotone in $\H$ with domain $\mathrm{D}(\mathcal{M}+\kappa\I)\supseteq\mathrm{D}(\mathrm{A})$ for any $\kappa\geq\varrho_{\eps}+\rho_{\wi{\eps}}+\rho_{\eps}$.
	
	Let us define
	\begin{align*}
		\mathcal{S}(\y):=\mathcal{M}(\y)+\kappa\y=\mu\A\y+\wi{\mathcal{B}}(\y)+\alpha\y+\beta\wi{\mathcal{C}}_1(\y)+\gamma\wi{\mathcal{C}}_2(\y)+\kappa\y,
	\end{align*}
	with $\D(\mathcal{S})=\{\y\in\V\cap\wi\L^{r+1}:\mu\A\y+\wi{\mathcal{B}}(\y)+(\alpha+\kappa)\y+\beta\wi{\mathcal{C}}_1(\y)+
	\gamma\wi{\mathcal{C}}_2(\y)\in\H\}$.  %It is immediate from \eqref{OpS} that $\mathrm{D}(\mathcal{S})\supseteq\mathrm{D}(\mathrm{A})$. In fact, we shall prove that  $\mathrm{D}(\mathcal{S})=\mathrm{D}(\mathrm{A})$, for sufficiently large $\kappa>0$. For that purpose, 
	Let us take the inner product in $\mathcal{S}(\y)$ with $\A(\y+\y_e)$ to get
	\begin{align*}
		&\mu\|\A(\y+\y_e)\|_{\H}^2+\beta(\mathcal{C}_1(\y+\y_e),\A(\y+\y_e))+(\alpha+\kappa)\|\nabla(\y+\y_e)\|_{\H}^2\nonumber\\&=(\mathcal{S}(\y),\A(\y+\y_e))-(\mathcal{B}(\y+\y_e),\A(\y+\y_e))-\gamma(\mathcal{C}_2(\y+\y_e),\A(\y+\y_e))+(\mathscr{S}(\y_e),\A(\y+\y_e)),
	\end{align*}
	where $\mathscr{S}(\cdot):=\mu\A+\mathcal{B}(\cdot)+\beta\mathcal{C}_1(\cdot)+ \gamma\mathcal{C}_2(\cdot)+(\alpha+\kappa)\I.$  Note that 
	\begin{align*}
		\|\mathscr{S}(\y_e)\|_{\H}\leq C\|\A\y_e\|_{\H}\ \text{ for all }\ \y_e\in\D(\A). 
	\end{align*}
	A calculation similar to \eqref{3.4} gives
	\begin{align}\label{Ayc1}
		|(\B(\y+\y_e),\A(\y+\y_e))|\leq\frac{\mu}{2}\|\A(\y+\y_e)\|_{\H}^2+\frac{\beta}{4} \||\y+\y_e|^{\frac{r-1}{2}}\nabla(\y+\y_e)\|_{\H}^2+\eta_1\|\nabla(\y+\y_e)\|_{\H}^2,
	\end{align}
	where $\eta_1:=\frac{r-3}{2\mu(r-1)}\left[\frac{4}{\beta\mu(r-1)}\right]^{\frac{2}{r-3}}.$ We now calculate by using H\"older's inequality
	\begin{align}\label{Ayc3}
		&\int_{\mathbb{T}^d}|\y(x)+\y_e(x)|^{q-1}|\nabla(\y(x)+\y_e(x))|^2\d x\nonumber\\&\leq
		\left(\int_{\mathbb{T}^d}|\y(x)+\y_e(x)|^{r-1}|\nabla(\y(x)+\y_e(x))|^2\d x\right)^\frac{q-1}{r-1}
		\left(\int_{\mathbb{T}^d}|\nabla(\y(x)+\y_e(x))|^2 \d x\right)^\frac{r-q}{r-1}.
	\end{align}
	Thus, from \eqref{3}, \eqref{Ayc3}, the identity $\nabla|\y|^k=k \sum\limits_{j=1}^dy_j\nabla y_j|\y|^{k-2}$, and H\"older's and Young's inequalities, we calculate 
	\begin{align}\label{Ayc2}
		|\gamma(\mathcal{C}_2(\y+\y_e),\mathrm{A}(\y+\y_e))|&\leq
		|\gamma|\||\y+\y_e|^{\frac{q-1}{2}}\nabla(\y+\y_e)\|_{\H}^{2} +4|\gamma|\left[\frac{q-1}{(q+1)^2}\right]\|\nabla|\y+\y_e|^{\frac{q+1}{2}}\|_{\H}^{2}
		\nonumber\\&\leq q|\gamma|\||\y+\y_e|^{\frac{r-1}{2}}\nabla(\y+\y_e)\|_{\H}^{\frac{2(q-1)}{r-1}}\|\nabla(\y+\y_e)\|_{\H}^{\frac{2(r-q)}{r-1}}\nonumber\\&\leq %C(q)|\gamma|\||\y+\y_e|^{\frac{r-1}{2}}\nabla(\y+\y_e)\|_{\H}^{\frac{2(q-1)}{r-1}}\|\y+\y_e\|_{\V}^{\frac{2(r-q)}{r-1}}\nonumber\\&\leq
		\frac{\beta}{4}\||\y+\y_e|^{\frac{r-1}{2}}\nabla(\y+\y_e)\|_{\H}^2+\eta_2\|\nabla(\y+\y_e)\|_{\H}^2,
	\end{align}
	where $\eta_2:=(q|\gamma|)^{\frac{r-1}{r-q}}\left[\frac{4}{\beta}\left(\frac{q-1}{r-1}\right)\right]^ {\frac{q-1}{r-q}}\left({\frac{r-q}{r-1}}\right).$ From \eqref{Ayc1}-\eqref{Ayc2}, we obtain 
	\begin{align*}
		&\frac{\mu}{2}\|\A(\y+\y_e)\|_{\H}^2+\frac{\beta}{2}\||\y+\y_e|^{\frac{r-1}{2}}\nabla(\y+\y_e)\|_{\H}
		^{2}+(\alpha+\kappa-(\eta_1+\eta_2))\|\nabla(\y+\y_e)\|_{\H}^2\nonumber\\&\leq (\mathcal{S}(\y),\A(\y+\y_e))+(\mathscr{S}(\y_e),\A(\y+\y_e)).
	\end{align*}
	For any $\kappa\geq\eta_1+\eta_2$, in view of the Cauchy-Schwarz inequality, we reach at
	\begin{align*}
		\frac{\mu}{2}\|\A(\y+\y_e)\|_{\H}\leq\|\mathcal{S}(\y)\|_{\H}+C\|\A\y_e\|_{\H},
	\end{align*}
	which provides  $\D(\mathcal{S})=\D(\mathcal{M}+\kappa\I)\subseteq\D(\A)$. 
	
	Now, choose $\kappa$ sufficiently large so that $\kappa\geq\max\{\varrho_{\eps}+\rho_{\wi{\eps}}+\rho_{\eps},\eta_1+\eta_2\}$. This completes the proof. 
\end{proof}
\begin{remark}\label{RK}
	For $d=r=3$, by the Sobolev embedding $\V\subset\wi{\L}^{4}$ so that $\V\cap\wi{\L}^{4}=\V$. We have from \eqref{C1} that 
	\begin{align}\label{rk1}
		\beta\langle\wi{\mathcal{C}}_1(\y_1)-\wi{\mathcal{C}}_1(\y_2),\y_1-\y_2\rangle\geq \frac{\beta}{2}\||\y_1+\y_e|(\y_1-\y_2)\|_{\H}^2+\frac{\beta}{2}\||\y_2+\y_e|(\y_1-\y_2)\|_{\H}^2.		
	\end{align}
	Moreover, we calculate
	\begin{align}\label{rk2}
		|\langle\wi{\mathcal{B}}(\y_1)-\wi{\mathcal{B}}(\y_2),\y_1-\y_2\rangle|\leq
		\mu\|\nabla(\y_1-\y_2)\|_{\H}^2+\frac{1}{4\mu}\||\y_2+\y_e|(\y_1-\y_2)\|_{\H}^2	.
	\end{align} 
	Modifying the calculation \eqref{c2.1.1} for $r=3$, we get  
	\begin{align*}
		\int_{\mathbb{T}^d} |\y_1(x)+\y_e(x)|^{q-1}|\y_1(x)-\y_2(x)|^2 \d x\leq \frac{1}{2^{q}q|\gamma|\mu}\||\y_1+\y_e|(\y_1-\y_2)\|_{\H}^2+
		\wi\rho_1\|\y_1-\y_2\|_{\H}^2,
	\end{align*}
	where $\wi\rho_1:=\rho_{\frac{1}{\mu\beta}}=\left(2^{q-1}q|\gamma|\mu(q-1)\right)^{\frac{q-1}{3-q}}\left(\frac{3-q}{2}\right)$. Similarly from \eqref{c2.1.2} for $r=3$, we infer
	\begin{align}\label{rk3}
		\int_{\mathbb{T}^d}|\y_2(x)+\y_e(x)|^{q-1}|\y_1(x)-\y_2(x)|^2 \d x\leq
		\frac{\left(\beta-\frac{1}{2\mu}\right)}{2^{q}q|\gamma|}\||\y_2+\y_e|(\y_1-\y_2)\|_{\H}^2+\wi\rho_2\|\y_1-\y_2\|_{\H}^2	,
	\end{align}
	where $\wi\rho_2:=\rho_{\frac{1}{\beta}\left(\beta-\frac{1}{2\mu}\right)}=\left(\frac{2^{q-1}q|\gamma|(q-1)}{\left(\beta-\frac{1}{2\mu}\right)}\right)^{\frac{q-1}{3-q}}\left(\frac{3-q}{2}\right)$. Thus, for $r=3$, from \eqref{rk1}-\eqref{rk3}, we conclude 
	\begin{align}\label{rk4}
		\langle(\mathcal{M}+\kappa\I)(\y_1)-(\mathcal{M}+\kappa\I)(\y_2),\y_1-\y_2\rangle
		&\geq
		(\kappa-(\wi\rho_1+\wi\rho_2))\|\y_1-\y_2\|_{\H}^2+\frac{1}{2}\left(\beta-\frac{1}{2\mu}\right)\||\y_1+\y_e|(\y_1-\y_2)\|_{\H}^2\nonumber\\&\quad+\frac{1}{4}
		\left(\beta-\frac{1}{2\mu}\right)\||\y_2+\y_e|(\y_1-\y_2)\|_{\H}^2
	\end{align}
	From \eqref{rk4}, it is clear that for $\kappa\geq\wi\rho_1+\wi\rho_2$ and $2\beta\mu>1,$ the nonlinear operator $\mathcal{M}(\cdot)+\kappa\I$ is globally monotone (in fact strongly). The other properties like demincontinuity, coercivity, etc. of the operator $\mathcal{M}(\cdot)+\kappa\I$ for $d=r=3$ can be established in a similar way as for $r>3$ in Proposition \ref{prop3.1}.
\end{remark}

\begin{proposition}\label{prop3.3}
	Let $\mathcal{K}$ be a subset of $\H$ which satisfies Hypothesis \ref{AssupK}. Let $\I_\mathcal{K}$ be the indicator function defined in \eqref{appl1.2}. Define the multivalued operator  $\mathfrak{G}:\mathrm{D}(\mathfrak{G})\multimap\H$ by 
	\begin{equation*}
		\mathfrak{G}(\cdot) = \mu\mathrm{A} +\wi{\mathcal{B}}(\cdot)+\alpha\I+\beta\wi{\mathcal{C}}_1(\cdot)+\gamma\wi{\mathcal{C}}_2(\cdot)+\partial\I_{\mathcal{K}}(\cdot)+\kappa\I,
	\end{equation*}
	with the domain $\mathrm{D}(\mathfrak{G})=\{\y\in\H: \varnothing \neq \mathfrak{G}(\y)\subset\H\}$, for $\kappa>0$. Then, for $r>3$ in $d=2,3$ and $d=r=3$ with $2\beta\mu>1$, $\mathfrak{G}$ is a maximal monotone operator in $\H\times\H$ for sufficiently large $\kappa>0$ with $\mathrm{D}(\mathfrak{G}) = \mathrm{D}(\mathrm{A})\cap\mathrm{D}(\partial\I_{\mathcal{K}})= \mathrm{D}(\mathrm{A})\cap\mathcal{K}$.  
\end{proposition}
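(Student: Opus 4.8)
The plan is to prove monotonicity directly and then obtain maximality through Minty's theorem, i.e. by solving the stationary inclusion $\f\in(\I+\mathfrak{G})(\y)$ for every $\f\in\H$. Monotonicity is immediate: writing $\mathfrak{G}=(\mathcal{M}+\kappa\I)+\partial\I_{\mathcal{K}}$, the operator $\mathcal{M}+\kappa\I$ is monotone for $\kappa$ large by Proposition \ref{prop3.1} (and by Remark \ref{RK} when $d=r=3$ with $2\beta\mu>1$), while $\partial\I_{\mathcal{K}}$ is monotone as a subdifferential, and a sum of monotone operators is monotone. The domain identity is also quickly settled: $\mathfrak{G}(\y)\neq\emptyset$ forces both $\mathcal{M}(\y)\in\H$, i.e. $\y\in\D(\mathcal{M})=\D(\A)$ by Proposition \ref{prop3.1}, and $\partial\I_{\mathcal{K}}(\y)\neq\emptyset$, i.e. $\y\in\D(\partial\I_{\mathcal{K}})=\mathcal{K}$; the reverse inclusion is clear, so $\D(\mathfrak{G})=\D(\A)\cap\mathcal{K}$.

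For maximality I would approximate the subdifferential by its Yosida approximation. Fix $\f\in\H$ and $\lambda>0$, and consider
$$\y_\lambda+\mu\A\y_\lambda+\wi{\mathcal{B}}(\y_\lambda)+(\alpha+\kappa)\y_\lambda+\beta\wi{\mathcal{C}}_1(\y_\lambda)+\gamma\wi{\mathcal{C}}_2(\y_\lambda)+(\partial\I_{\mathcal{K}})_\lambda(\y_\lambda)=\f.$$
Since $\mathcal{M}+(\kappa+1)\I$ is $m$-accretive with domain $\D(\A)$ (Proposition \ref{prop3.1}) and $(\partial\I_{\mathcal{K}})_\lambda$ is monotone, everywhere defined and Lipschitz, hence demicontinuous, the perturbation results \cite[Corollary 1.4 and 1.5]{VB2} guarantee that the left-hand operator is $m$-accretive; in particular this equation has a unique solution $\y_\lambda\in\D(\A)$.

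The heart of the argument is a bound uniform in $\lambda$, and this is where Hypothesis \ref{AssupK} enters. The condition $(\I+\lambda\A)^{-1}\mathcal{K}\subset\mathcal{K}$ is equivalent, via the exponential formula and the closedness of $\mathcal{K}$, to the invariance $e^{-t\A}\mathcal{K}\subset\mathcal{K}$ for all $t\geq0$. Since $e^{-t\A}$ is a contraction and $e^{-t\A}\P_{\mathcal{K}}\y\in\mathcal{K}$, one has $\mathrm{dist}(e^{-t\A}\y,\mathcal{K})\leq\|e^{-t\A}(\y-\P_{\mathcal{K}}\y)\|_{\H}\leq\mathrm{dist}(\y,\mathcal{K})$, so by the semigroup property $t\mapsto\tfrac12\,\mathrm{dist}(e^{-t\A}\y,\mathcal{K})^2$ is non-increasing; differentiating at $t=0$ and using $\nabla\bigl(\tfrac12\,\mathrm{dist}(\cdot,\mathcal{K})^2\bigr)(\y)=\y-\P_{\mathcal{K}}\y$ gives
$$(\A\y,\y-\P_{\mathcal{K}}\y)\geq0,\qquad\text{hence}\qquad (\A\y,(\partial\I_{\mathcal{K}})_\lambda(\y))\geq0\quad\text{for all }\y\in\D(\A),\ \lambda>0.$$
Testing the approximate equation first with $\y_\lambda$ (using the coercivity from Step II of Proposition \ref{prop3.1} and $((\partial\I_{\mathcal{K}})_\lambda(\y_\lambda),\y_\lambda)\geq0$, valid since $\boldsymbol{0}\in\mathcal{K}$) yields uniform bounds for $\y_\lambda$ in $\V\cap\wi\L^{r+1}$. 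Testing then with $\A\y_\lambda$ (equivalently $\A(\y_\lambda+\y_e)$ as in Step IV of Proposition \ref{prop3.1}) and discarding the favourable term $\mu((\partial\I_{\mathcal{K}})_\lambda(\y_\lambda),\A\y_\lambda)\geq0$ reproduces that computation, giving a uniform bound $\|\A\y_\lambda\|_{\H}\leq C$; the uniform bounds for operators of Section \ref{thm} then make $\wi{\mathcal{B}}(\y_\lambda),\wi{\mathcal{C}}_1(\y_\lambda),\wi{\mathcal{C}}_2(\y_\lambda)$ bounded in $\H$, and the equation forces $(\partial\I_{\mathcal{K}})_\lambda(\y_\lambda)$ to be bounded in $\H$ as well.

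Finally I would pass to the limit $\lambda\to0^+$ along a subsequence: $\y_\lambda\rightharpoonup\y$ in $\D(\A)$, strongly in $\V$ and $\H$ by the compact embedding $\V\hookrightarrow\H$, and $(\partial\I_{\mathcal{K}})_\lambda(\y_\lambda)\rightharpoonup\xi$ in $\H$. The nonlinear terms converge by the strong convergence in $\V\cap\wi\L^{r+1}$ together with the Minty--Browder argument already used in Section \ref{thm}, while $\xi\in\partial\I_{\mathcal{K}}(\y)$ follows from $\|(\I+\lambda\partial\I_{\mathcal{K}})^{-1}\y_\lambda-\y_\lambda\|_{\H}=\lambda\|(\partial\I_{\mathcal{K}})_\lambda(\y_\lambda)\|_{\H}\to0$ and the weak--strong closedness of the maximal monotone operator $\partial\I_{\mathcal{K}}$. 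The limit $\y\in\D(\A)\cap\mathcal{K}$ then satisfies $\f\in(\I+\mathfrak{G})(\y)$, so $\mathrm{R}(\I+\mathfrak{G})=\H$ and $\mathfrak{G}$ is maximal monotone by Minty's theorem. I expect the main obstacle to be precisely the uniform $\H$-estimate on $(\partial\I_{\mathcal{K}})_\lambda(\y_\lambda)$: the subdifferential term has no sign control when tested against $\y_\lambda$ alone, and only the invariance inequality $(\A\y_\lambda,(\partial\I_{\mathcal{K}})_\lambda(\y_\lambda))\geq0$ coming from Hypothesis \ref{AssupK} lets the $\D(\A)$-estimate close in the presence of the fast-growing nonlinearity $\beta\wi{\mathcal{C}}_1$.
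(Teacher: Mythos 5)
Your proposal is correct and follows essentially the same route as the paper's proof: monotonicity of the sum $\mathcal{S}+\partial\I_{\mathcal{K}}$, Yosida regularization of $\partial\I_{\mathcal{K}}$, existence and uniqueness for the approximate equation via the perturbation results of \cite{VB2}, the $\V\cap\wi\L^{r+1}$ and $\D(\A)$ estimates closed by the sign condition $((\partial\I_{\mathcal{K}})_\lambda(\y_\lambda),\A\y_\lambda)\geq0$ stemming from Hypothesis \ref{AssupK}, and a limit passage as $\lambda\to0^+$ yielding $\mathrm{R}(\I+\mathfrak{G})=\H$ and the domain identity. The only minor deviations are that you re-derive the key inequality $(\A\y,(\partial\I_{\mathcal{K}})_\lambda(\y))\geq0$ from scratch via semigroup invariance of $\mathcal{K}$ and the distance function, where the paper simply cites \cite[Proposition 1.1, part (iv), pp. 183]{VB1}, and that you identify the two weak limits via resolvent convergence plus demiclosedness of $\partial\I_{\mathcal{K}}$ together with a Minty--Browder argument, where the paper instead invokes the two-parameter monotonicity lemma \cite[Proposition 1.3, part (iv), pp. 49]{VB2}; both identifications are valid.
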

\begin{proof}
	
	It has been shown in Proposition \ref{prop3.1} that  the operator $\mathcal{S}(\cdot)=\mu\mathrm{A} +\wi{\mathcal{B}}(\cdot)+ \alpha\I+\beta\wi{\mathcal{C}}_1(\cdot)+\gamma\wi{\mathcal{C}}_2(\cdot)+\kappa\mathrm{I}$ is maximal monotone with domain $\mathrm{D}(\mathcal{S})=\mathrm{D}(\mathrm{A})$ in $\H$, for sufficiently large $\kappa>0$. On the other hand, the subdifferential operator $\partial\I_{\mathcal{K}}(\cdot)$ is a maximal monotone operator in $\H\times\H$ (see \cite[Theorem 2.1, pp. 62]{VB2}).  Therefore $\mathfrak{G}$ is the monotone operator as it is the sum of two monotone operators $\mathcal{S}(\cdot)$ and $\partial\I_{\mathcal{K}}(\cdot)$. But the sum of two maximal monotone operator is not maximal monotone always. In order to prove $\mathfrak{G}$ is maximal monotone, we need to show that 
	\begin{align}\label{3.2.1}
		\mathrm{R}(\mathrm{I}+\mathfrak{G})=\H.
	\end{align} 
	%We prove this claim in the following steps: 
	Let us first consider the equation
	\begin{equation}\label{3.2.2}
		\y_{\lambda}+\mu\mathrm{A}\y_{\lambda}+\wi{\mathcal{B}}(\y_{\lambda})+\alpha\y_\lambda+\beta\wi{\mathcal{C}}_1 (\y_{\lambda})+\gamma\wi{\mathcal{C}}_2(\y_\lambda)+(\partial\I_{\mathcal{K}})_{\lambda}(\y_{\lambda})+\kappa\y_{\lambda}=\f,
	\end{equation}
	where $(\partial\I_{\mathcal{K}})_{\lambda}$  is the Yosida approximation of $\partial\I_{\mathcal{K}}$ (see \eqref{subdiff}). We next show that the equation \eqref{3.2.2} has at most one solution (uniqueness) for any given $\f\in\H$.% Then we show that that the equation \eqref{3.2.2} has at least one solution for any given $\f\in\H$.
	\vskip 2mm
	\noindent
	\emph{Uniqueness.} Let $\f\in\H$ be arbitrary but fixed. 
	%	 We approximate the inclusion problem 
	%	\begin{equation}
		%\y+\mu\mathrm{A}\y+\wi{\mathcal{B}}(\y)+\beta\wi{\mathcal{C}}_1(\y)+\alpha\wi{\mathcal{C}}_2(\y)+\partial\I_{\mathcal{K}}(\y)+\kappa\y\ni \f,
		%	\end{equation}
	Let us set $\z_\lambda=\y_{\lambda}^1-\y_{\lambda}^2$, where $\y_{\lambda}^1$ and $\y_{\lambda}^2$ are any two solutions of \eqref{3.2.2}. Then we have 
	\begin{align}\label{3.2.2.}
		&(\alpha+\kappa+1)\z_\lambda+\mu\A\z_\lambda+(\partial\I_{\mathcal{K}})_{\lambda}(\y_{\lambda}^1)-(\partial\I_{\mathcal{K}})_{\lambda}(\y_{\lambda}^2)\nonumber\\&=
		-(\wi{\mathcal{B}}(\y_{\lambda}^1)-\wi{\mathcal{B}}(\y_{\lambda}^2))-
		\beta(\wi{\mathcal{C}}_1(\y_{\lambda}^1)-\wi{\mathcal{C}}_1(\y_{\lambda}^2))-
		\gamma(\wi{\mathcal{C}}_2(\y_{\lambda}^1)-\wi{\mathcal{C}}_2(\y_{\lambda}^2)).
	\end{align}
	Taking the inner product with $\z_{\lambda}$ in \eqref{3.2.2.} and using  \eqref{3.4}-\eqref{3.5} and \eqref{c2.2} for $\varepsilon=1, \ \wi\varepsilon=\frac{1}{2}$, we get 
	\begin{align*}
		\frac{\mu}{2}\|\nabla\z_\lambda\|_{\H}^2+(\alpha+\kappa+1-(\varrho_1+\rho_{\frac{1}{2}}+\rho_1))\|\z_\lambda\|_{\H}^2\leq0,
	\end{align*}
	where we have used the monotonicity of $(\partial\I_{\mathcal{K}})_{\lambda}(\cdot)$ also. Thus, for any $\kappa\geq\varrho_1+\rho_{\frac{1}{2}}+\rho_1$, we deduce $\y_{\lambda}^1=\y_{\lambda}^2$ and this completes the uniqueness of the Yosida approximated problem \eqref{3.2.2}.
	\vskip 2mm
	\noindent
	\emph{Uniform bounds of $\y_\lambda$.} 
	Taking the inner product with $\y_\lambda$ in \eqref{3.2.2}, we obtain
	\begin{align}\label{ey}
		&(\alpha+\kappa+1)\|\y_\lambda\|_{\H}^2+\mu\|\nabla\y_\lambda\|_{\H}^2+\beta(\mathcal{C}_1(\y_{\lambda}+\y_e)-\mathcal{C}_1(\y_e),\y_\lambda)+((\partial\I_{\mathcal{K}})_{\lambda}(\y_\lambda),\y_\lambda)\nonumber\\&=(\f,\y_\lambda)-(\mathcal{B}(\y_{\lambda}+\y_e)-\mathcal{B}(\y_e),\y_\lambda)-\gamma(\mathcal{C}_2(\y_\lambda+\y_e)-\mathcal{C}_2(\y_e),\y_\lambda) .
	\end{align}
	From \eqref{3.4}-\eqref{3.5} and \eqref{c2.2}, we have
	\begin{align}
		|(\mathcal{B}(\y_{\lambda}+\y_e)-\mathcal{B}(\y_e),\y_\lambda)|  &\leq
		\frac{\mu }{2}\|\nabla\y_\lambda\|_{\H}^2 +\frac{\beta}{8}\||\y_e|^{\frac{r-1}{2}}\y_\lambda\|_{\H}^2 +\varrho_{\frac{1}{2}}\|\y_\lambda\|_{\H}^2, \label{e1} \\
		|\gamma(\mathcal{C}_2(\y_\lambda+\y_e)-\mathcal{C}_2(\y_e),\y_\lambda)| & \leq
		\frac{\beta}{4}\||\y_\lambda+\y_e|^{\frac{r-1}{2}}\y_\lambda\|_{\H}^2+\frac{\beta}{8}\||\y_e|^{\frac{r-1}{2}}\y_\lambda\|_{\H}^2+(\rho_{\frac{1}{2}}+\rho_{1})\|\y_\lambda\|_{\H}^2, \label{e2}
	\end{align}
	and
	\begin{align}\label{e3}
		\beta(\mathcal{C}_1(\y_{\lambda}+\y_e)-\mathcal{C}_1(\y_e),\y_\lambda)&\geq\frac{\beta}{4}
		\left(\||\y_\lambda+\y_e|^{\frac{r-1}{2}}\y_\lambda\|_{\H}^2+\||\y_e|^{\frac{r-1}{2}}\y_\lambda\|_{\H}^2\right)+\frac{\beta}{2^{r}}\|\y_\lambda\|_{\wi\L^{r+1}}^{r+1}.
	\end{align}
	From the monotonicity of $(\partial\I_{\mathcal{K}})_{\lambda}(\cdot)$, we have
	\begin{align*}
		-((\partial\I_{\mathcal{K}})_{\lambda}(\y_\lambda),\y_\lambda)\leq-((\partial\I_{\mathcal{K}})_{\lambda}(\boldsymbol{0}),\y_\lambda)\leq\|(\partial\I_{\mathcal{K}})_{\lambda}(\boldsymbol{0})\|_{\H}\|\y_\lambda\|_{\H}\leq \|(\partial\I_{\mathcal{K}})(\boldsymbol{0})\|_{\H}\|\y_\lambda\|_{\H}=0.
	\end{align*}
	%Then by using \cite[Proposition 1.3, part (ii), pp. 49]{VB2}, Cauchy-Schwarz, Young's and Poincar\'e's inequalities, we calculate
	%\begin{align}\label{3.2.3.3}
	%-((\partial\I_{\mathcal{K}})_{\lambda}(\boldsymbol{0}),\y_\lambda+\y_e)&\leq\|(\partial\I_{\mathcal{K}})_{\lambda}(\boldsymbol{0})\|_{\H}\|\y_\lambda+\y_e\|_{\H}\leq \frac{1}{\sqrt{\lambda_1}}\|(\partial\I_{\mathcal{K}})_{\lambda}(\boldsymbol{0})\|_{\H}\|\y_\lambda+\y_e\|_{\V}\nonumber\\&\leq\frac{2}{\mu\lambda_1}\|(\partial\I_{\mathcal{K}})(\boldsymbol{0})\|_{\H}^2+\frac{\mu}{8}\|\y_\lambda+\y_e\|_{\V}^2.
	%\end{align}
	Therefore, using \eqref{e1}-\eqref{e3}, and Cauchy-Schwarz and Young's inequalities in \eqref{ey}, we conclude
	\begin{align}\label{eyy}
		\left(\alpha+\kappa+\frac{1}{2}-\varrho_{\frac{1}{2}}-\rho_{\frac{1}{2}}-\rho_{1}\right)\|\y_\lambda\|_{\H}^2+\frac{\mu}{2}\|\nabla\y_\lambda\|_{\H}^2+\frac{\beta}{2^{r}}\|\y_\lambda\|_{\wi\L^{r+1}}^{r+1}\leq\frac{1}{2}\|\f\|_{\H}^2,
	\end{align}
	for any $\kappa\geq\varrho_{\frac{1}{2}}+\rho_{\frac{1}{2}}+\rho_{1}$. We rewrite \eqref{3.2.2} as
	\begin{align}\label{eAy}
		&(\alpha+\kappa+1)(\y_{\lambda}+\y_e)+\mu\mathrm{A}(\y_{\lambda}+\y_e)+\mathcal{B}(\y_{\lambda}+\y_e)+\beta\mathcal{C}_1(\y_{\lambda}+\y_e)+\gamma\mathcal{C}_2(\y_\lambda+\y_e)+(\partial\I_{\mathcal{K}})_{\lambda}(\y_{\lambda})\nonumber\\&=\f+\y_e+\mathscr{S}(\y_e).
	\end{align}
	We now take the inner product in \eqref{eAy} with $\A\y_\lambda=\A(\y_\lambda+\y_e)-\A\y_e$ and rearrange the terms to obtain 
	\begin{align}\label{3.2.5}
		&(\alpha+\kappa+1)\|\nabla(\y_\lambda+\y_e)\|_{\H}^{2}+\mu\|\mathrm{A}(\y_\lambda+\y_e)\|_{\H}^{2} +(\mathcal{B}(\y_{\lambda}+\y_e),\mathrm{A}(\y_\lambda+\y_e))\nonumber\\& +\beta(\mathcal{C}_1(\y_\lambda+\y_e),\A(\y_\lambda+\y_e))+\gamma(\mathcal{C}_2(\y_\lambda+\y_e),\mathrm{A}(\y_\lambda+\y_e))+((\partial\I_{\mathcal{K}})_{\lambda}(\y_{\lambda}),\A\y_\lambda)\nonumber\\&= (\f,\A\y_\lambda)+(\y_e+\mathscr{S}(\y_e),\A\y_\lambda)+(\alpha+\kappa+1)((\y_\lambda+\y_e),\A\y_e)+\mu(\A(\y_\lambda+\y_e),\A\y_e)\nonumber\\&\quad+(\mathcal{B}(\y_{\lambda}+\y_e),\mathrm{A}\y_e)+\gamma(\mathcal{C}_2(\y_\lambda+\y_e),\mathrm{A}\y_e)+\beta(\mathcal{C}_1(\y_\lambda+\y_e),\mathrm{A}\y_e).
	\end{align}	

	We consider the cases $r>3$ for $d=2,3$ and $d=r=3$ with $2\beta\mu>1$ separately.
	\vskip 2mm
	\noindent
	\textbf{Case I:} \emph{$r>3$ for $d=2,3$.} The following estimates are calculated in a similar fashion as  \eqref{Ayc1}-\eqref{Ayc2}:
	\begin{align}
		|(\B(\y_{\lambda}+\y_e),\A(\y_{\lambda}+\y_e))|&\leq
		\frac{\mu}{2}\|\A(\y_{\lambda}+\y_e)\|_{\H}^2+\frac{\beta}{4} \||\y_\lambda+\y_e|^{\frac{r-1}{2}}\nabla(\y_\lambda+\y_e)\|_{\H}^2\eta_1\|\nabla(\y_\lambda+\y_e)\|_{\H}^2,\label{3.2.7.0}\\
		|(\B(\y_{\lambda}+\y_e),\A\y_e)|&\leq
		\frac{\mu}{2}\|\A\y_e\|_{\H}^2+\frac{\beta}{16} \||\y_\lambda+\y_e|^{\frac{r-1}{2}}\nabla(\y_\lambda+\y_e)\|_{\H}^2+\varrho_{\frac{1}{4}}\|\nabla(\y_\lambda+\y_e)\|_{\H}^2, \\
		|\gamma(\mathcal{C}_2(\y_\lambda+\y_e),\mathrm{A}(\y_\lambda+\y_e))|&\leq
		\frac{\beta}{16}\||\y_\lambda+\y_e|^{\frac{r-1}{2}}\nabla(\y_\lambda+\y_e)\|_{\H}^2+\eta_2\|\nabla(\y_\lambda+\y_e)\|_{\H}^2,\label{3.2.7.3}
	\end{align}
	where  $\varrho_{\frac{1}{4}}:=\frac{r-3}{2\mu(r-1)}\left[\frac{16}{\beta\mu(r-1)}\right]^{\frac{2}{r-3}}.$ By using interpolation inequality and Remark \ref{C2}, we calculate
	\begin{align}\label{e5}
		|(\mathcal{C}_1(\y_{\lambda}+\y_e),\A\y_e)|&\leq\|\mathcal{C}_1(\y_{\lambda}+\y_e)\|_{\H}
		\|\A\y_e\|_{\H}\leq\|\y_\lambda+\y_e\|_{\wi\L^{2r}}^{r}\|\A\y_e\|_{\H}\nonumber\\&\leq
		\|\y_\lambda+\y_e\|_{\wi\L^{3(r+1)}}^{\frac{3(r-1)}{4}}\|\y_\lambda+\y_e\|_{\wi\L^{r+1}}^{\frac{r+3}{4}}\|\A\y_e\|_{\H}\nonumber\\&\leq C\||\y_\lambda+\y_e|^{\frac{r-1}{2}}\nabla(\y_\lambda+\y_e)\|_{\H}^{\frac{3(r-1)}{2(r+1)}}\|\y_\lambda+\y_e\|_{\wi\L^{r+1}}^{\frac{r+3}{4}}\|\A\y_e\|_{\H}\nonumber\\&\leq\frac{\beta}{16}\||\y_\lambda+\y_e|^{\frac{r-1}{2}}\nabla(\y_\lambda+\y_e)\|_{\H}^2+C\left(\|\y_\lambda+\y_e\|_{\wi\L^{r+1}}^{r+1}\right)^{\frac{r+3}{r+7}}.
	\end{align}
	Similarly, applying interpolation inequality for $2\leq2q\leq3(r+1)$, we find
	\begin{align}\label{e6}
		|(\mathcal{C}_2(\y_{\lambda}+\y_e),\A\y_e)|&\leq\|\mathcal{C}_2(\y_{\lambda}+\y_e)\|_{\H}
		\|\A\y_e\|_{\H}\leq\|\y_\lambda+\y_e\|_{\wi\L^{2q}}^{q}\|\A\y_e\|_{\H}\nonumber\\&\leq\frac{\beta}{16}\||\y_\lambda+\y_e|^{\frac{r-1}{2}}\nabla(\y_\lambda+\y_e)\|_{\H}^2+C\|\y_\lambda+\y_e\|_{\H}^{\frac{3(r+1)-2q}{3(r+1)-3q+1}}.
	\end{align}
	Moreover, from \cite[Proposition 1.1, part (iv), pp. 183]{VB1}, we infer that  
	\begin{align}\label{e7}
		((\partial\I_{\mathcal{K}})_{\lambda}(\y_{\lambda}),\mathrm{A}\y_{\lambda})\geq0.
	\end{align}
	Combining \eqref{3.2.7.0}-\eqref{e7} and \eqref{eyy}, and using the Cauchy-Schwarz and Young's  inequalities, we conclude from \eqref{3.2.5} that 
	\begin{align}\label{r3}
		&\left(\alpha+\kappa+1-\varrho_{\frac{1}{4}}-\eta_1-\eta_2\right)\|\nabla(\y_\lambda+\y_e)\|_{\H}^{2}+\frac{\mu}{2}\|\mathrm{A}(\y_\lambda+\y_e)\|_{\H}^{2} +\frac{\beta}{2}\||\y_\lambda+\y_e|^{\frac{r-1}{2}}\nabla(\y_\lambda+\y_e)\|_{\H}^2 \nonumber\\&\leq\frac{2}{\mu}\|\f\|_{\H}^2+\frac{2}{\mu}\|\mathscr{S}(\y_e)+\y_e\|_{\H}^2+\mu\|\A\y_e\|_{\H}^2\nonumber\\&\quad+C\left(\|\y_\lambda+\y_e\|_{\H}^2+\|\y_\lambda+\y_e\|_{\H}^{\frac{3(r+1)-2q}{3(r+1)-3q+1}}+\left(\|\y_\lambda+\y_e\|_{\wi\L^{r+1}}^{r+1}\right)^{\frac{r+3}{r+7}}\right)\leq C, 
	\end{align}	
	where $C=C(\mu,|\gamma|,\beta,\|\f\|_{\H},\|\A\y_e\|_{\H})>0$ is a constant.
	\vskip 2mm
	\noindent
	\textbf{Case II:} \emph{$d=r=3$ with $2\beta\mu>1.$} Let $0<\theta<1.$
	By using the Cauchy-Schwarz and Young's inequalities
	\begin{align}\label{3.2.8.1}
		|(\B(\y_{\lambda}+\y_e),\A(\y_{\lambda}+\y_e))|&\leq\||\y_{\lambda}+\y_e|\nabla(\y_{\lambda}+\y_e)\|_{\H}\|\A(\y_{\lambda}+\y_e)\|_{\H}\nonumber\\&\leq\theta\mu\|\A(\y_{\lambda}+\y_e)\|_{\H}^2+\frac{1}{4\theta\mu}\||\y_{\lambda}+\y_e|\nabla(\y_{\lambda}+\y_e)\|_{\H}^2.
	\end{align}
	Similarly, we can write
	\begin{align}\label{3.2.8.11}
		|(\mathcal{B}(\y_{\lambda}+\y_e),\mathrm{A}\y_e)|&\leq4\theta\mu\|\A\y_e\|_{\H}^2+\frac{1}{16\theta\mu}\||\y_{\lambda}+\y_e|\nabla(\y_{\lambda}+\y_e)\|_{\H}^2.
	\end{align}
	For $r=3$, we write \eqref{3} as
	\begin{align}
		(\mathcal{C}_1(\y_{\lambda}+\y_e),\mathrm{A}(\y_{\lambda}+\y_e))&=\||\y_{\lambda}+\y_e|\nabla (\y_{\lambda}+\y_e)\|_{\H}^{2}+\frac{1}{2}\|\nabla|(\y_{\lambda}+\y_e)|^2\|_{\H}^{2}.\label{3.2.8.1.1}
	\end{align}
	Similarly for $r=3$ from \eqref{3.2.7.3}, we have
	\begin{align}\label{3.2.8.1.2}
		|\gamma(\mathcal{C}_2(\y_\lambda+\y_e),\mathrm{A}(\y_\lambda+\y_e))|\leq\frac{1}{16\theta\mu} \|(\y_\lambda+\y_e)\nabla(\y_\lambda+\y_e)\|_{\H}^2+\wi\eta_1\|\nabla(\y_\lambda+\y_e)\|_{\H}^2,
	\end{align}
	where $\wi\eta_1:=\left(8\theta\mu|\gamma|q(q-1)\right)^{\frac{3-q}{q-1}}\left({\frac{3-q}{2}}\right).$ Also, from \eqref{e5}-\eqref{e6}, we have
	\begin{align*}
		|(\mathcal{C}_1(\y_{\lambda}+\y_e),\A\y_e)|&\leq\frac{1}{16\theta\mu}\||\y_\lambda+\y_e|\nabla(\y_\lambda+\y_e)\|_{\H}^2+C\|\y_\lambda+\y_e\|_{\wi\L^{4}}^{\frac{12}{5}},\\
		|(\mathcal{C}_2(\y_{\lambda}+\y_e),\A\y_e)|&\leq\frac{1}{16\theta\mu}\||\y_\lambda+\y_e|\nabla(\y_\lambda+\y_e)\|_{\H}^2+C\|\y_\lambda+\y_e\|_{\H}^{\frac{12-2q}{13-3q}}.
	\end{align*}
	and
	\begin{align}\label{3.2.8.2}
		|(\f+\mathscr{S}(\y_e)+\y_e,\A(\y_{\lambda}+\y_e))|&\leq\|\f+\mathscr{S}(\y_e)+\y_e\|_{\H}\|\A(\y_{\lambda}+\y_e)\|_{\H}\nonumber\\&\leq\frac{(1-\theta)\mu}{2}\|\A(\y_{\lambda}+\y_e)\|_{\H}^2+\frac{1}{2(1-\theta)\mu}\|\f+\mathscr{S}(\y_e)+\y_e\|_{\H}^2,
	\end{align}
	where $0<\theta<1$ is chosen the same as in \eqref{3.2.8.1}.
	Using  the estimates \eqref{3.2.8.1}-\eqref{3.2.8.2} in \eqref{3.2.5} and the fact that 
	$((\partial\I_{\mathcal{K}})_{\lambda}(\y_{\lambda}),\mathrm{A}\y_{\lambda})\geq0,$ we obtain 
	\begin{align}\label{3.2.8.3}
		&\frac{\mu(1-\theta)}{2}\|\A(\y_{\lambda}+\y_e)\|_{\H}^2+\left(\beta-\frac{1}{2\theta\mu}\right)\||(\y_{\lambda}+\y_e)|\nabla(\y_{\lambda}+\y_e)\|_{\H}^2+\frac{\beta}{2}\|\nabla|(\y_{\lambda}+\y_e)|^2\|_{\H}^{2}\nonumber\\&\quad+(\alpha+\kappa+1-\wi\eta_1)\|\nabla(\y_\lambda+\y_e)\|_{\H}^{2}\leq C,
	\end{align}
	where  $C=C(\mu,|\gamma|,\beta,\|\f\|_{\H},\|\A\y_e\|_{\H})>0$ is a constant. 
	
	As $\y_e\in\D(\A),$ from \eqref{r3} (for $r>3$ with $\kappa\geq\varrho_{\frac{1}{4}}+\eta_1+\eta_2$) and \eqref{3.2.8.3} (for $r=3$ with $\kappa\geq\widetilde{\eta}_1$ and $2\beta\mu>1$), we conclude that the sequence $\{\y_\lambda\}_{\lambda>0}$ is a bounded sequence in $\D(\A)$. For sufficiently large $\kappa>0$, using H\"older's and Agmon's inequalities (see \cite[Lemma 13.2]{Agm}), \eqref{r3} (for $r>3$) and \eqref{3.2.8.3} (for $r=3$), we calculate
	\begin{align}\label{3.2.9}
		\|\mathcal{B}(\y_\lambda+\y_e)\|_{\H}&\leq\|((\y_\lambda+\y_e)\cdot\nabla)(\y_\lambda+\y_e)\|_{\H}\leq\|\y_\lambda+\y_e\|_{\wi\L^{\infty}}\|\nabla(\y_\lambda+\y_e)\|_{\H}
		\nonumber\\&\leq\|\nabla(\y_\lambda+\y_e)\|_{\H}
		\|\y_\lambda+\y_e\|_{\H}^{1-\frac{d}{4}}\|\y_\lambda+\y_e\|_{\H^2_{\mathrm{p}}}^{\frac{d}{4}}\leq C.
	\end{align}
	Moreover, using interpolation inequality, \eqref{eyy}, \eqref{r3} (for $r>3$), \eqref{3.2.8.3} (for $r=3$) and Remark \ref{C2}, we estimate 
	\begin{align}\label{3.2.10}
		\|\mathcal{C}_1(\y_\lambda+\y_e)\|_{\H}\leq\|\y_\lambda+\y_e\|_{\widetilde{\L}^{2r}}^{r}\leq
		\|\y_\lambda+\y_e\|_{\widetilde{\L}^{r+1}}^{\frac{r+3}{4}}\|\y_\lambda+\y_e\|_{\widetilde{\L}^{3(r+1)}}^{\frac{3(r-1)}{4}}\leq C. 
	\end{align}
	Similarly, using interpolation inequality with $2\leq2q\leq3(r+1)$, one can obtain
	\begin{align}\label{3.2.10.0}
		\|\mathcal{C}_2(\y_\lambda+\y_e)\|_{\H}\leq\|\y_\lambda+\y_e\|_{\widetilde{\L}^{2q}}^{q}&\leq
		\|\y_\lambda+\y_e\|_{\H}^{\frac{3(r+1)-2q}{3(r+1)-2}} \|\y_\lambda+\y_e\|_{\widetilde{\L}^{3(r+1)}}^{\frac{3(r+1)(q-1)}{3(r+1)-2}}\leq C.
	\end{align}
	Let us rewrite \eqref{3.2.2} as 
	\begin{equation}\label{3.2.11}
		\y_{\lambda}+\mathcal{S}(\y_{\lambda})+(\partial\I_{\mathcal{K}})_{\lambda}(\y_{\lambda})=\f.
	\end{equation} 
	Using \eqref{r3} (for $r>3$), \eqref{3.2.8.3} (for $r=3$), \eqref{3.2.9}-\eqref{3.2.10.0} in \eqref{3.2.11}, we deduce 
	\begin{align}\label{3.2.12}
		\|\mathcal{S}(\y_{\lambda})\|_{\H}\leq C \  \mbox{ and } \ \|(\partial\I_{\mathcal{K}})_{\lambda}(\y_{\lambda})\|_{\H}\leq C,
	\end{align}
	where $C=C(\mu,|\gamma|, \beta,\|\f\|_{\H},\|\A\y_e\|_{\H})>0$ is a constant.
	\vskip 2mm
	\noindent
	\emph{Passing limit as $\lambda\to0.$} By making the use of the Banach-Alaoglu theorem (see \cite{semi}) and denoting the subsequences again by $\y_\lambda$, we have the following convergences from \eqref{r3} (for $r>3$), \eqref{3.2.8.3} (for $r=3$) and \eqref{3.2.12}:
	
	\begin{equation}\label{3.2.13}
		\left\{
		\begin{aligned}
			\y_{\lambda}&\xrightharpoonup{w} \y, \ \text{ in } \  \V, \\
			\A\y_{\lambda}&\xrightharpoonup{w} \A\y, \ \text{ in }\ \H , 
		\end{aligned}	
		\right. \ \
		\left\{
		\begin{aligned}
			(\partial\I_{\mathcal{K}})_{\lambda}(\y_{\lambda})&\xrightharpoonup{w} \f_{1}, \ \text{ in }\ \H ,  \\ 
			\mathcal{S}(\y_{\lambda})&\xrightharpoonup{w} \f_{2}, \ \text{ in }\ \H .
		\end{aligned}	
		\right.
	\end{equation}
	Moreover, by using the compact embedding $\D(\A)\hookrightarrow\V$, we have $\y_\lambda\to\y$  in  $\V.$
	%\begin{align*}
	%	\y_\lambda\to\y \  \text{ in } \  \V.
	%\end{align*}
	%so that 
	%\begin{align}\label{v}
	%	\y_\lambda+\y_e\to\y+\y_e \  \  \text{in} \  \V,
	%\end{align}
	%since  $\y_e\in\D(\A)$.
	
	Finally, we pass the weak limit in \eqref{3.2.11} as $\lambda\to0$ to get $\y+\f_1+\f_2=\f$ in $\H$.
	Let us write \eqref{3.2.11} for $\lambda$ and $\hat{\lambda}$, subtract and then take the inner product with $\y_\lambda-\y_{\hat{\lambda}}$ to find 
	\begin{align}\label{3.2.14}
		((\partial\I_{\mathcal{K}})_\lambda(\y_{\lambda})-(\partial\I_{\mathcal{K}})_{\hat{\lambda}}(\y_{\hat{\lambda}}),\y_{\lambda}-\y_{\hat{\lambda}})+
		((\mathcal{S}+\mathrm{I})(\y_{\lambda})-(\mathcal{S}+\mathrm{I})(\y_{\hat{\lambda}}),\y_{\lambda}-\y_{\hat{\lambda}})=0,
	\end{align}
	for all $\lambda, \hat{\lambda}>0.$ By the monotonicity of $\wi{\mathcal{F}}(\cdot)+\I$, we obtain
	\begin{align}\label{v1}
		((\partial\I_{\mathcal{K}})_\lambda(\y_{\lambda})-(\partial\I_{\mathcal{K}})_{\hat{\lambda}}(\y_{\hat{\lambda}}),\y_{\lambda}-\y_{\hat{\lambda}})\leq 0.	
	\end{align}
	Then by an application of \cite[Proposition 1.3, part (iv), pp. 49]{VB2}, we conclude from \eqref{3.2.14}-\eqref{v1} that  
	\begin{align}\label{v2}
		(\y,\f_{1})\in\partial\I_{\mathcal{K}} \  \text{and} \ 	\lim\limits_{\lambda,{\hat{\lambda}}\to 0} ((\partial\I_{\mathcal{K}})_\lambda(\y_{\lambda})-(\partial\I_{\mathcal{K}})_{\hat{\lambda}}(\y_{\hat{\lambda}}),\y_{\lambda}-\y_{\hat{\lambda}})=0.
	\end{align}
	%In addition, we obtain $\f_{1}\in\partial\I_{\mathcal{K}}(\y)$. 
	Thus from \eqref{3.2.14}, we deduce 
	\begin{align}\label{3.2.15}
		\lim\limits_{\lambda,{\hat{\lambda}}\to 0} 	((\mathcal{S}+\mathrm{I})(\y_{\lambda})-(\mathcal{S}+\mathrm{I})(\y_{\hat{\lambda}}),\y_{\lambda}-\y_{\hat{\lambda}})=0.	
	\end{align}
	Since the operator $\mathcal{S}(\cdot)+\I$ is maximal monotone,  \eqref{3.2.13}, \eqref{3.2.15} and \cite[Lemma 1.3, pp. 49]{VB2} yield
	\begin{align}\label{3.2.16}
		(\y,\y+\f_{2})\in\mathcal{S}+\mathrm{I},
	\end{align}
	and it gives $\mathcal{S}(\y)=\f_{2}$ as $\mathcal{S}+\mathrm{I}$ is a single-valued operator.  This proves the surjectivity condition \eqref{3.2.1}.
	
	Now our aim is to show that $\mathrm{D}(\mathfrak{G}) = \mathrm{D}(\mathrm{A})\cap\mathrm{D} (\partial\I_{\mathcal{K}}).$ 
	Note that  $\mathfrak{G}=\mathcal{S}+\partial\I_{\mathcal{K}}$ implies   $\mathrm{D}(\mathrm{A})\cap\mathrm{D}(\partial\I_{\mathcal{K}})=\mathrm{D}(\mathrm{S})\cap\mathrm{D}(\partial\I_{\mathcal{K}})\subseteq\mathrm{D}(\mathfrak{G})$. It follows from the surjectivity condition \eqref{3.2.1} that for any $\f\in\H$ there exists a element $\y\in\D(\mathfrak{G})$ such that $\y+\mathfrak{G}(\y)\ni\f$. As we have seen in the previous step that $\y$ can be approximated by a sequence of solutions $\{\y_{\lambda}\}_{\lambda>0}$ of equation \eqref{3.2.2}. From \eqref{v2} and \eqref{3.2.16}, we infer that $\y\in\mathrm{D}(\mathcal{S})\cap\mathrm{D}(\partial\I_{\mathcal{K}})=\mathrm{D}(\mathrm{A}) \cap\mathrm{D} (\partial\I_{\mathcal{K}})$ and hence $\mathrm{D}(\mathfrak{G}) = \mathrm{D}(\mathrm{A})\cap\mathrm{D} (\partial\I_{\mathcal{K}}).$ 
\end{proof}

%\begin{remark}\label{rem1}
%From \eqref{3.2.8}, we have 
%\begin{align*}
%	\|\A\y_\lambda\|_{\H}\leq C.
%\end{align*}
%Let 
%\begin{align*}
%	\g_{\lambda}=\mu\mathrm{A}\w+\wi{\mathcal{B}}(\w)+\beta\wi{\mathcal{C}}(\w)+(\partial\I_{\mathcal{K}})_{\lambda}(\w)+(\kappa+1)\w,
%\end{align*}
%for a fixed $\lambda>0$ and $\w\in\D(\A)$. Then one can prove that analogous to \eqref{3.2.1} the following estimate holds
%	\begin{align*}
	%		\|\A\w\|_{\H}^2\leq C(1+\|\w\|_{\H}^2+\|\mu\mathrm{A}\w+\wi{\mathcal{B}}(\w)+\beta\wi{\mathcal{C}}(\w)+(\partial\I_{\mathcal{K}})_\lambda(\w)\|_{\H}^2),
	%	\end{align*}
%	for every $\w\in\D(\A), \  \lambda>0$ (for more details, see \cite[Step IV, Proposition 3.3]{sKM}).
%\end{remark}

	\section{Stationary problem}\label{SP}
	\numberwithin{equation}{section}
Let us consider the abstract formulation of \eqref{stability}, by taking the Leray projection $\mathcal{P}$, which is given by
\begin{align}\label{1ee}
	\mu\A\y_e+\mathcal{B}(\y_e)+\alpha\y_e+\beta\mathcal{C}_1(\y_e)+\gamma\mathcal{C}_2(\y_e)= \f_e \  \text{ in } \ \mathbb{T}^d,
\end{align}
where $\f_e:=\mathcal{P}\g_e\in\H$ is an external forcing. 
\vskip 2mm
\noindent

\begin{theorem}\label{STT}
For every $\f_{e}\in\H$,	there exists at least one strong  solution for the stationary problem \eqref{1e}. For
 \begin{align*}
	\min\left\{\mu,\alpha\right\}\geq2\mathfrak{K}_2+C\left(\frac{1}{\beta\mu}\|\f_e\|_{\H}^2+\frac{\mathfrak{K}_1}{\beta}|\mathbb{T}^d|\right)^{\frac{1}{2}},
\end{align*}
where  $\mathfrak{K}_1:=|\gamma|^{\frac{r+1}{r-q}}\left(\frac{2(q+1)}{\beta(r+1)}\right)^{\frac{r-q}{q+1}}\left(\frac{r-q}{r+1}\right)$ and $\mathfrak{K}_2:=\left(\frac{2^{q-1}q|\gamma|(q-1)}{\beta(r-1)}\right)^{\frac{q-1}{r-q}}\left(\frac{r-q}{r-1}\right)$, the solution is unique. 
\end{theorem}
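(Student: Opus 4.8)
The plan is to obtain existence by a Faedo--Galerkin scheme reinforced by the higher-order estimate that the torus identity \eqref{3} provides, and to obtain uniqueness by a difference estimate in which the hypothesis on $\min\{\mu,\alpha\}$ is exactly what is needed to dominate the convective nonlinearity. For existence I would fix $n$ and seek an approximation $\y_e^n=\sum_{k=1}^n c_k\w_k$ in the span $\H_n$ of the first $n$ eigenfunctions of $\mathscr{A}$, solving the orthogonal projection of \eqref{1ee} onto $\H_n$. Testing this projected system with $\y_e^n$ and using $\langle\mathcal{B}(\y_e^n),\y_e^n\rangle=0$, $\langle\mathcal{C}_1(\y_e^n),\y_e^n\rangle=\|\y_e^n\|_{\wi\L^{r+1}}^{r+1}$ and $\langle\mathcal{C}_2(\y_e^n),\y_e^n\rangle=\|\y_e^n\|_{\wi\L^{q+1}}^{q+1}\geq0$, the pumping contribution $|\gamma|\,\|\y_e^n\|_{\wi\L^{q+1}}^{q+1}$ is absorbed into $\beta\|\y_e^n\|_{\wi\L^{r+1}}^{r+1}$ by Young's inequality (admissible since $q<r$, and producing the constant $\mathfrak{K}_1$), yielding the coercivity bound
\begin{align*}
	\mu\|\nabla\y_e^n\|_{\H}^2+\alpha\|\y_e^n\|_{\H}^2+\frac{\beta}{2}\|\y_e^n\|_{\wi\L^{r+1}}^{r+1}\leq\frac{1}{\beta\mu}\|\f_e\|_{\H}^2+\frac{\mathfrak{K}_1}{\beta}|\mathbb{T}^d|,
\end{align*}
uniform in $n$. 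This furnishes the sign condition $\langle\mathcal{N}(\y_e^n)-\f_e,\y_e^n\rangle\geq0$ on a large sphere, so a corollary of Brouwer's fixed point theorem produces a solution $\y_e^n$ of the finite-dimensional system together with a uniform $\V\cap\wi\L^{r+1}$ bound.

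Next I would test the projected equation with $\A\y_e^n$. Exactly as in Step IV of Proposition \ref{prop3.1} and estimates \eqref{Ayc1}--\eqref{Ayc2}, and using the identity \eqref{3} for the absorption term, the convective and pumping terms can be controlled by $\tfrac{\mu}{2}\|\A\y_e^n\|_{\H}^2$ plus the good term $\beta\||\y_e^n|^{\frac{r-1}{2}}\nabla\y_e^n\|_{\H}^2$ and lower-order quantities already bounded above; this gives a bound for $\|\A\y_e^n\|_{\H}$ uniform in $n$, so $\{\y_e^n\}$ is bounded in $\D(\A)$. Since $\D(\A)\hookrightarrow\V$ is compact, along a subsequence $\y_e^n\to\y_e$ strongly in $\V$ and $\H$, $\y_e^n\rightharpoonup\y_e$ in $\D(\A)$, and, by Remark \ref{C2} and interpolation, strongly in $\wi\L^{r+1}$. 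These strong convergences identify the weak limits of $\mathcal{B}(\y_e^n)$, $\mathcal{C}_1(\y_e^n)$ and $\mathcal{C}_2(\y_e^n)$, and passing to the limit shows that $\y_e\in\D(\A)$ solves \eqref{1ee}, proving existence of a strong solution for every $\f_e\in\H$.

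For uniqueness, let $\y_e^1,\y_e^2$ be two solutions, set $\w=\y_e^1-\y_e^2$, and take the inner product of the difference of the equations with $\w$. The convective difference equals $b(\w,\y_e^2,\w)$, which I would estimate as in \eqref{3.4}, the $\gamma$-difference as in \eqref{c2.2} (producing $\mathfrak{K}_2$), while \eqref{C1} supplies the nonnegative lower bound $\tfrac{\beta}{2}\big(\||\y_e^1|^{\frac{r-1}{2}}\w\|_{\H}^2+\||\y_e^2|^{\frac{r-1}{2}}\w\|_{\H}^2\big)$ for the absorption difference. After absorbing the weighted $\L^2$ terms into this lower bound and into $\tfrac{\mu}{2}\|\nabla\w\|_{\H}^2$, one is left with
\begin{align*}
	\Big(\min\{\mu,\alpha\}-2\mathfrak{K}_2-C\,\mathcal{E}(\f_e)\Big)\|\w\|_{\V}^2\leq0,\qquad \mathcal{E}(\f_e):=\left(\frac{1}{\beta\mu}\|\f_e\|_{\H}^2+\frac{\mathfrak{K}_1}{\beta}|\mathbb{T}^d|\right)^{1/2},
\end{align*}
where $\mathcal{E}(\f_e)$ is the a priori bound on $\y_e^2$ furnished by the first step and controls the size of $\y_e^2$ in the norm entering $b(\w,\y_e^2,\w)$. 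The hypothesis forces the bracket to be positive, whence $\w=\boldsymbol{0}$.

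The main obstacle is the uniform $\D(\A)$ estimate in the second step: because $r>3$ (or $d=r=3$ with $2\beta\mu>1$) the absorption nonlinearity is critical or supercritical, and only the torus identity \eqref{3}, together with the careful splitting in \eqref{Ayc1}--\eqref{Ayc2}, lets the convective and pumping terms be dominated by $\tfrac{\mu}{2}\|\A\y_e^n\|_{\H}^2$ and $\beta\||\y_e^n|^{\frac{r-1}{2}}\nabla\y_e^n\|_{\H}^2$. This strong $\D(\A)$ bound is precisely what upgrades weak to strong convergence in $\wi\L^{r+1}$ and thereby legitimizes the limit passage in the fast-growing nonlinearity. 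For uniqueness the delicate point is the same convective term $b(\w,\y_e^2,\w)$, now weighted against the data-dependent magnitude $\mathcal{E}(\f_e)$ of $\y_e^2$, which is exactly why the explicit threshold involving $\mathfrak{K}_1$ and $\mathfrak{K}_2$ appears.
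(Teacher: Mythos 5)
Your overall strategy reaches both conclusions of the theorem, but your existence proof takes a genuinely different route from the paper's. The paper runs no Galerkin scheme at all: it observes that the operator $\hat{\mathcal{M}}=\mu\A+\mathcal{B}(\cdot)+\alpha\I+\beta\mathcal{C}_1(\cdot)+\gamma\mathcal{C}_2(\cdot)$ was already shown to be $m$-accretive on $\H$ with domain $\D(\A)$ (Proposition \ref{prop3.1} for $r>3$, Remark \ref{RK} for $d=r=3$ with $2\beta\mu>1$), invokes the range condition of the $m$-accretivity theory of \cite{VB1} to produce a strong solution $\y_e\in\D(\A)$ of \eqref{1ee} in one stroke, and only then derives the energy bound \eqref{eg1} needed for uniqueness. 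Your Faedo--Galerkin construction (Brouwer on each finite-dimensional space, the same energy estimate, the $\A\y_e^n$ test combined with the torus identity \eqref{3} for a uniform $\D(\A)$ bound, then compactness of $\D(\A)\hookrightarrow\V$ to pass to the limit in the fast-growing terms) is a legitimate, more self-contained alternative; what it costs is that you must redo at the Galerkin level exactly the Step IV estimates of Proposition \ref{prop3.1} that the paper simply reuses, and you inherit the same restrictions on $r$, so neither route is more general. One thing your route buys: coercivity plus Brouwer solves \eqref{1ee} directly, with no shift, whereas the abstract route has to reconcile the $\kappa$-shift appearing in the quasi-$m$-accretivity statement with the unshifted stationary equation.

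On uniqueness your argument is essentially the paper's, but there is an internal inconsistency you should repair. You say you would estimate the convective difference $b(\w,\y_e^2,\w)$ ``as in \eqref{3.4}''; that estimate splits it as $\tfrac{\mu}{2}\|\nabla\w\|_{\H}^2+\tfrac{\varepsilon\beta}{4}\||\y_e^2|^{\frac{r-1}{2}}\w\|_{\H}^2+\varrho_{\varepsilon}\|\w\|_{\H}^2$, where $\varrho_{\varepsilon}$ depends only on $\mu,\beta,r$ and not on the data. If you use it, no term $C\,\mathcal{E}(\f_e)$ can appear: you instead arrive at a smallness condition on the coefficients alone (this is precisely the alternative uniqueness criterion the paper records in the remark following the theorem), and the viscosity entering the bracket degrades to $\mu/2$. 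To obtain the threshold actually claimed, with $C\,\mathcal{E}(\f_e)$, the convective term must be estimated as in the paper's \eqref{engs4}, namely $|b(\w,\y_e^2,\w)|\leq\|\w\|_{\wi\L^4}^2\|\nabla\y_e^2\|_{\H}\leq C\|\w\|_{\V}^2\|\nabla\y_e^2\|_{\H}$ by Gagliardo--Nirenberg, keeping the full $\mu\|\nabla\w\|_{\H}^2$ intact, while the weighted terms produced by the $\gamma$-difference (as in \eqref{engs3}) cancel exactly against the monotonicity lower bound \eqref{C1}; the bound $\|\nabla\y_e^2\|_{\H}\leq\mathcal{E}(\f_e)$ from \eqref{eg1} then closes the argument. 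Your final display is the correct one; the estimate you cite on the way to it is not the one that produces it.
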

\begin{proof} 
\textbf{\emph{Existence}:} 
From Proposition \ref{prop3.1}, it is straightforward that the operator $\hat{\mathcal{M}}(\cdot):=\mu\A+\mathcal{B}(\cdot)+\alpha\I+\beta\mathcal{C}_1(\cdot)+\gamma\mathcal{C}_2(\cdot)$ is $m$-accretive in $\H$ for $r>3$ in $d=2,3$ and $d=r=3$ with $2\beta\mu>1$. Thus, from \cite{VB1}, we have
\begin{align*}
	\mathrm{R}(\hat{\mathcal{M}}(\cdot)+\I)=\H,
\end{align*}
which says that for every $\f_e\in\H$, there exists a $\y_{e}\in\D(\A)=\D(\hat{\mathcal{M}})$ (Step IV in the proof of Proposition \ref{prop3.1}) such that \eqref{1ee} is satisfied. Therefore, $\y_{e}\in\D(\A)$ is a  strong solution to \eqref{1ee}. We now derive some stationary energy estimates for the solution $\y_e$ of equation \eqref{1ee}. Taking the inner product with $\y_e$ in \eqref{1ee}, and then using H\"older's and Young's inequalities, we obtain
\begin{align}\label{eg1}
	\mu	\|\nabla\y_e\|_{\H}^2+\frac{\alpha}{2}\|\y_e\|_{\H}^2+\frac{\beta}{2}\|\y_e\|_{\wi{\L}^{r+1}}^{r+1}\leq\frac{1}{2\alpha}\|\f_e\|_{\H}^2+\mathfrak{K}_1|\mathbb{T}^d|,
\end{align}
which implies that
\begin{align}\label{eg}
	\min\left\{\mu, \frac{\alpha}{2}\right\}\|\y_e\|_{\H_{\mathrm{p}}^1}^2+\frac{\beta}{2}\|\y_e\|_{\wi{\L}^{r+1}}^{r+1}\leq\frac{1}{2\mu}\|\f_e\|_{\H}^2+\mathfrak{K}_1|\mathbb{T}^d|,
\end{align}
where the constant $\mathfrak{K}_1:=|\gamma|^{\frac{r+1}{r-q}}\left(\frac{2(q+1)}{\beta(r+1)}\right)^{\frac{r-q}{q+1}}\left(\frac{r-q}{r+1}\right)$. 
%By using H\"older's and Young's inequalities, we obtain
%\begin{align*}
%	|(\B(\y_e),\A\y_e)|&\leq\frac{\mu}{2}\|\A\y_e\|_{\H}^2+\frac{1}{2\mu}\||\y_e||\nabla\y_e|\|_{\H}^2\nonumber\\&\leq\frac{\mu}{2}\|\A\y_e\|_{\H}^2+\frac{\beta}{4}\||\y_e|^{\frac{r-1}{2}}\nabla\y_e\|_{\H}^2+\mathfrak{K}_2\|\nabla\y_e\|_{\H}^2,
%\end{align*}
%where $\mathfrak{K}_2:=\frac{1}{2\mu}\left(\frac{4}{\beta\mu(r-1)}\right)^{\frac{2}{r-3}}\left(\frac{r-3}{r-1}\right)$. Similarly, we calculate
%\begin{align*}
%	|\gamma(\mathcal{C}_2(\y_e),\mathrm{A}\y_e)|&\leq 
%	|\gamma|\||\y_e|^{\frac{q-1}{2}}\nabla\y_e\|_{\H}^{2} +4|\gamma|\left[\frac{q-1}{(q+1)^2}\right]\|\nabla|\y_e|^{\frac{q+1}{2}}\|_{\H}^{2}
%	\nonumber\\&\leq q|\gamma|\||\y_e|^{\frac{r-1}{2}} \nabla\y_e\|_{\H}^{\frac{2(q-1)}{r-1}}\|\nabla\y_e\|_{\H}^{\frac{2(r-q)}{r-1}}\nonumber\\&\leq 
%	\frac{\beta}{4}\||\y_e|^{\frac{r-1}{2}}\nabla\y_e\|_{\H}^2+\mathfrak{K}_3\|\nabla\y_e\|_{\H}^2,
%\end{align*}
%where $\mathfrak{K}_3:=(q|\gamma|)^{\frac{r-1}{r-q}}\left[\frac{4}{\beta}\left(\frac{q-1}{r-1}\right)\right]^ {\frac{q-1}{r-q}}\left({\frac{r-q}{r-1}}\right).$ Thus, by taking the inner product with $\A\y_e$ in \eqref{1ee}, we derive the following energy estimate (see the well-posedness step in Proposition \ref{prop3.3}):
%\begin{align}\label{engss}
%	\frac{\mu}{2}\|\A\y_e\|_{\H}^2+\alpha\|\nabla\y_e\|_{\H}^2+\frac{\beta}{2} \||\y_e|^{\frac{r-1}{2}}\nabla\y_e\|_{\H}^2\leq\frac{1}{2\mu}\|\f_e\|_{\H}^2+(\mathfrak{K}_2+\mathfrak{K}_3)\|\nabla\y_e\|_{\H}^2.
%\end{align}
\vskip 2mm
\noindent
\textbf{\emph{Uniqueness}:} 
Let us now establish the uniqueness of \eqref{1ee}. Let $\y_e, \z_e\in\D(\A)$ be two steady state solutions of \eqref{1ee} and define $\w_e:=\y_e-\z_e$. Then, from \eqref{1ee}, we write
\begin{align}\label{engs1}
	\mu\A\w_e+\mathcal{B}(\y_e)-\mathcal{B}(\z_e)+\alpha\w_e+\beta(\mathcal{C}_1(\y_e)-\mathcal{C}_1(\z_e))+\gamma(\mathcal{C}_2(\y_e)-\mathcal{C}_2(\z_e))=\boldsymbol{0}.
\end{align}
By taking the inner product with $\w_e$ in \eqref{engs1} and using \eqref{C1}, we obtain
\begin{align}\label{engs2}
	\mu\|\nabla\w_e\|_{\H}^2+\alpha\|\w_e\|_{\H}^2+\beta(\mathcal{C}_1(\y_e)-\mathcal{C}_1(\z_e),\w_e)\leq-(\mathcal{B}(\y_e)-\mathcal{B}(\z_e),\w_e)-
	\gamma(\mathcal{C}_2(\y_e)-\mathcal{C}_2(\z_e),\w_e).
\end{align}
From Proposition \ref{prop3.1} (see \eqref{3.5} and \eqref{c2.2}), we have  the following estimates:
\begin{align}\label{engs3}
	%	|(\mathcal{B}(\y_e)-\mathcal{B}(\z_e),\w_e)|&\leq
	%	\frac{\mu }{2}\|\nabla\w_e\|_{\H}^2 +\frac{\beta}{8}\||\z_e|^{\frac{r-1}{2}}\w_e\|_{\H}^2 +\mathfrak{K}_4\|\w_e\|_{\H}^2,\nonumber\\
	|\gamma(\mathcal{C}_2(\y_e)-\mathcal{C}_2(\z_e),\w_e)|&\leq
	\frac{\beta}{2}\||\y_e|^{\frac{r-1}{2}}\w_e\|_{\H}^2+\frac{\beta}{2}\||\z_e|^{\frac{r-1}{2}}\w_e\|_{\H}^2+2\mathfrak{K}_5\|\w_e\|_{\H}^2,\nonumber\\
	\beta(\mathcal{C}_1(\y_e)-\mathcal{C}_1(\z_e),\w_e)&\geq\frac{\beta}{2}
	\left(\||\y_e|^{\frac{r-1}{2}}\w_e\|_{\H}^2+\||\z_e|^{\frac{r-1}{2}}\w_e\|_{\H}^2\right),
\end{align}
% $\mathfrak{K}_4:=\frac{r-3}{2\mu(r-1)}\left[\frac{8}{\beta\mu (r-1)}\right]^{\frac{2}{r-3}}$ and 
where $\mathfrak{K}_2:=\left(\frac{2^{q-1}q|\gamma|(q-1)}{\beta(r-1)}\right)^{\frac{q-1}{r-q}}\left(\frac{r-q}{r-1}\right)$. 
% By using these estimates in \eqref{engs2}, we obtain
%\begin{align*}
%	\frac{\mu}{2}\|\nabla\w_e\|_{\H}^2+\frac{\beta}{2^{r}}\|\w_e\|_{\wi\L^{r+1}}^{r+1}+(\alpha-\mathfrak{K}_4-2\mathfrak{K}_5)\|\w_e\|_{\H}^2\leq0.
%\end{align*}
We calculate the bilinear term by using Gagliardo-Nirenberg interpolation inequality as
\begin{align}\label{engs4}
	|(\mathcal{B}(\y_e)-\mathcal{B}(\z_e),\w_e)|=|(\mathcal{B}(\w_e,\z_e),\w_e)|
	\leq\|\w_e\|_{\wi\L^4}^2\|\nabla\z_e\|_{\H}\leq C\|\w_e\|_{\V}^2\|\nabla\z_e\|_{\H}.
\end{align}
Using \eqref{engs3}-\eqref{engs4} in \eqref{engs2}, we find
\begin{align*}
	&\min\left\{\mu,\alpha\right\}\|\w_e\|_{\V}^2\leq2\mathfrak{K}_2\|\w_e\|_{\V}^2+C\|\w_e\|_{\V}^2\|\nabla\z_e\|_{\H},
\end{align*}
and it implies that
\begin{align}\label{engs5}
	\left(\min\left\{\mu,\alpha\right\}-2\mathfrak{K}_2-C\|\nabla\z_e\|_{\H}\right)\|\w_e\|_{\V}^2\leq0.
\end{align}
From \eqref{eg1}, we calculate
\begin{align*}
	\|\nabla\z_e\|_{\H}\leq\left(\frac{1}{2\alpha\mu}\|\f_e\|_{\H}^2+\frac{\mathfrak{K}_1}{\mu}|\mathbb{T}^d|\right)^{\frac{1}{2}}.
\end{align*}
Thus, from \eqref{engs5},  we conclude that $\w_e=0$ and hence $\y_e=\z_e$. This completes the proof of uniqueness.
\end{proof}
\begin{remark} (1) We can also find
	\begin{align*}
		|(\mathcal{B}(\y_e)-\mathcal{B}(\z_e),\w_e)|&\leq\mu\|\nabla\w_e\|_{\H}^2+\frac{1}{4\mu}\int_{\mathbb{T}^d}|\z_e(x)|^2|\y_e(x)-\z_e(x)|^2\d x \nonumber\\&= \mu\|\nabla\w_e\|_{\H}^2+\frac{1}{4\mu} \int_{\mathbb{T}^d}|\w_e(x)|^2\left(|\z_e(x)|^{r-1}+1\right)\frac{|\z_e(x)|^2}{|\z_e(x)|^{r-1}+1}\d x\nonumber\\&\leq\mu\|\nabla\w_e\|_{\H}^2+ \frac{1}{4\mu}\int_{\mathbb{T}^d}|\z_e(x)|^{r-1}|\w_e(x)|^2\d x
		+\frac{1}{4\mu}\int_{\mathbb{T}^d}|\w_e(x)|^2\d x,
	\end{align*}
	where we used the fact that $\left\|\frac{|\z_e|^2}{|\z_e|^{r-1}+1}\right\|_{\widetilde{\L}^{\infty}}<1$ for $r\geq 3$. 
	By using Taylor's formula, H\"older's and Young's inequality, we have
	\begin{align*}
		|\gamma(\mathcal{C}_2(\y_e)-\mathcal{C}_2(\z_e),\w_e)|&\leq2^{q-2}q|\gamma|\int_{\mathbb{T}^d} (|\y_e(x)|^{q-1}+|\z_e(x)|^{q-1})|\w_e(x)|^2 \d x\nonumber\\&\leq
		\frac{1}{2}\left(\frac{\beta}{2}-\frac{1}{4\mu}\right)\||\z_e|^{\frac{r-1}{2}}\w_e\|_{\H}^2+\mathfrak{K}_3\|\w_e\|_{\H}^2+\frac{1}{4\mu}\||\y_e|^{\frac{r-1}{2}}\w_e\|_{\H}^2+\mathfrak{K}_4\|\w_e\|_{\H}^2,
	\end{align*}
	where $\mathfrak{K}_3:=\left(2^{q-2}q|\gamma|\right)^{\frac{r-1}{r-q}}\left(\frac{r-q}{r-1}\right)\left(\frac{2(q-1)}{(r-1)\left(\frac{\beta}{2}-\frac{1}{4\mu}\right)}\right)^{\frac{q-1}{r-q}}$ and $\mathfrak{K}_4:=\left(2^{q-2}q|\gamma|\right)^{\frac{r-1}{r-q}}\left(\frac{r-q}{r-1}\right)\left(\frac{4\mu(q-1)}{r-1}\right)^{\frac{q-1}{r-q}}$. Thus, from \eqref{engs2}, we obtain
	\begin{align*}
		(\alpha-\mathfrak{K}_3-\mathfrak{K}_4)\|\w_e\|_{\H}^2+\frac{1}{2}\left(\frac{\beta}{2}-\frac{1}{4\mu}\right)\||\z_e|^{\frac{r-1}{2}}\w_e\|_{\H}^2+\left(\frac{\beta}{2}-\frac{1}{4\mu}\right)\||\y_e|^{\frac{r-1}{2}}\w_e\|_{\H}^2\leq0.
	\end{align*}
	Therefore, for $\alpha>\mathfrak{K}_3+\mathfrak{K}_4$ and $\beta>\frac{1}{2\mu}$, we obtain $\y_e=\z_e$.
	
	\vskip 2mm
	\noindent 
	(2) We further calculate
	\begin{align*}
		|(\mathcal{B}(\y_e)-\mathcal{B}(\z_e),\w_e)|\leq
		\frac{\mu}{2}\|\nabla\w_e\|_{\H}^2 +\frac{\beta}{4}\||\z_e|^{\frac{r-1}{2}}\w_e\|_{\H}^2 +\varrho_1\|\w_e\|_{\H}^2,
	\end{align*}
	where $\varrho_1:=\frac{r-3}{2\mu(r-1)}\left[\frac{4}{\beta\mu (r-1)}\right]^{\frac{2}{r-3}}.$ Furthermore, by using H\"older's and Young's inequalities, we estimate 
	\begin{align*}
		\int_{\mathbb{T}^d} |\y_e(x)|^{q-1}|\w_e(x)|^2 \d x&=\int_{\mathbb{T}^d} |\y_e(x)|^{q-1}|\w_e(x)|^{2\left(\frac{q-1}{r-1}\right)}|\w_e(x)|^{2\left(1-\frac{q-1}{r-1}\right)}\d x \nonumber\\&\leq 
		\frac{\beta}{2^{q-1}q|\gamma|}\||\y_e|^{\frac{r-1}{2}}\w_e\|_{\H}^2+
		\rho_2\|\w_e\|_{\H}^2, 
	\end{align*}
	where $\rho_2:=\left(\frac{2^{q-1}q|\gamma|(q-1)}{\beta(r-1)}\right)^{\frac{q-1}{r-q}}\left(\frac{r-q}{r-1}\right)$ . Similarly, one can obtain 
	\begin{align*}
		\int_{\mathbb{T}^d}|\z_e(x)|^{q-1}|\w_e(x)|^2 \d x\leq
		\frac{\beta}{2^{q}q|\gamma|}\||\z_e|^{\frac{r-1}{2}}\w_e\|_{\H}^2+\rho_1\|\w_e\|_{\H}^2,
	\end{align*}
	where $\rho_1:=\left(\frac{2^{q}q|\gamma|(q-1)}{\beta(r-1)}\right)^{\frac{q-1}{r-q}}\left(\frac{r-q}{r-1}\right)$. Using the above results, we find   
	\begin{align*}
		|\gamma\langle\mathcal{C}_2(\y_e)-\mathcal{C}_2(\z_e),\y_e-\z_e\rangle|&\leq2^{q-2}q|\gamma|\int_{\mathbb{T}^d} (|\y_e(x)|^{q-1}+|\z_e(x)|^{q-1})|\w_e(x)|^2 \d x
		\nonumber\\&\leq
		\frac{\beta}{2}\||\y_e|^{\frac{r-1}{2}}\w_e\|_{\H}^2+\frac{\beta}{4}\||\z_e|^{\frac{r-1}{2}}\w_e\|_{\H}^2+2^{q-2}q|\gamma|(\rho_1+\rho_2)\|\w_e\|_{\H}^2.	
	\end{align*}
	Thus from \eqref{engs2}, we have
	\begin{align*}
		&\frac{\mu}{2}\|\nabla\w_e(x)\|_{\H}^2+\alpha\|\w_e(x)\|_{\H}^2\leq2^{q-2}q|\gamma|(\rho_1+\rho_2)\|\w_e\|_{\H}^2+\varrho_1\|\w_e\|_{\H}^2.	
	\end{align*}
	It implies that
	\begin{align*}
		&\left(\min\left\{\frac{\mu}{2},\alpha\right\}-2^{q-2}q|\gamma|(\rho_1+\rho_2)-\varrho_1\right)\|\w_e\|_{\V}^2\leq0.
	\end{align*}
	Therefore  for $\min\left\{\frac{\mu}{2},\alpha\right\}>2^{q-2}q|\gamma|(\rho_1+\rho_2)+\varrho_1$, we get the uniqueness $\y_e=\z_e$.
	
	%(3) \textbf{The case $d=r=3$ with $2\beta\mu>1$.} We have from \eqref{C1} that 
	%\begin{align}\label{rk1}
	%	\beta(\mathcal{C}_1(\y_e)-\mathcal{C}_1(\z_e),\w_e)\geq \frac{\beta}{2}\|\y_e\w_e\|_{\H}^2+\frac{\beta}{2}\|\z_e\w_e\|_{\H}^2.		
	%\end{align}
	%Moreover, we calculate
	%\begin{align}\label{rk2}
	%	|(\mathcal{B}(\y_e)-\mathcal{B}(\z_e),\w_e)|\leq
	%	\mu\|\nabla\w_e\|_{\H}^2+\frac{1}{4\mu}\|\z_e\w_e\|_{\H}^2	.
	%\end{align} 
	%Modifying the calculation \eqref{c2.1.1} for $r=3$, we get  
	%\begin{align*}
	%	\int_{\mathbb{T}^d} |\y_e|^{q-1}|\w_e|^2 \d x\leq \frac{1}{2^{q}q|\gamma|\mu}\|\y_e\w_e\|_{\H}^2+
	%	\wi\rho_1\|\w_e\|_{\H}^2,
	%\end{align*}
	%where $\wi\rho_1:=\rho_{\frac{1}{\mu\beta}}=\left(2^{q-1}q|\gamma|\mu(q-1)\right)^{\frac{q-1}{3-q}}\left(\frac{3-q}{2}\right)$. Similarly from \eqref{c2.1.2} for $r=3$, we infer
	%\begin{align}\label{rk3}
	%	\int_{\mathbb{T}^d} |\z_e|^{q-1}|\w_e|^2 \d x\leq
	%	\frac{\left(\beta-\frac{1}{2\mu}\right)}{2^{q}q|\gamma|}\|\z_e\w_e\|_{\H}^2+\wi\rho_2\|\w_e\|_{\H}^2	,
	%\end{align}
	%where $\wi\rho_2:=\rho_{\frac{1}{\beta}\left(\beta-\frac{1}{2\mu}\right)}=\left(\frac{2^{q-1}q|\gamma|(q-1)}{\left(\beta-\frac{1}{2\mu}\right)}\right)^{\frac{q-1}{3-q}}\left(\frac{3-q}{2}\right)$. 
\end{remark}

\begin{remark}
	In all of the above cases,  we observe that the linear damping term $\alpha\y$ is helping us to obtain the existence and uniqueness result. For the non-zero average condition in the torus, without this term, the uniqueness of the stationary CBFeD model is an open problem.   Thus the linear damping term  plays a crucial role in order to obtain the solvability of the stationary system as well as the stability results. 
\end{remark}

	\section{Proportional Controllers}\label{PC}
	\numberwithin{equation}{section}
We are now going to discuss the stabilization via \emph{proportional controllers} where the control is localized in an open subset $\Omega$ of a torus having smooth boundary $\partial\Omega$. By a \emph{proportional control}, we mean a linear feedback control $\u$ which is proportional to the difference between the desired state (eqilibrium solution $\y_e$) and actual state (solution $\y$). Consider the following controlled CBFeD system:
\begin{equation}\label{prop1.1}
	\left\{
	\begin{aligned}
		\frac{\partial\y}{\partial t}-\mu\Delta\y+(\y\cdot\nabla)\y+\alpha\y+\beta|\y|^{r-1}\y +\gamma|\y|^{q-1}\y+\nabla p&=\f_e+m\u(t), \  &&\text{ in } \ \mathbb{T}^d\times(0,\infty), \\ \nabla\cdot\y&=0, \  &&\text{ in } \ \mathbb{T}^d\times(0,\infty), \\
		\y(0)&=\y_0, \ && \text{ in } \ \mathbb{T}^d,
	\end{aligned}
	\right.
\end{equation}	
where $\f_e\in\L^2(\mathbb{T}^d)$, $\y_0\in\H$ and $m$ is the characteristic function on $\Omega.$ Moreover, here $\u=-k(\y-\y_e), \ k\in\N$ is a proportional control having support in $\Omega$. Let $(\y_e,p_e)\in\H_{\mathrm{p}}^2(\mathbb{T}^d)\times (\mathrm{H}^1_{\mathrm{p}}(\mathbb{T}^d)\cap\mathrm{L}^2_0(\mathbb{T}^d))$ be the the steady state solution of the system \eqref{prop1.1} satisfying 
\begin{equation}\label{prop1.2}
	-\mu \Delta\y_e+(\y\cdot\nabla)\y_e+\alpha\y_e+\beta|\y_e|^{r-1}\y_e +\gamma|\y_e|^{q-1}\y_e+\nabla p_e=\f_e, \ \text{ in } \ \mathbb{T}^d. 
\end{equation}
Let $\wi{\mathrm{Q}}:=\mathbb{T}^d\setminus\ \overline{\Omega}$ and define  $\wi{\mathcal{V}}:=\{\y\in\C_{\mathrm{p}}^{\infty}(\wi\Q;\R^d):\nabla\cdot\y=0\}.$ We also define the Hilbert spaces $\wi\H$ and $\wi\V$ as the closures of $\wi{\mathcal{V}}$ in the Lebesgue space $\mathrm{L}^2(\wi\Q;\R^d)$ and the Sobolev space $\mathrm{H}^1(\wi\Q;\R^d)$, respectively. We identify the dual of $\wi\H$ with itself and we define the inner product $(\cdot,\cdot)_{\wi{\mathrm{Q}}}$ in $\wi\H$ as below
\begin{align*}
	(\y,\z)_{\wi{\mathrm{Q}}}:=\int_{\wi{\mathrm{Q}}} \y(x)\z(x)\d x.
\end{align*}  
Let us now define the operator $\mathcal{A}:=-\mu\mathcal{P}\Delta\y+\alpha\y$ on $\wi{\mathrm{Q}}$ with the domain $\D(\mathcal{A}):=\mathrm{H}^2(\wi{\mathrm{Q}};\R^d)\cap\wi\V.$ Let us denote the first eigenvalue of an operator $\mathcal{A}$ by $\lambda_1^*(\wi{\mathrm{Q}})$. Then using the concept of Rayleigh quotient, we know that 
\begin{align}\label{prop1.3}
	\lambda_1^*(\wi{\mathrm{Q}}):=\inf\{\mu\|\nabla\wi\y\|_{\wi\H}^2+\alpha\|\wi\y\|_{\wi\H}^2: \ \wi\y\in\wi\V, \ \|\wi\y\|_{\wi\H}=1\}.
\end{align}
It is clear that $\lambda_1^*(\wi{\mathrm{Q}})>0$. We now give an auxiliary  result which will be used in the sequel.
\begin{proposition}\label{stbl3}
	Let us define the operator  
	\begin{align*}
		\A_k\y:=\mu\A\y+\alpha\y+k\mathcal{P}(m\y), \ \text{ for all }  \ \y\in\V,
	\end{align*}
	with $\D(\A_k)=\D(\A)$, where $k$ is any positive real number. Then for each $\varepsilon>0,$ there exists $k_1=k_1(\varepsilon)$ such that 
	\begin{align*}
		(\mu\A\y+\alpha\y+k\mathcal{P}(m\y),\y)\geq(\mu\lambda_1^*(\wi{\mathrm{Q}})-\varepsilon)\|\y\|_{\H}^2,
	\end{align*}
	for all $\y\in\V$ and $k\geq k_1$, where $\lambda_1^*(\wi{\mathrm{Q}})$ is given by \eqref{prop1.3}.
\end{proposition}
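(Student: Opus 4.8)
The plan is to reduce the operator inequality to the variational characterization \eqref{prop1.3} of $\lambda_1^*(\wi{\mathrm{Q}})$ by means of a compactness/contradiction argument, after first writing the quadratic form $(\A_k\y,\y)$ in a transparent way.

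First I would make the form explicit. Since $\mathcal{P}$ is the orthogonal (hence self-adjoint) projection of $\L^2(\mathbb{T}^d)$ onto $\H$ with $\mathcal{P}\y=\y$ for every $\y\in\V$, and since $(\A\y,\y)=\|\nabla\y\|_{\H}^2$ by integration by parts on the torus, I obtain
\[
(\A_k\y,\y)=\mu\|\nabla\y\|_{\H}^2+\alpha\|\y\|_{\H}^2+k(\mathcal{P}(m\y),\y)=\mu\|\nabla\y\|_{\H}^2+\alpha\|\y\|_{\H}^2+k\int_{\Omega}|\y(x)|^2\d x,
\]
where I used $(\mathcal{P}(m\y),\y)=(m\y,\mathcal{P}\y)=(m\y,\y)=\|\y\|_{\L^2(\Omega)}^2$. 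All three terms are nonnegative, and the last one penalizes the mass of $\y$ inside $\Omega$; the guiding idea is that as $k\to\infty$ any near-minimizer must concentrate on $\wi{\mathrm{Q}}=\mathbb{T}^d\setminus\overline{\Omega}$, where \eqref{prop1.3} forces $\mu\|\nabla\y\|_{\H}^2$ to be at least $\mu\lambda_1^*(\wi{\mathrm{Q}})\|\y\|_{\H}^2$.

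Next I would argue by contradiction, using homogeneity to normalize. If the assertion failed for some $\varepsilon_0>0$, then for every $n\in\N$ there would exist $k_n\geq n$ and $\y_n\in\V$ with $\|\y_n\|_{\H}=1$ such that
\[
\mu\|\nabla\y_n\|_{\H}^2+\alpha+k_n\|\y_n\|_{\L^2(\Omega)}^2<\mu\lambda_1^*(\wi{\mathrm{Q}})-\varepsilon_0.
\]
From this single inequality I read off that $\{\y_n\}$ is bounded in $\V$ and that $\|\y_n\|_{\L^2(\Omega)}^2\leq\mu\lambda_1^*(\wi{\mathrm{Q}})/k_n\to0$. Invoking the Banach--Alaoglu theorem together with the compactness of the embedding $\V\hookrightarrow\H$, I pass to a subsequence along which $\y_n\rightharpoonup\y$ in $\V$ and $\y_n\to\y$ strongly in $\H$. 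Consequently $\|\y\|_{\H}=1$ and $\|\y\|_{\L^2(\Omega)}=0$, so $\y=\boldsymbol{0}$ a.e. on $\Omega$ and $\|\y\|_{\wi\H}=1$.

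The decisive step, which I expect to be the main obstacle, is to certify that the restriction $\y|_{\wi{\mathrm{Q}}}$ is an admissible competitor in \eqref{prop1.3}, i.e. $\y|_{\wi{\mathrm{Q}}}\in\wi\V$. Since $\y\in\V$ is divergence free and vanishes a.e. on $\Omega$, its trace on $\partial\wi{\mathrm{Q}}=\partial\Omega$ is zero and its divergence remains zero on $\wi{\mathrm{Q}}$, and a standard trace and density argument then places $\y|_{\wi{\mathrm{Q}}}$ in $\wi\V$. Granting this, the fact that $\nabla\y=\boldsymbol{0}$ a.e. on $\Omega$ gives $\|\nabla\y\|_{\wi\H}=\|\nabla\y\|_{\H}$, weak lower semicontinuity of $\y\mapsto\|\nabla\y\|_{\H}^2$ gives $\|\nabla\y\|_{\H}^2\leq\liminf_{n\to\infty}\|\nabla\y_n\|_{\H}^2$, and \eqref{prop1.3} applied to the unit vector $\y|_{\wi{\mathrm{Q}}}$ gives $\mu\|\nabla\y\|_{\wi\H}^2\geq\mu\lambda_1^*(\wi{\mathrm{Q}})$. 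Chaining these with the displayed inequality yields
\[
\mu\lambda_1^*(\wi{\mathrm{Q}})\leq\mu\|\nabla\y\|_{\wi\H}^2\leq\liminf_{n\to\infty}\mu\|\nabla\y_n\|_{\H}^2\leq\mu\lambda_1^*(\wi{\mathrm{Q}})-\varepsilon_0-\alpha,
\]
which is impossible since $\alpha,\varepsilon_0>0$. This contradiction establishes the bound for all $k\geq k_1$ with $k_1$ chosen large (the form being monotone nondecreasing in $k$ for each fixed $\y$ reinforces the uniform threshold), completing the proof.
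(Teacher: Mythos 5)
Your proof takes a genuinely different route from the paper's, and the strategy is viable, but the final chain of inequalities contains one incorrect step. For comparison: the paper works with the first eigenvalue $\nu_k^1$ of $\A_k$ and its normalized eigenfunctions $\y_k^1$; from \eqref{prop1.5} and \eqref{prop1.7} it derives exactly the bounds you obtain for your near-minimizers, extracts the same weak/strong limits, but then passes to the limit in the eigenvalue equation \eqref{prop1.6} tested against $\psi\in\wi\V$ (where $m\equiv0$), identifies the limit $\nu^*$ of $\nu_k^1$ as an eigenvalue of $\mathcal{A}$, and concludes $\nu_k^1\to\lambda_1^*(\wi{\mathrm{Q}})$, after which the claim follows from the Rayleigh quotient \eqref{prop1.4}. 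Your contradiction argument dispenses with the eigenfunctions of $\A_k$ and with the limit equation altogether, replacing them by weak lower semicontinuity of $\y\mapsto\|\nabla\y\|_{\H}^2$ and the variational characterization \eqref{prop1.3}; this is more elementary. On the delicate point — that the limit function restricted to $\wi{\mathrm{Q}}$ belongs to $\wi\V$ — you are at essentially the same level of rigor as the paper: its assertion of weak convergence ``in $\wi\V$'' glosses over the same zero-trace/density issue, which in both arguments is resolved because the limit vanishes a.e.\ on $\Omega$ and $\partial\Omega$ is smooth.

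The error is in your application of \eqref{prop1.3}. By definition, $\lambda_1^*(\wi{\mathrm{Q}})$ is the infimum of the \emph{full} form $\mu\|\nabla\wi\y\|_{\wi\H}^2+\alpha\|\wi\y\|_{\wi\H}^2$ over unit vectors of $\wi\V$, so the competitor $\y|_{\wi{\mathrm{Q}}}$, which satisfies $\|\y\|_{\wi\H}=1$, yields
\begin{align*}
	\mu\|\nabla\y\|_{\wi\H}^2+\alpha\geq\lambda_1^*(\wi{\mathrm{Q}}),
\end{align*}
and \emph{not} $\mu\|\nabla\y\|_{\wi\H}^2\geq\mu\lambda_1^*(\wi{\mathrm{Q}})$, which is what you use. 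With the correct inequality your chain becomes
\begin{align*}
	\lambda_1^*(\wi{\mathrm{Q}})-\alpha\leq\mu\|\nabla\y\|_{\wi\H}^2\leq\liminf_{n\to\infty}\mu\|\nabla\y_n\|_{\H}^2\leq\mu\lambda_1^*(\wi{\mathrm{Q}})-\varepsilon_0-\alpha,
\end{align*}
that is, $\lambda_1^*(\wi{\mathrm{Q}})\leq\mu\lambda_1^*(\wi{\mathrm{Q}})-\varepsilon_0$, which is a contradiction only when $\mu\leq1$ or $\varepsilon_0>(\mu-1)\lambda_1^*(\wi{\mathrm{Q}})$; so, as written, the argument does not prove the inequality with the constant $\mu\lambda_1^*(\wi{\mathrm{Q}})-\varepsilon$. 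What your scheme does prove, once \eqref{prop1.3} is applied correctly, is the bound with constant $\lambda_1^*(\wi{\mathrm{Q}})-\varepsilon$: starting the contradiction from $\mu\|\nabla\y_n\|_{\H}^2+\alpha+k_n\int_{\Omega}|\y_n(x)|^2\d x<\lambda_1^*(\wi{\mathrm{Q}})-\varepsilon_0$, the same chain gives $0\leq-\varepsilon_0$. This is in fact exactly the constant the paper's own proof establishes (its final display reads $(\A_k\y,\y)\geq\nu_k^1\|\y\|_{\H}^2\geq(\lambda_1^*(\wi{\mathrm{Q}})-\varepsilon)\|\y\|_{\H}^2$); the extra factor $\mu$ in the statement of the proposition is inconsistent with the definition \eqref{prop1.3} and appears to be a typo. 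In short: replace your first inequality by the correct application of \eqref{prop1.3} and target the constant $\lambda_1^*(\wi{\mathrm{Q}})-\varepsilon$; with that repair your argument is complete and independent of the paper's.
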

\begin{remark}
	Let $\nu_k^1$ be the first eigenvalue of $\A_k$. Then, we have 
		\begin{align}\label{prop1.4}
			\nu_k^1:=\inf\left\{\mu\|\nabla\y\|_{\H}^2+\alpha\|\y\|_{\H}^2+k\int_{\Omega} |\y(x)|^2 \d x; \ \y\in\V, \ \|\y\|_{\H}=1\right\}.
		\end{align}
		 Let us define the following subsets of $\mathbb{R}$: \begin{align*}
		 	\mathrm{G_1}&=\{\mu\|\nabla\wi\y\|_{\wi\H}^2+\alpha\|\wi\y\|_{\wi\H}^2:\wi\y\in\wi\V, \ \|\wi\y\|_{\wi\H}=1\}\ \text{ and }\\
		 	\mathrm{G_2}&=\left\{\mu\|\nabla\y\|_{\H}^2+\alpha\|\y\|_{\H}^2+k\int_{\Omega} |\y(x)|^2 \d x : \ \y\in\V, \ \|\y\|_{\H}=1\right\}.
		 \end{align*}  Let us consider $\wi\y\in\wi\V$ and extend by zero across the boundary $\partial\Omega$ to $\V$. That is, we define 
	 	\begin{align*}
	 	\y=
	 	\begin{cases}
	 		\wi\y \  &\text{ in } \ \wi{\mathrm{Q}},\\
	 		\boldsymbol{0} &\text{ in }\  \Omega.
	 	\end{cases}
	 \end{align*} 
 Therefore, it is immediate that $\mathrm{G_1}\subset\mathrm{G_2},$ since  $\|\y\|_{\H}=\|\wi\y\|_{\wi\H}=1$ and 
		\begin{align*}
	\mu\|\nabla\y\|_{\H}^2+\alpha\|\y\|_{\H}^2+k\int_{\Omega}|\y(x)|^2\d x&=	\mu\|\nabla\wi\y\|_{\wi\H}^2+\alpha\|\wi\y\|_{\wi\H}^2.
		\end{align*}
		Hence, we conclude 
		\begin{align}\label{prop1.5}
			\nu_k^1\leq\lambda_1^*(\wi{\mathrm{Q}}).
	\end{align}
\end{remark}
\vskip 2mm
\noindent

\begin{proof}[Proof of Proposition \ref{stbl3}]
	Let $\y_k^1\in\D(\A)$ be an  eigenfunction of the operator $\A_k$ corresponding to the eigenvalue $\nu_k^1.$  Then we write
	\begin{align}\label{prop1.6}
		\mu\A\y_k^1+\alpha\y_k^1+k\mathcal{P}(m\y_k^1)=\nu_k^1\y_k^1.
	\end{align}
	We may assume that the $\y_k^1$ is the normalized eigenfunction, that is, $\|\y_k^1\|_{\H}=1.$ Then using \eqref{prop1.5}, we get
	\begin{align}\label{prop1.7}
		\mu\|\nabla\y_k^1\|_{\H}^2+\alpha\|\y_k^1\|_{\H}^2+k\int_{\Omega}|\y_k^1(x)|^2 \d x= \nu_k^1\leq\lambda_1^*(\wi{\mathrm{Q}}).
	\end{align}
	By using the Banach-Alaoglu theorem, we deduce  from above that 
		\begin{align}\label{conv}
			\begin{cases}
				\y_k^1&\xrightharpoonup{w}\y^1 \   \text{ in } \ \V,\\
				\y_k^1&\to\y^1 \  \text{ in } \ \H,
			\end{cases}
	\end{align}
	where the final convergence in \eqref{conv} is along a subsequence  (denoted by the same notation) and due to the compact embedding $\V\hookrightarrow\hookrightarrow\H$. Moreover, we have  a convergent subsequence of positive reals $\nu_k^1\to\nu^*$. Thus from \eqref{prop1.7}, we have 
		\begin{align*}
			\int_{\Omega}|\y_k^1(x)|^2 \d x\to0  \ \text{ as } \ k\to\infty.
	\end{align*}
	By using the fact that $\|\cdot\|_{\wi\V}\leq\|\cdot\|_{\V}$, we get a subsequence $\{\y_{k_l}^1\}_{l\in\N}\subset\{\y_k^1\}_{k\in\N}$ such that 
	\begin{align}\label{conv1}
		\y_{k_l}^1\xrightharpoonup{w}\y^1_* \   \text{ in } \ \wi\V,
	\end{align}
	as $l\to\infty$. Since $\{\y_{k_l}^1\}_{l\in\N}$ is a subsequence of $\{\y_k^1\}_{k\in\N}$, therefore from \eqref{conv}, we have
		\begin{align}\label{conv2}
		\y_{k_l}^1\xrightharpoonup{w}\y^1 \   \text{ in } \ \V,
	   \end{align}
The convergences \eqref{conv1},\eqref{conv2} and the fact $\|\cdot\|_{\wi\V}\leq\|\cdot\|_{\V}$ yield $\y^1_*=\y^1$ in $\wi\V$. From the strong convergence in $\H$, we conclude that $\|\y^1\|_{\H}=1$. Now, we multiply \eqref{prop1.6} by $\psi\in\wi\V$ and integrate over $\wi{\mathrm{Q}}$ to get 
	\begin{align*}
		\mu\int_{\wi{\mathrm{Q}}} \nabla\y_k^1(x):\nabla\psi(x)\d x +\alpha\int_{\wi{\mathrm{Q}}} \y_k^1(x)\psi(x)\d x= \nu_k^1\int_{\wi{\mathrm{Q}}}\y^1_k(x)\psi(x)\d x,
	\end{align*}
	where we have used the fact that $m(x)=0$ for $x\in\wi{\mathrm{Q}}$. Using the convergences \eqref{conv}-\eqref{conv1}, we get $\mathcal{A}\y^1=\nu^*\y^1$ in $\wi\V'$, so that $\nu^*$ is an eigenvalue of $\mathcal{A}$. But from \eqref{prop1.5}, we have  $\nu^*\leq\lambda_1^*(\wi{\mathrm{Q}}),$ and $\lambda_1^*(\wi{\mathrm{Q}})$ is the first eigen value of $\mathcal{A}$. Therefore, it is immediate that $\nu^*=\lambda_1^*(\wi{\mathrm{Q}}),$  and 
	\begin{align*}
		\lim\limits_{k\to\infty}\nu_k^1=\lambda_1^*(\wi{\mathrm{Q}}). 
	\end{align*}
	This also says that for each $\varepsilon>0$, there exists a $k_1=k_1(\varepsilon)>0$ such that 
	\begin{align*}
		|\nu_k^1-\lambda_1^*(\wi{\mathrm{Q}})|<\varepsilon \  \text{ for } \ k\geq k_1.
	\end{align*}
	Thus by \eqref{prop1.4} and using the definition of $\A_k$, we obtain 
	\begin{align*}
		(\mu\A\y+\alpha\y+k\mathcal{P}(m\y),\y)\geq\nu^1_k\|\y\|_{\H}^2\geq(\lambda_1^*(\wi{\mathrm{Q}})-\varepsilon)\|\y\|_{\H}^2,
	\end{align*}
	which  completes the proof. 
\end{proof}

\begin{remark}\label{rk5}
	\textbf{1.} It is known from the classical Rayleigh-Faber-Krahn perimetric inequality (see \cite[Section 5.4]{VB7}) in dimension $d\geq2$ that
	\begin{align}\label{RFK}
		\lambda_1^*(\wi{\mathrm{Q}})\geq\left(\frac{\omega_d}{|\wi{\mathrm{Q}}|}\right)^{\frac{2}{d}}J_{\frac{d}{2}-1,1},
	\end{align}
	where $|\wi{\mathrm{Q}}|$ is the volume of $\wi{\mathrm{Q}}$, $\omega_d$ is the volume of the unit ball in $\R^d$ and $J_{m,1}$ is the first positive zero of the Bessel function $I_m(r)$ (see \cite{AoM}). \\
	
	\textbf{2.} If  $|\wi{\mathrm{Q}}|$ is sufficiently small (or $\wi{\mathrm{Q}}$ is sufficiently ``thin"), then  $\lambda_1^*(\wi{\mathrm{Q}})$ becomes larger and larger. 
\end{remark}

\begin{theorem}\label{propfd}
Let $\f_e\in\H$ and $\y_e\in\D(\A)$ be the solution of the equation \eqref{prop1.2}. Also, let us set $\z(\cdot)=\y(\cdot)-\y_e.$ Then, for $\z(0)\in\H$, the following problem  $\text{for a.e.} \ t>0$ 
	\begin{equation}\label{appl6.1}
		\left\{
		\begin{aligned}
			\frac{\d \z(t)}{\d t}&+\mu\A\z(t)+\wi{\mathcal{B}}(\z(t))+\alpha\z(t)+ \beta\wi{\mathcal{C}}_1(\z(t))+\gamma\wi{\mathcal{C}}_2(\z(t))+k\mathcal{P}(m\z(t))=\boldsymbol{0},\ \\ 
			\z(0)&=\y_0-\y_e,
		\end{aligned}
		\right.
	\end{equation}	
	has unique weak solution for $r>3$ and $r=3$ with $2\beta\mu>1$, satisfying
	\begin{align*}
		\z\in\mathrm{C}([0,T];\H)\cap\mathrm{L}^2(0,T;\V)\cap\mathrm{L}^{r+1}(0,T;\wi\L^{r+1}) \ \text{ with }\ \frac{\d\z}{\d t}\in\mathrm{L}^{\frac{r+1}{r}}(0,T;\H).
	\end{align*}
	 Furthermore, there exists a $\delta>0$ such that the following global exponential stability condition holds:
	\begin{align*}
		\|\z(t)\|_{\H}\leq e^{-\delta t}\|\z(0)\|_{\H}, \text{ for all } t\in[0,T].
	\end{align*}
\end{theorem}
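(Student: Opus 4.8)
The plan is to separate the two claims: well-posedness will follow from the general solvability theorem, while the exponential decay is an energy estimate powered by the coercivity furnished by Proposition~\ref{stbl3}.

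For existence and uniqueness I would observe that \eqref{appl6.1} is exactly the abstract system \eqref{1p44} with no subdifferential constraint, zero forcing, and $\mathfrak{F}\z:=k\mathcal{P}(m\z)$. Accordingly I take $\mathcal{K}=\H$, which trivially satisfies Hypothesis~\ref{AssupK} (since $\boldsymbol{0}\in\H$ and $(\I+\lambda\A)^{-1}\H\subset\H$) and for which $\partial\I_{\mathcal{K}}(\cdot)=\{\boldsymbol{0}\}$; then $\z(0)=\y_0-\y_e\in\H=\mathcal{K}$ is admissible. Because $\mathcal{P}$ is an orthogonal projection and $m$ a characteristic function, $\|\mathfrak{F}\z\|_{\H}\le k\|\z\|_{\H}$, so $\mathfrak{F}$ is continuous linear on $\H$ and \eqref{F_Oper} holds with $M=k$. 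Theorem~\ref{mainT} then provides, for $r\ge 3$ in $d=2,3$ and for $d=r=3$ with $2\beta\mu>1$, a unique weak solution $\z$ with the regularity of Definition~\ref{wdefn}, which is precisely the regularity asserted in the statement.

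For the stability I would take the inner product of \eqref{appl6.1} with $\z(\cdot)$ in $\H$. The control term contributes $k(\mathcal{P}(m\z),\z)=k(m\z,\z)=k\int_{\Omega}|\z|^2\d x$, using $\mathcal{P}\z=\z$. Repeating the estimates of Proposition~\ref{prop3.1}, namely \eqref{3.4} for $\wi{\mathcal{B}}(\z)$, the monotonicity bound \eqref{3.5} for $\beta\wi{\mathcal{C}}_1(\z)$, and \eqref{c2.2} for $\gamma\wi{\mathcal{C}}_2(\z)$, the convective and pumping terms are absorbed into $\frac{\mu}{2}\|\nabla\z\|_{\H}^2$, the nonnegative dissipation produced by $\beta\wi{\mathcal{C}}_1$, and a term $\mathfrak{C}\|\z\|_{\H}^2$ with $\mathfrak{C}:=\varrho_{\eps}+\rho_{\wi{\eps}}+\rho_{\eps}$ a fixed constant independent of $\z$. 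Discarding the nonnegative $\beta\wi{\mathcal{C}}_1$-contribution yields, for a.e. $t>0$,
\begin{align*}
	\frac12\frac{\d}{\d t}\|\z(t)\|_{\H}^2+\frac{\mu}{2}\|\nabla\z(t)\|_{\H}^2+\alpha\|\z(t)\|_{\H}^2+k\int_{\Omega}|\z(t)|^2\d x\le \mathfrak{C}\|\z(t)\|_{\H}^2.
\end{align*}

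The decisive step is to extract a strictly positive coercivity constant from the left-hand side. Applying Proposition~\ref{stbl3} with $\mu$ replaced by $\frac{\mu}{2}$ (its proof uses only the positivity of the leading viscosity coefficient; by tuning the Young constants in \eqref{3.4} one may in fact retain a coefficient as close to the full $\mu$ as desired, at the cost of enlarging $\mathfrak{C}$), for every $\eps>0$ there is $k_1(\eps)$ such that for $k\ge k_1(\eps)$,
\begin{align*}
	\frac{\mu}{2}\|\nabla\z\|_{\H}^2+\alpha\|\z\|_{\H}^2+k\int_{\Omega}|\z|^2\d x\ge\big(\Lambda-\eps\big)\|\z\|_{\H}^2,
\end{align*}
where $\Lambda>0$ is the first eigenvalue of $\frac{\mu}{2}\A+\alpha\I$ on $\wi{\mathrm{Q}}=\mathbb{T}^d\setminus\overline{\Omega}$, the analogue of $\lambda_1^*(\wi{\mathrm{Q}})$. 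Setting $\delta:=\Lambda-\eps-\mathfrak{C}$ gives $\frac{\d}{\d t}\|\z(t)\|_{\H}^2\le-2\delta\|\z(t)\|_{\H}^2$, so Gronwall's inequality together with the continuity $\z\in\C([0,T];\H)$ (which permits passage to $t=0$) yields $\|\z(t)\|_{\H}\le e^{-\delta t}\|\z(0)\|_{\H}$. The main obstacle is guaranteeing $\delta>0$: since $\boldsymbol{0}$ is an eigenvalue of the Stokes operator $\A$ on $\mathbb{T}^d$, no Poincar\'e-type coercivity is available for $\mu\A$, and the fixed constant $\mathfrak{C}$ coming from the convective and pumping nonlinearities need not be dominated by $\alpha$ alone. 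This is exactly where the localized proportional control enters: by Remark~\ref{rk5}, taking $\wi{\mathrm{Q}}$ sufficiently thin makes $\Lambda$ arbitrarily large, so that $\delta>0$ and, in fact, any prescribed decay rate is attainable by shrinking $\wi{\mathrm{Q}}$ and enlarging $k$.
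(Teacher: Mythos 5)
Your proposal is correct and follows essentially the same route as the paper: well-posedness from the abstract solvability theory (the paper re-runs the $m$-accretivity argument and the steps of Theorem~\ref{mainT}, while you invoke Theorem~\ref{mainT} directly with $\mathcal{K}=\H$ and $\mathfrak{F}=k\mathcal{P}(m\,\cdot)$, which is the cleaner phrasing of the same idea), followed by the energy inequality, Proposition~\ref{stbl3}, the thinness of $\wi{\mathrm{Q}}$ from Remark~\ref{rk5} to force $\delta>0$, and Gronwall. You are in fact slightly more careful than the paper at one point: the paper applies Proposition~\ref{stbl3} to $\big(\tfrac{\mu}{2}\A\z+\alpha\z+k\mathcal{P}(m\z),\z\big)$ although the proposition is stated with the full coefficient $\mu$, whereas you explicitly note that the relevant eigenvalue is that of $\tfrac{\mu}{2}\A+\alpha\I$ on $\wi{\mathrm{Q}}$ (or that the Young constants in the convective estimate can be retuned), which repairs this mismatch.
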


\begin{proof}
	Similar to the proof Proposition \ref{prop3.3}, one can show that the operator $\mu\A+\wi{\mathcal{B}}(\cdot)+ \beta\wi{\mathcal{C}}_1(\cdot)+\alpha\wi{\mathcal{C}}_2(\cdot)+k\mathcal{P}(m\cdot)$  is $m$-accretive in $\H\times\H.$ Moreover, using the similar steps as we have done in the proof of Theorem \ref{mainT}, one can prove that the system \eqref{appl6.1} has a unique weak solution
	\begin{align*}
		\z\in\C([0,T];\H)\cap\mathrm{L}^2(0,T;\V)\cap\mathrm{L}^{r+1}(0,T;\wi\L^{r+1}) \ \text{ with }\ \frac{\d\z}{\d t}\in\mathrm{L}^{\frac{r+1}{r}}(0,T;\H),
	\end{align*} 
	for $r>3$ in $d=2,3$ and $d=r=3$ with $2\beta\mu>1$. We will prove only the stability part. Let us take the inner product with $\z(\cdot)$ in \eqref{appl6.1} to get 
	\begin{align*}
		&\frac{1}{2}\frac{\d}{\d t}\|\z(t)\|_{\H}^2 +\mu\|\nabla\z(t)\|_{\H}^2+\alpha\|\z(t)\|_{\H}^2+\beta(\wi{\mathcal{C}}_1(\z(t)),\z(t)) +k(\mathcal{P}(m\z(t)),\z(t))\nonumber\\&=-(\wi{\mathcal{B}}(\z(t)),\z(t))-\gamma(\wi{\mathcal{C}}_2(\z(t)),\z(t)),
	\end{align*}
	for a.e. $t\in[0,T].$ Using the similar calculations as we have performed  in Proposition \ref{prop3.1}, we obtain 
	\begin{align}\label{3.4*}
		|\langle\mathcal{B}(\z+\y_e)-\mathcal{B}(\y_e),\z\rangle|\leq
		\frac{\mu }{2}\|\nabla\z(t)\|_{\H}^2+\frac{\beta}{4}\||\y_e|^{\frac{r-1}{2}}\z\|_{\H}^2 +\varrho^*\|\z\|_{\H}^2,
	\end{align}
	where $\varrho^*=\frac{r-3}{2\mu(r-1)}\left[\frac{4}{\beta\mu(r-1)}\right]^{\frac{2}{r-3}}$. Furthermore, we have 
	\begin{align}\label{c2.2*}
		|\gamma\langle\mathcal{C}_2(\z+\y_e)-\mathcal{C}_2(\y_e),\z\rangle|\leq
		\frac{\beta}{2}\||\z+\y_e|^{\frac{r-1}{2}}\z\|_{\H}^2+\frac{\beta}{4}\||\y_e|^{\frac{r-1}{2}}\z\|_{\H}^2+(\varrho_1^*+\varrho_2^*)\|\z\|_{\H}^2,
	\end{align}
	where $\varrho_1^*=\left(\frac{2^{q}q|\gamma|(q-1)}{2\beta(r-1)}\right)^{\frac{q-1}{r-q}}\left(\frac{r-q}{r-1}\right)$ and  $\varrho_2^*=\left(\frac{2^{q}q|\gamma|(q-1)}{\beta(r-1)}\right)^{\frac{q-1}{r-q}}\left(\frac{r-q}{r-1}\right)$. From \eqref{C1}, one can deduce 
	\begin{align}\label{3.5*}
		\beta\langle\mathcal{C}_1(\z+\y_e)-\mathcal{C}_1(\y_e),\y_e\rangle\geq \frac{\beta}{2}\left(\||\z+\y_e|^{\frac{r-1}{2}}\z\|_{\H}^2+\||\y_e|^{\frac{r-1}{2}}\z\|_{\H}^2\right).
	\end{align}
	Combining \eqref{3.4*}-\eqref{3.5*}, we obtain from \eqref{appl6.1}
	\begin{align*}
		\frac{1}{2}\frac{\d}{\d t}\|\z(t)\|_{\H}^2+\left(\frac{\mu}{2}\A\z(t)+\alpha\z(t)+k\mathcal{P}(m\z(t)),\z(t)\right) \leq(\varrho^*+\varrho_1^*+\varrho_2^*)\|\z(t)\|_{\H}^2,
	\end{align*}
for a.e. $t\geq0.$	Now,  Proposition \ref{stbl3} and the above inequality imply
	\begin{align*}
		\frac{1}{2}\frac{\d}{\d t}\|\z(t)\|_{\H}^2+\left(\lambda_1^*(\wi{\mathrm{Q}})-\varepsilon\right) \|\z(t)\|_{\H}^2\leq(\varrho^*+\varrho_1^*+\varrho_2^*)\|\z(t)\|_{\H}^2
	\end{align*}
	or
	\begin{align}\label{3.6*}
		\frac{1}{2}\frac{\d}{\d t}\|\z(t)\|_{\H}^2\leq-\left(\lambda_1^*(\wi{\mathrm{Q}}) -\varepsilon-\varrho^*-\varrho_1^*-\varrho_2^*\right)\|\z(t)\|_{\H}^2,
	\end{align}
for a.e. $t\geq0$.	From Remark \ref{rk5}, we see that if $|\wi\Q|$ sufficiently small, then one can make the constant 
	\begin{align*}
		\delta:=\lambda_1^*(\wi{\mathrm{Q}}) -\varepsilon-\varrho^*-\varrho_1^*-\varrho_2^*>0. 
	\end{align*} 
 Thus from \eqref{3.6*}, it is immediate that 
	\begin{align*}
		\|\z(t)\|_{\H}^2\leq e^{-2\delta t}\|\z(0)\|_{\H}^2,
	\end{align*}
	for all $t\geq0.$ This proves that the feedback control $\u(\cdot)$ exponentially stabilizes the equilibrium solution  $\y_e$ of the system \eqref{prop1.1}.
\end{proof}

\end{appendix}

\medskip\noindent\textbf{Acknowledgments:} The first author would like to thank Ministry of Education, Government of India-MHRD for financial assistance. K. Kinra would like to thank the Council of Scientific $\&$ Industrial Research (CSIR), India for financial assistance (File No. 09/143(0938)/2019-EMR-I).

%\medskip\noindent	\textbf{Declarations:} 
%
%\noindent 	\textbf{Ethical Approval:}   Not applicable 
%
%%\noindent  \textbf{Competing interests: } The authors declare no competing interests. 
%
%\noindent  \textbf{Conflict of interest: }On behalf of all authors, the corresponding author states that there is no conflict of interest.
%
%\noindent 	\textbf{Authors' contributions: } All authors have contributed equally. 
%
%%\noindent 	\textbf{Funding: } DST, India, IFA17-MA110 (M. T. Mohan). 
%
%\noindent 	\textbf{Availability of data and materials: } Not applicable. 

\end{document}